\newtheorem*{TH1}{Theorem 1}
\newtheorem*{TH2}{Theorem 2}
\newtheorem{Theorem}{Theorem}[section]
\newtheorem{Lemma}{Lemma}[section]
\newtheorem{Corollary}{Corollary}[section]
\newtheorem{Problem}{Problem}[section]
\newtheorem{Claim}{Claim}[section]
\theoremstyle{definition}
\newtheorem{Remark}[Theorem]{Remark}
\newtheorem{Conjecture}{Conjecture}[section]
\newcommand{\mdim}[0]{\operatorname{mdim}}
\DeclareMathOperator{\widim}{widim}
\DeclareMathOperator{\diam}{diam}
\DeclareMathOperator{\supp}{supp}
\DeclareMathOperator{\ord}{ord}
\begin{document}

\title[The embedding problem in topological dynamics and Takens' theorem]
{The embedding problem in topological dynamics \\and Takens' theorem}

\author[Y. Gutman]{Yonatan Gutman}
\address{Y. Gutman: Institute of Mathematics, Polish Academy of Sciences,
ul. \'Sniadeckich 8, 00-656 Warszawa, Poland}
\email{y.gutman@impan.pl}

\author[Y. Qiao]{Yixiao Qiao}
\address{Y. Qiao: Institute of Mathematics, Polish Academy of Sciences,
ul. \'Sniadeckich 8, 00-656 Warszawa, Poland
 -- \and -- 
Department of Mathematics, University of Science and Technology of China,
Hefei, Anhui 230026, P.R. China}
\email{yxqiao@impan.pl}

\author[G. Szab\'o]{G\'abor Szab\'o}
\address{G. Szab\'o: Department of Mathematical Sciences, Copenhagen University, Universitetsparken 5, 2100 Copenhagen \O, Denmark}
\email{gabor.szabo@math.ku.dk}

\subjclass[2010]{37C45, 54H20}

\keywords{Cubical shifts, equivariant embedding, mean dimension, Rokhlin dimension, Takens' theorem,
actions of higher rank abelian groups $\mathbb{Z}^{k}$}

\date{\today}

\begin{abstract}
We prove that every $\mathbb{Z}^{k}$-action $(X,\mathbb{Z}^{k},T)$ of mean dimension less than $D/2$ admitting a factor $(Y,\mathbb{Z}^{k},S)$ of Rokhlin dimension not greater than $L$ embeds in
$(([0,1]^{(L+1)D})^{\mathbb{Z}^{k}}\times Y,\sigma\times S)$, where $D\in\mathbb{N}$, $L\in\mathbb{N}\cup\{0\}$ and $\sigma$ is the shift on the Hilbert cube $([0,1]^{(L+1)D})^{\mathbb{Z}^{k}}$; in particular, when $(Y,\mathbb{Z}^{k},S)$ is an irrational $\mathbb{Z}^{k}$-rotation on the $k$-torus, $(X,\mathbb{Z}^{k},T)$ embeds in
$(([0,1]^{2^kD+1})^{\mathbb{Z}^k},\sigma)$, which is compared to a previous result by the first named author, Lindenstrauss and Tsukamoto. Moreover, we give a complete and detailed proof of Takens' embedding theorem with a continuous observable for $\mathbb{Z}$-actions and deduce the analogous result for $\mathbb{Z}^{k}$-actions. Lastly, we show that the Lindenstrauss--Tsukamoto conjecture for $\mathbb{Z}$-actions holds generically, discuss
an analogous conjecture for $\mathbb{Z}^{k}$-actions in \cite{GutQiaTsu2017} by the first two authors and Tsukamoto and verify it for $\mathbb{Z}^{k}$-actions on finite dimensional spaces.
\end{abstract}
\maketitle

\section{Introduction}
One of the fundamental problems in the field of dynamical systems is that of finding good universal spaces or models. Given a family of dynamical systems $\mathcal{C}$ we would like to find a simple as possible system which exhibits all members of $\mathcal{C}$ as subsystems. This system is referred to as $\mathcal{C}$-universal.

In this article, we will be concerned with topological dynamical systems. The simplest example is given by a pair $(X,T)$,
where $X$ is a compact metric space and $T:X\to X$ is a homeomorphism, i.e., a continuous bijective mapping from $X$ to itself. However one may consider more general group actions $(X,G,\Phi)$, where $G$ is a topological group with identity element $e$, $X$ is a compact metric space and $\Phi: G\times X \to X$ is a continuous mapping satisfying that $\Phi(e,x)=x$ and $\Phi(h,\Phi(g,x))=\Phi(hg,x)$ for any $x\in X$ and $g,h\in G$. Usually, we abbreviate $(X,G,\Phi)$ and $\Phi(g,x)$ to $(X,G)$ and $gx$ respectively. Note that in the current article we consider continuous actions. This is stronger than assuming that the action is measurable but weaker than assuming that it is smooth. The case of $(X,T)$ thus
corresponds to a $\mathbb{Z}$-action $(X,\mathbb{Z})$ and other especially interesting cases involve $G=\mathbb{R}$ or $G=\mathbb{Z}^{k}$ with $k\geq2$.

The universal systems we will consider are the $d$-cubical shifts $\mathcal{S}_{d}$ on the Hilbert cube $([0,1]^{d})^{\mathbb{Z}^{k}}$, where $d$ is a positive integer. The phase space is $([0,1]^{d})^{\mathbb{Z}^{k}}$ and the action $\sigma$ is given by shifting $(x_{n})_{n\in\mathbb{Z}^{k}}\in ([0,1]^{d})^{\mathbb{Z}^{k}}$ to $(x_{n+m})_{n\in\mathbb{Z}^{k}}\in ([0,1]^{d})^{\mathbb{Z}^{k}}$ for every $m\in\mathbb{Z}^{k}$. Arguably these systems are concrete and simple. A dynamical system $(X,\mathbb{Z}^{k})$ is isomorphic to a subsystem of $\mathcal{S}_{d}$ if and only if there exists an embedding from $(X,\mathbb{Z}^{k})$ into $\mathcal{S}_{d}$. By an \textbf{embedding} from $(X,\mathbb{Z}^{k})$ into $\mathcal{S}_{d}$, denoted by $(X,\mathbb{Z}^{k})\hookrightarrow\mathcal{S}_{d}$, we mean a continuous injective mapping $f:X\to([0,1]^{d})^{\mathbb{Z}^{k}}$ with $f\circ\Phi(n,x)=\sigma(n,f(x))$ for all $n\in\mathbb{Z}^{k}$ and $x\in X$. We are thus led to the following fundamental problem.

\begin{Problem}\label{problem}
Let $k,d$ be positive integers and $(X,\mathbb{Z}^{k})$ a dynamical system.
Does it embed into the $d$-cubical shift $\mathcal{S}_{d}$?
\end{Problem}

This problem has a long and fascinating history which we will detail below. However let us first relate this problem to Takens' theorem. The celebrated Takens theorem gives sufficient conditions under which the dynamics of a system can be reconstructed from time series of observable quantities. In many cases it lets one reconstruct the internal dynamics of a complicated nonlinear system from a single time series. The framework of Takens' theorem may be described in the following way: Given a system $X$ and an evolution rule $T$, one seeks an observable $h:X\to[0,1]$ so that the mapping $X\to[0,1]^{\ell+1},x\mapsto(h(x),h(Tx),\dots,h(T^{\ell}x))$ is one-to-one for some $\ell\geq1$.
This enables the experimentalist possessing time series $h(x_{0}),h(Tx_{0}),h(T^{2}x_{0}),\dots$ (for some points $x_{0}\in X$) to plot the following points
$$\left(h(x_{0}),h(Tx_{0}),\dots,h(T^{\ell}x_{0})\right),$$
$$\left(h(Tx_{0}),h(T^{2}x_{0}),\dots,h(T^{\ell+1}x_{0}\right),$$
$$\vdots$$
$$\left(h(T^{M-\ell}x_{0}),h(T^{M-\ell+1}x_{0}),\dots,h(T^{M}x_{0})\right),$$
$$\vdots$$
in $[0,1]^{\ell+1}$ and thus to obtain an approximation of the system as well as its dynamics. Takens \cite[Theorem 1]{T81} proved a mathematical theorem which made this approximation procedure credible. In his setting, the phase space $X$ was assumed to be a manifold, and the rule $T$ and the observable $h$ were assumed to be $C^{2}$ maps. It enables
experimentalists to construct models for complex and non-linear systems using a single observable. The applicability to non-linear systems is paramount as many other techniques in the literature are of limited use. It is thus no surprise that Takens' theorem has been used widely in experimental sciences, in particular, in physics and biology\cite{kostelich1990noise,hsieh2005distinguishing, sugihara1990nonlinear}.

Let us now relate Takens'  theorem and Problem \ref{problem}. If we assume that the observable $h:X\to[0,1]$ is continuous, then a system for which Takens' theorem holds may be equivariantly embedded into the $1$-cubical shift $\mathcal{S}_{1}$ via the mapping $X\to[0,1]^{\mathbb{Z}},x\mapsto(h(T^{i}x))_{i\in\mathbb{Z}}$ \footnote{Indeed, if $X\to[0,1]^{\ell+1},x
\mapsto(h(x),h(Tx),\dots,h(T^{\ell}x))$ is already one-to-one, a fortiori $X\to[0,1]^{\mathbb{Z}},x\mapsto(h(T^{i}x))_{i\in\mathbb{Z}}$ is one-to-one.}. The first named author \cite{Gut16} generalized Takens' theorem to the setting of a
$\mathbb{Z}$-action $(X,T)$ and a continuous observable $h$ below, showing that for a generic continuous function
$h:X\to[0,1]$ the mapping $X\to[0,1]^{2d+1},x\mapsto (h(x),h(Tx),\dots,h(T^{2d}x))$ is an embedding, where $X$ has Lebesgue covering dimension $d$ (see \eqref{lebesgue dimension} in Section 2 for the definition).

\begin{TH1}[Cf. {\cite[Theorem 1.1]{Gut16}}]
Let $d\in\mathbb{N}\cup\{0\}$ and $m\in\mathbb{N}$. Let $X$ be a compact metric space and $T:X\rightarrow X$ a homeomorphism. Assume that $\dim(X)=d$ and $\dim(P_{n})<mn/2$ for all $1\leq n\leq2d$, where $P_{n}$ denotes the set of periodic points of period $\leq n$. Then it is a generic property that the following map
\begin{equation}\label{eq:delay observation map-11}
h_{0}^{2d}:X\to([0,1]^{m})^{2d+1},\;\;x\mapsto\left(h(x),h(Tx),\dots,h(T^{2d}x)\right)
\end{equation}
is an embedding, i.e., the set of functions in $C(X,[0,1]^{m})$ for which \eqref{eq:delay observation map-11} is an embedding is comeagre w.r.t. supremum topology.
\end{TH1}

This also generalized in certain aspects versions of Takens' theorem proven by Sauer, Yorke and Casdagli \cite{SYC91}, and Robinson \cite{robinson2001rigorous,Rob05,Rob11}, where $X$ was assumed to be a compact subspace of Euclidean space, respectively Hilbert space,
with bounded box dimension. Indeed, there are spaces with bounded Lebesgue covering dimension and infinite box dimension. However the proof given in \cite{Gut16} was not complete. Our first goal is to provide a complete and detailed proof for Theorem 1.

\medskip

Let us now review the history of Problem \ref{problem}.  According to a classical theorem due to Bebutov and Kakutani \cite{K68} (see also \cite[Chapter 13]{A}), a real flow whose fixed point set is homeomorphic to a subset of $\mathbb{R}$ embeds into the space of all continuous functions on $\mathbb{R}$, with the natural action of $\mathbb{R}$. For an explicit compact universal space for all compact real flows we refer the reader to \cite{GutJin16}. Auslander \cite[p.193]{A} asked in the early 70's whether Problem \ref{problem} has a solution in the case $k=d=1$ for minimal systems \footnote{A system $(X, T)$ is said to be minimal if the orbit $\{T^{n}x: n\in \mathbb{Z}\}$ is dense in $X$ for every $x\in X$.}. It is obvious that if the set of periodic points of period $n$ of $(X,T)$ cannot be embedded into $[0,1]^{n}$ for some $n$, then $(X,T)$ cannot be embedded into $([0,1]^{\mathbb{Z}},\sigma)$. This is a reason why Auslander restricted Problem \ref{problem} to the setting of minimal systems. In 1974, Jaworski \cite{J74} answered Problem \ref{problem} positively for finite dimensional aperiodic
\footnote{A system $(X,T)$ is called aperiodic if $T^{n}x\neq x$ for all $x\in X$ and nonzero integer $n$.} systems in the case $k=d=1$. In 1991, Nerurkar \cite{nerurkar1991} generalized Jaworski's result to the case that $X$ is finite dimensional and $T$ does not have infinitely many periodic points with same period. In 2000, Lindenstrauss and Weiss \cite{LW} solved Auslander's question in the negative by using the theory of mean dimension. Mean dimension is an invariant of topological dynamical systems introduced by Gromov \cite{G} in 1999. Heuristically, \textit{it counts the number of real-valued parameters per unit time, just like topological entropy counts the number of bits per unit time needed for describing a system.} The mean dimension of $(X,\mathbb{Z}^{k})$ is denoted by $\mdim(X)$, see Section 3 for the exact definition. Not surprisingly, if the topological entropy of a system is finite, then its mean dimension is zero \cite[Section 4]{LW}. The usefulness of this invariant presents itself by considering the mean dimension of the $d$-cubical shift $\mathcal{S}_{d}$. The $d$-cubical shift is obviously infinite dimensional and of infinite topological entropy; however, its mean dimension is $d$. As both finite dimensionality and finite topological entropy imply zero mean dimension, we see that mean dimension provides nontrivial information for ``large'' systems. Lindenstrauss and Weiss developed the fundamental theory of mean dimension and applied it to several problems in topological dynamics, such as the embedding problem (as we mentioned before) and characterization of small boundary property \cite{LW,L99}.
As every system embedding into $\mathcal{S}_{d}$ has mean dimension no more than $d$, mean dimension becomes another obstruction of embedding into $d$-cubical shifts.  A construction of infinite minimal dynamical system whose mean dimension is strictly greater than $1$ was given in \cite[Proposition 3.5]{LW}; it follows that this system cannot be embedded into $(([0,1])^{\mathbb{Z}},\sigma)$, i.e., Problem \ref{problem} with $k=d=1$ has a negative answer for such a system.

In a sequel to \cite{LW}, Lindenstrauss \cite[Theorem 5.1]{L99} gave a partial converse to the necessary condition $\mdim(X)\leq d$: If $(X,T)$ is an extension of an aperiodic minimal system with $\mdim(X)<m/36$, then one can embed $(X,T)$ into $(([0,1]^{m})^\mathbb{Z},\sigma)$. In particular, for any minimal system of mean dimension strictly less than $m/36$, Problem \ref{problem} has a positive answer in the case $k=1$ and $d=m$. Another nice question related to this marvellous result, which was posed by Lindenstrauss in \cite[p.229]{L99}, is to decide the largest constant $c$ such that $\mdim(X)<cm$ implies that $(X,T)$ embeds into $(([0,1]^{m})^\mathbb{Z},\sigma)$. Recently, the first named author and Tsukamoto \cite[Theorem 1.4]{GutTsu16} proved that every minimal system $(X,T)$ of mean dimension strictly less than $m/2$ embeds into the $m$-cubical shift $\mathcal{S}_{m}$. Previously, Lindenstrauss and Tsukamoto \cite[Theorem 1.3]{Lt12} constructed a minimal system of mean dimension $m/2$ which cannot be embedded into the $m$-cubical shift $\mathcal{S}_{m}$. Combining these two results together, we get that the constant $c=1/2$ is optimal. For general $\mathbb{Z}$-actions with periodic points, Lindenstrauss and Tsukamoto conjectured that
\begin{Conjecture}[{\cite[Conjecture 1.2]{Lt12}}]\label{conjectureLinTsu}
Let $(X,T)$ be a dynamical system. If
\[
\mdim(X)<\frac{m}{2},\;\;\;\;
\frac{\dim\left(\{x:T^{n}x=x\}\right)}{n}<\frac{m}{2}\;\;\text{for all}\;\;n\ge1,
\]
then there is an embedding from $(X,T)$ into $(([0,1]^{m})^{\mathbb{Z}},\sigma)$.
\end{Conjecture}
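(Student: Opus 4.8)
The plan is to prove the natural \emph{generic} form of the conjecture, parallel to Theorem 1: under the two standing hypotheses on $(X,T)$, the set of $f\in C(X,[0,1]^{m})$ for which the delay map
\[
\Phi_{f}\colon X\to([0,1]^{m})^{\mathbb{Z}},\qquad \Phi_{f}(x)=\big(f(T^{n}x)\big)_{n\in\mathbb{Z}},
\]
is injective is comeagre in $C(X,[0,1]^{m})$ with the supremum metric; since $\Phi_{f}$ is automatically continuous and equivariant and $X$ is compact, injectivity is the same as being an equivariant embedding into $(([0,1]^{m})^{\mathbb{Z}},\sigma)$, and every equivariant continuous map into the $m$-cubical shift has this form, so in particular an embedding exists. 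As $C(X,[0,1]^{m})$ is completely metrizable, hence Baire, it suffices to show that for every $\varepsilon>0$ the set $E_{\varepsilon}=\{f:\Phi_{f}(x)=\Phi_{f}(x')\Rightarrow d(x,x')\le\varepsilon\}$ is open and dense; openness follows from compactness of $X$ (the ``$\varepsilon$-separated pair'' set is compact and $(x,x')\mapsto\operatorname{dist}(\Phi_{f}(x),\Phi_{f}(x'))$ is continuous), and $\bigcap_{j}E_{1/j}$ is exactly the set of $f$ with $\Phi_{f}$ injective.

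Density of $E_{\varepsilon}$ is where the hypotheses enter, and I would handle the periodic and aperiodic regions separately. Near the set $P_{n}=\{x:T^{n}x=x\}$ the value $\Phi_{f}(x)$ is already determined by the finite block $(f(x),f(Tx),\dots,f(T^{n-1}x))\in([0,1]^{m})^{n}\cong[0,1]^{mn}$, and $\dim(P_{n})<mn/2$ is exactly the Menger--N\"obeling general-position threshold guaranteeing that an arbitrarily small perturbation of $f$, supported in a small neighbourhood of $P_{n}$, makes this block map injective on $P_{n}$ --- this is the finite-dimensional mechanism used in the proof of Theorem 1 (\cite[Theorem 1.1]{Gut16}). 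Since the conditions $\dim(P_{n})<mn/2$ hold for \emph{all} $n\ge1$ but there are countably many levels, one runs this as a countable induction: at stage $n$ one perturbs only near $P_{n}\setminus\bigcup_{k<n}P_{k}$ by an amount summable in $n$, so the accumulated change stays inside the prescribed ball and no injectivity gained at an earlier level is destroyed.

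On the aperiodic part I would invoke the mean-dimension machinery of Gutman and Tsukamoto \cite{GutTsu16}: using a Rokhlin-tower / marker cover adapted to the aperiodic region, separating two orbits that never return close reduces to making a $[0,1]^{mN}$-valued map injective on an at-most-$q$-dimensional simplicial model of a tower of height $N$, and the estimate $\widim_{\varepsilon}\ll mN/2$ provided by $\mdim(X)<m/2$ forces $2q+1\le mN$ for $N$ large, so a generic perturbation of $f$ along the tower is injective there; the marker structure lets these local perturbations be glued into one global, arbitrarily small modification of $f$. The genuine obstacle --- and the technical heart of the matter --- is the \emph{coherent assembly}: arranging the countably many periodic-level perturbations and the aperiodic tower perturbations to be simultaneously compatible on their overlaps (in particular, making the tower-based perturbation near the boundary between $\bigcup_{n}P_{n}$ and the aperiodic part respect the general-position data already frozen), and carrying the $\widim_{\varepsilon}$ estimates through the Baire argument while keeping the single fixed target dimension $m$ rather than one growing with $1/\varepsilon$. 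This is precisely the point where the conjecture is hard in full generality, and pushing the Gutman--Tsukamoto perturbation scheme to cooperate with the periodic-point stratification is the step I expect to be the main difficulty.
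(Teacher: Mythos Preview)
The statement is a \emph{conjecture}, and the paper does not prove it; it says explicitly that ``it is still open in general''. What the paper does prove (Appendix A) is that the conjecture ``holds generically'', but in a sense entirely different from yours: the paper works in the space $S$ of all subsystems of $(Q^{\mathbb{Z}},\sigma)$ with the Hausdorff metric, invokes Hochman's theorem that the Kechris--Rosendal system is generic in $S$, observes that this system is aperiodic and zero-dimensional, and then applies Jaworski's classical theorem to embed it into $([0,1]^{\mathbb{Z}},\sigma)$. So ``generically'' refers to a comeagre set of \emph{systems}, not of observables, and the argument bypasses mean dimension and the periodic-point stratification altogether.

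Your proposal, by contrast, fixes $(X,T)$ satisfying the hypotheses and tries to show that a comeagre set of $f\in C(X,[0,1]^m)$ gives an embedding. If this succeeded it would prove the full conjecture, which is strictly more than the paper claims. The periodic part of your outline is essentially the content of Theorem~1 (for finitely many levels $n\le 2d$), but for an infinite-dimensional $X$ there are infinitely many periodicity levels and no uniform $d$ to stop at, so the ``countable induction'' you sketch already has to be handled with care. The aperiodic part relies on marker/Rokhlin-tower structure, which is not known to exist for an arbitrary aperiodic system --- this is related to the marker property, itself a nontrivial hypothesis. And the step you flag yourself, the coherent assembly of the periodic-level perturbations with the tower perturbations across their interface, is precisely the unresolved difficulty; you have correctly located the gap, but it is a genuine gap, not a routine technicality, and your outline does not close it.
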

This conjecture holds generically, see Appendix A. But it is still open in general. Note however that Theorem 1 implies that this conjecture holds for finite dimensional systems. In addition, Amyot \cite[Proposition 26]{amy2014} gave sufficient conditions for embedding of countable products of finite dimensional systems into cubical shifts .

\medskip

It was pointed out in \cite[Introduction]{GutLinTsu15}: ``The original motivation of Gromov was to apply the ideas of mean dimension to infinite dimensional dynamical systems in geometric analysis. In most situations in geometry the acting groups are more complicated than $\mathbb{Z}$. For example, when one studies a dynamical system consisting of holomorphic curves $f:\mathbb{C}\to\mathbb{C}P^{N}$ (see \cite{T2011}), the acting groups are $\mathbb{C}$ and its lattice $\mathbb{Z}^{2}$. In \cite[Chapter 4]{G} Gromov discussed a system of complex subvarieties in $\mathbb{C}^{n}$. In that case, $\mathbb{C}^{n}$ and its lattice $\mathbb{Z}^{2n}$ are the acting groups, the action being by translation. So geometry naturally requires us to extend the theory of mean dimension from $\mathbb{Z}$ to more general groups, specifically $\mathbb{Z}^{k}$." It therefore makes sense to study the relation between mean dimension and the embedding problem for $\mathbb{Z}^{k}$-actions. Nonetheless, Lindenstrauss already remarked that the obstruction to extending his results in \cite{L99} for $\mathbb{Z}$-actions to the setting of $\mathbb{Z}^{k}$-actions is not ``purely technical''. Indeed, the embedding problem for $\mathbb{Z}^{k}$-actions $(k\geq 2)$ has proven itself to be more difficult than the $\mathbb{Z}$ case. There are essentially two known results, both appearing in \cite{GutLinTsu15}:
With a relatively easy proof modelled on \cite{GutTsu12} it was shown that $(X,\mathbb{{Z}}^{k})$ of $\mdim(X)<m/2$ admitting an aperiodic symbolic factor embeds into the $m$-cubical shift $\mathcal{S}_{m}$ \cite[Theorem 1.6]{GutLinTsu15}; and with a hard and very technical proof it was proven that if $(X,\mathbb{{Z}}^{k})$ is an extension
of an infinite minimal system and satisfies $\mdim(X)<{m}/{2^{k+1}}$, then there exists an embedding from $(X,\mathbb{{Z}}^{k})$ into $(([0,1]^{2m})^{\mathbb{Z}^{k}},\sigma)$ \cite[Theorem 1.5]{GutLinTsu15}.
It turns out that the most substantial progress for the embedding problem for $\mathbb{{Z}}^{k}$-actions involves very hard proofs and it treats only free \footnote{A $\mathbb{Z}^{k}$-action $(X,\mathbb{Z}^{k})$ is called free, if for all $x\in X$, $nx=x$ implies $n=0$.} systems.
In order to tackle the even harder case involving quasi-periodic \footnote{A point $x$ of $\mathbb{Z}^{k}$-action $(X,\mathbb{Z}^{k})$ is called quasi-periodic, if there exists $n\neq 0$ such that $nx=x$.} points it seems advisable to start by finding simpler proofs in the aperiodic case.
A promising direction involves Rokhlin dimension for $\mathbb{{Z}}^{k}$-actions, a notion which arose in the context of the classification of transformation group $C^*$-algebras \cite{TomsWinter13, HWZ12, Sza15, WinterClassifying, SzaboWuZacharias}, i.e., $C^*$-algebras associated to topological dynamical systems via the crossed product construction.
The topological definition is due to Winter and appears first in \cite[Definition 2.1]{Sza15} relying on\cite{HWZ12}: A $\mathbb{Z}^{k}$-action $(X,\mathbb{Z}^{k})$ is said to have (topological) \textbf{Rokhlin dimension} $d$,
written $\dim_{Rok}(X,\mathbb{Z}^{k})=d$, if $d$ is the smallest nonnegative integer such that for each $n\in\mathbb{{N}}$, one can find $d+1$ open sets $U_{0},\dots,U_{d}$ satisfying the following properties:

\begin{enumerate}
\item for each $0\leq i\leq d$, $U_{i}$ induces an $[n]$-tower, where $[n]=\{0,1,\dots,n-1\}^{k}\subset\mathbb{Z}^{k}$;
namely, $\{g\overline{U}_{i}\}_{g\in[n]}$ are pairwise disjoint (see Figure \ref{Rokhlin tower} below);

\item the union of the $d+1$ $[n]$-towers covers the whole space $X$: $\bigcup_{i=0}^{d}\bigcup_{g\in[n]}gU_{i}=X$.
\end{enumerate}
If there is no such $d$, then we say that $(X,\mathbb{Z}^{k})$ has infinite Rokhlin dimension and
write $\dim_{Rok}(X,\mathbb{Z}^{k})=\infty$.
\begin{figure}[H]
    \centering
    \includegraphics[width=4.0in]{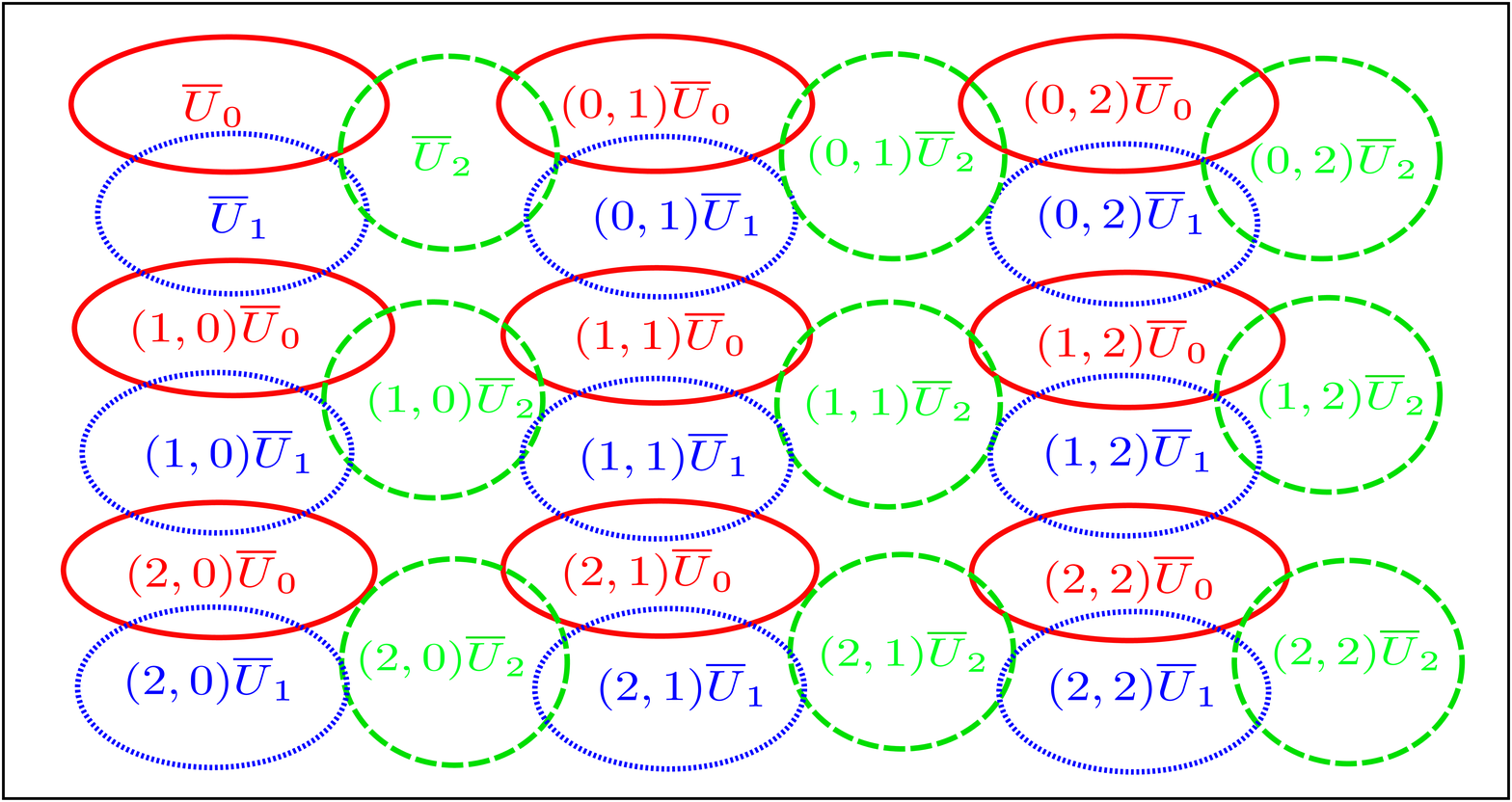}
    \caption{[3]-towers for a $\mathbb{Z}^{2}$-action of Rokhlin dimension 2.}
   \label{Rokhlin tower}
\end{figure}
\begin{Remark}
If a system is not free, then its Rokhlin dimension is $\infty$.
\end{Remark}

This definition is attractive as one can try to solve the embedding problem tower by tower similarly to what has been done in \cite{GutTsu12} and \cite[Section 1.6]{GutLinTsu15}. In addition, since the towers are allowed to overlap, the definition allows for connected systems, unlike the case of \cite{GutTsu12} and \cite[Section 1.6]{GutLinTsu15} where the system must have a zero dimensional factor which implies strong unconnectedness. In this article, we obtain the following result on the embedding problem for $\mathbb{Z}^{k}$-actions with a simple and conceptually appealing proof:

\begin{TH2}\label{embedding general}
Let $D\in\mathbb{N}\cup\{0\}$ and $L\in\mathbb{N}$. Let $(X,\mathbb{Z}^{k},T)$ be an extension of $(Y,\mathbb{Z}^{k},S)$ with the factor map $\pi:X\to Y$. Assume that $\dim_{Rok}(Y,\mathbb{Z}^{k})=D$ and $\mdim(X)<L/2$. Then the set of functions
\[
\left\{f\in C(X,[0,1]^{(D+1)L}): I_f\times\pi~\text{is an embedding}\right\}
\]
is a dense $G_\delta$ subset of $C(X,[0,1]^{(D+1)L})$. In particular, there exists an embedding from $(X,\mathbb{Z}^{k},T)$ into $\bigl(([0,1]^{(D+1)L})^{\mathbb{Z}^k}\times Y,\sigma\times S\bigl)$.
\end{TH2}

As we mentioned previously, in \cite[Theorem 1.5]{GutLinTsu15} the authors proved that if $(X,\mathbb{{Z}}^{k})$ is an extension of an infinite minimal system and satisfies $\mdim(X)<{D}/{2^{k+1}}$ then there exists an embedding from $(X,\mathbb{{Z}}^{k})$ into $(([0,1]^{2D})^{\mathbb{Z}^{k}},\sigma)$. Comparing this result to Corollary \ref{irrr} below,
we see that our result improves upon this by a factor of $2$ for systems admitting minimal irrational rotations as factors.

\begin{Corollary}\label{irrr}
Let $(X,\mathbb{Z}^{k})$ be an extension of an irrational $\mathbb{Z}^{k}$-rotation on the $k$-torus
\footnote{An irrational $\mathbb{Z}^{k}$-rotation on the $k$-torus $\mathbb{T}^{k}=(\mathbb{R}/\mathbb{Z})^k$ is given by
$(n_{1},\dots,n_{k})\times(x_{1},\dots, x_{k})\mapsto(x_{1}+n_{1}\alpha_{1},\dots, x_{k}+n_{k}\alpha_{k})$,
where $(n_{1},\dots,n_{k})\in\mathbb{Z}^{k}$, $(x_{1},\dots, x_{k})\in\mathbb{T}^{k}$, $\alpha_{1},\dots,\alpha_{k}$ are irrational numbers but are not necessarily linearly independent over the rationals.}
with $\mdim(X)<L/2$. Then there exists an embedding from
$(X,\mathbb{Z}^{k})$ into $\bigl(([0,1]^{2^kL+1})^{\mathbb{Z}^{k}},\sigma\bigl)$.
\end{Corollary}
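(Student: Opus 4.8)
To prove Corollary~\ref{irrr}, the plan is to invoke Theorem~2 with $Y$ the $k$-torus carrying the prescribed irrational $\mathbb{Z}^k$-rotation $S$, and then to absorb the torus factor into one extra copy of the circle shift. Note first that, since the $i$-th standard generator of $\mathbb{Z}^k$ acts on $\mathbb{T}^k=(\mathbb{R}/\mathbb{Z})^k$ by $x_i\mapsto x_i+\alpha_i$ and trivially on the other coordinates, $(\mathbb{T}^k,\mathbb{Z}^k,S)$ is the product of the $k$ irrational circle rotations; in particular it is free and minimal, and the orbit $\{(n_1\alpha_1,\dots,n_k\alpha_k):n\in\mathbb{Z}^k\}$ is dense in $\mathbb{T}^k$ regardless of whether the $\alpha_i$ are rationally independent.

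The point to settle is the estimate $\dim_{Rok}(\mathbb{T}^k,\mathbb{Z}^k)\le 2^k-1$. For a single irrational rotation one has $\dim_{Rok}(\mathbb{T},\mathbb{Z})=1$: the inequality $\dim_{Rok}\ge1$ holds because a single $[n]$-tower with $n\ge2$ would exhibit the connected space $\mathbb{T}$ as a disjoint union of $n$ nonempty closed sets, while the matching upper bound is the classical Kakutani--Rokhlin construction for irrational rotations (relying on the three-gap theorem; this is well known, cf.\ the references on Rokhlin dimension cited in the introduction). For the torus one then uses the elementary product bound: if $\{U^{(s)}_i\}_i$ realizes Rokhlin dimension $d_s$ at a given scale for the $s$-th factor, the products $U^{(1)}_{i_1}\times\dots\times U^{(k)}_{i_k}$ realize dimension $\le\prod_s(d_s+1)-1$ for the product action, since a product of pairwise disjoint families is pairwise disjoint and a product of coverings is a covering. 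Hence $\dim_{Rok}(\mathbb{T}^k,\mathbb{Z}^k)+1\le 2^k$. Although Theorem~2 is stated with $\dim_{Rok}(Y)=D$, its conclusion clearly persists whenever $\dim_{Rok}(Y)\le D$ (pad a shorter list of towers with empty open sets), so one may take $D=2^k-1$; in fact equality holds, but this is not needed.

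Applying Theorem~2 to the factor map $\pi\colon X\to\mathbb{T}^k$ with $D=2^k-1$ and the hypothesis $\mdim(X)<L/2$ yields an embedding
\[
(X,\mathbb{Z}^k,T)\hookrightarrow\bigl(([0,1]^{2^kL})^{\mathbb{Z}^k}\times\mathbb{T}^k,\ \sigma\times S\bigr).
\]
It remains to embed the factor $(\mathbb{T}^k,\mathbb{Z}^k,S)$ into the $1$-cubical shift $(([0,1])^{\mathbb{Z}^k},\sigma)$. For this I would fix $h\in C(\mathbb{T}^k,[0,1])$ with trivial translation stabilizer, for instance $h(x)=c\sum_{i=1}^k 2^{-i}(1+\cos 2\pi x_i)$ with $c=(2(1-2^{-k}))^{-1}$: if $h(\cdot+\phi)=h$, then freezing all coordinates but the $i$-th forces the $i$-th summand to be independent of $x_i$, hence $\cos 2\pi\phi_i=1$ and $\sin 2\pi\phi_i=0$, so $\phi_i=0$. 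The associated equivariant continuous map $I_h\colon\mathbb{T}^k\to([0,1])^{\mathbb{Z}^k}$, $x\mapsto(h(S_nx))_{n\in\mathbb{Z}^k}$, is then injective, since $h(S_nx)=h(S_nx')$ for all $n$ implies (with $\phi=x'-x$) that $h=h(\cdot+\phi)$ on the dense orbit of $x$, hence everywhere by continuity, so $\phi=0$; being a continuous injection of a compact space, $I_h$ is an embedding. Composing this with the embedding above and using the evident identification $\bigl(([0,1]^a)^{\mathbb{Z}^k},\sigma\bigr)\times\bigl(([0,1]^b)^{\mathbb{Z}^k},\sigma\bigr)\cong\bigl(([0,1]^{a+b})^{\mathbb{Z}^k},\sigma\bigr)$ with $a=2^kL$ and $b=1$ produces the desired embedding $(X,\mathbb{Z}^k,T)\hookrightarrow\bigl(([0,1]^{2^kL+1})^{\mathbb{Z}^k},\sigma\bigr)$.

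The only genuinely substantive ingredient is the bound $\dim_{Rok}(\mathbb{T}^k,\mathbb{Z}^k)\le 2^k-1$, and within it the circle case $\dim_{Rok}(\mathbb{T},\mathbb{Z})=1$, whose nontrivial direction is the three-gap/Kakutani--Rokhlin covering argument; once that is granted, the product reduction and the two embedding steps are routine bookkeeping, and no idea beyond Theorem~2 and these standard facts should be required.
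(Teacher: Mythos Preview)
Your proof is correct and follows essentially the same approach as the paper: bound the Rokhlin dimension of the irrational $\mathbb{Z}^k$-rotation on $\mathbb{T}^k$ by $2^k-1$ via the product of the $k$ one-dimensional irrational circle rotations (each of Rokhlin dimension $1$), apply Theorem~2, and then embed the torus factor into the $1$-cubical shift. The only minor difference is in this last step: the paper invokes Jaworski's theorem (a finite-dimensional free $\mathbb{Z}^k$-system embeds in $([0,1]^{\mathbb{Z}^k},\sigma)$), while you construct an explicit observable $h$ with trivial translation stabilizer and use minimality---both arguments are valid and yield the same conclusion.
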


This paper is organized as follows. In Section 2, we provide a complete and detailed proof for Takens' embedding theorem with a continuous observable for $\mathbb{Z}$-actions (Theorem 1). In Section 3, we establish a new condition implying embeddability into $d$-cubical shifts $\mathcal{S}_{d}$ for systems admitting factors of bounded Rokhlin dimension (Theorem 2). In Section 4, we state an analogy of Conjecture \ref{conjectureLinTsu} for $\mathbb{Z}^{k}$-actions and
verify its correctness for finite dimensional $\mathbb{Z}^{k}$-actions by generalizing Theorem 1 to the setting of $\mathbb{Z}^{k}$-actions. In Appendix A, we prove that Conjecture \ref{conjectureLinTsu} holds generically.

\vspace{0.4cm}

\textbf{Acknowledgements.}
Y. Gutman was partially supported by the Marie Curie grant PCIG12-GA-2012-334564 and
the National Science Center (Poland) grants 2013/08/A/ST1/00275 and 2016/22/E/ST1/00448.
Y. Qiao was partially supported by NSF of China (grant numbers 11371339 and 11571335).
G. Szab{\'o} was supported by SFB 878 \emph{Groups, Geometry and Actions}, EPSRC grant EP/N00874X/1
and the Danish National Research Foundation through the Centre for Symmetry and Deformation (DNRF92).
A visit of G. Szab{\'o} to IMPAN was partially financed by WCMCS (Poland). The authors would like to thank the referee for his/her careful reading and helpful suggestions.

\section{Takens' embedding theorem with a continuous observable \\
for $\mathbb{Z}$-actions}
We begin with necessary notions and basic results. For a compact metric space $X$, we denote by $\mathcal{C}$ the collection of all finite open covers of $X$. Given $\alpha\in\mathcal{C}$ and $x\in X$, we can count the number of elements in $\alpha$ to which $x$ belongs, i.e.,
$$\#\{U:x\in U\in\alpha\}=\sum_{U\in\alpha}1_{U}(x).$$
The order of $\alpha$, denoted by $\ord(\alpha)$, is essentially defined by maximizing this quantity:
$$\ord(\alpha)=-1+\max_{x\in X}\sum_{U\in \alpha}1_{U}(x).$$
Let $D(\alpha)=\min_{\beta\succ\alpha}\ord(\beta)$, where $\beta\succ\alpha$ means that for every $V\in\beta$ there is $U\in\alpha$ with $V\subset U$. The \textbf{Lebesgue covering dimension} is defined as

\begin{equation*}
\dim(X)=\sup_{\alpha\in\mathcal{C}}D(\alpha).
\end{equation*}
Define the \textbf{mesh} of a finite open cover $\alpha$ of $X$ by $\text{mesh}(\alpha)=\max_{U\in\alpha}\diam(U)$. It is not hard to show that for any $\epsilon>0$,
\begin{equation}\label{lebesgue dimension}
\dim(X)=\sup_{\alpha\in\mathcal{C},\text{mesh}(\alpha)<\epsilon}D(\alpha).
\end{equation}

\medskip

The main tool of the proofs in this paper is the Baire category theorem. A \textbf{Baire space} is a topological space where the intersection of countably many dense open sets is dense.  Note that $(C(X,[0,1]^{m}),\|\cdot\|_{\infty})$ is a complete metric space. By the Baire category theorem \cite[Theorem 8.4]{kechris2012classical}, $(C(X,[0,1]^{m}),\|\cdot\|_{\infty})$ is a Baire space. A set in a topological space is said to be \textbf{comeagre} or \textbf{generic} if it is the complement of a countable union of nowhere dense sets. A set is called $\mathbf{G_{\delta}}$ if it is a countable intersection of open sets. Note that a dense $G_{\delta}$ set is comeagre.

Let $(X,T)$ be a topological dynamical system. For every $n\geq 1$, define
$$P_{n}=\big\{x\in X: \; T^{i}x=x \;\text{ for some }\; 1\leq i\leq n \big\}$$
the set of all periodic points of period less than or equal to $n$, and
$$H_{n}=P_{n}\setminus P_{n-1}\;\;\;\;(P_{0}=\emptyset)$$
the set of all periodic points of period $n$, and
$$P=\cup_{n\geq1} H_{n}$$
the set of all periodic points. For $h\in C(X,[0,1]^{m})$ and $d\in\mathbb{N}\cup\{0\}$, we define
$$h_{0}^{2d}:X\to([0,1]^{m})^{2d+1},\;\;x\mapsto\left(h(x),h(Tx),\dots,h(T^{2d}x)\right).$$
Let $K$ be a compact subset of $(X\times X)\setminus\triangle$, where $\triangle=\{(x,x):x\in X\}$ is the diagonal of $X\times X$. Set
$$D_{K}=\left\{h\in C(X,[0,1]^{m}):h_{0}^{2d}(x)\neq h_{0}^{2d}(y), (x,y)\in K\right\}.$$

\begin{Theorem}[=Theorem 1]\label{takens embedding for Z}
Let $d\in\mathbb{N}\cup\{0\}$ and $m\in\mathbb{N}$.
Let $X$ be a compact metric space and $T:X\rightarrow X$ a homeomorphism.
Assume that $\dim(X)=d$ and $\dim(P_{n})<mn/2$
for all $1\leq n\leq2d$.
Then it is a generic property that the following map
\begin{equation}\label{eq:delay observation map-1}
h_{0}^{2d}:X\to([0,1]^{m})^{2d+1},\;\;x\mapsto\left(h(x),h(Tx),\dots,h(T^{2d}x)\right)
\end{equation}
is an embedding, i.e., the set of functions in $C(X,[0,1]^{m})$
for which \eqref{eq:delay observation map-1} is an embedding is comeagre w.r.t. supremum topology.
\end{Theorem}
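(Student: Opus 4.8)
The plan is to invoke the Baire category theorem.

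\emph{Reduction.} Since $X$ is compact metrizable, a continuous injection of $X$ into a Hausdorff space is automatically a homeomorphism onto its image, so it suffices to show that $\mathcal{G}:=\{h\in C(X,[0,1]^m):h_0^{2d}\text{ is injective}\}$ is comeagre. Write the open set $(X\times X)\setminus\triangle$ as the union of the compact sets $K_j:=\{(x,y):\dist(x,y)\ge 1/j\}$; then $\mathcal{G}=\bigcap_{j\ge 1}D_{K_j}$. Each $D_K$ (with $K$ compact, $K\cap\triangle=\emptyset$) is open, since $h_0^{2d}$ separating $K$ means $\inf_{(x,y)\in K}\|h_0^{2d}(x)-h_0^{2d}(y)\|_\infty>0$, a condition stable under $C^0$-small perturbations of $h$. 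Hence it is enough to prove that each such $D_K$ is \emph{dense} in $C(X,[0,1]^m)$.

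\emph{Density of $D_K$: three types of collision.} Fix $h$ and $\ep>0$; I will produce $h'\in D_K$ with $\|h-h'\|_\infty<\ep$ by finitely many successive $C^0$-small perturbations, each removing the collisions (pairs $x\neq y$ with $h_0^{2d}(x)=h_0^{2d}(y)$) of one of three types; since ``no collision of a given type'' is an open condition, later perturbations do not undo earlier gains. The trichotomy is organized around the closed set $P_{2d}$ of points of period $\le 2d$ and the overlap of the orbit segments $O_z:=\{z,Tz,\dots,T^{2d}z\}$: either (a) $x,y\in P_{2d}$; or (b) (at least) $x\notin P_{2d}$ and $O_x\cap O_y=\emptyset$; or (c) $x\notin P_{2d}$ and $y=T^jx$ with $0<|j|\le 2d$ --- the only remaining case once $O_x\cap O_y\neq\emptyset$, and here $y\notin P_{2d}$ too, since $T^jx$ is periodic exactly when $x$ is. Type (a) is where the hypothesis $\dim(P_n)<mn/2$ enters: by induction on the period $p=1,\dots,2d$ along the closed sets $P_p$, making $h_0^{2d}|_{P_{2d}}$ injective reduces to the classical general-position theorem (a compact metric space of covering dimension $<N/2$ embeds under a generic map into $[0,1]^N$), applied with $N=mp$, which is available precisely because $mp>2\dim(P_p)$. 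The subtlety is that the perturbation must come from perturbing the single observable $h$: one perturbs $h$ on a set meeting each period-$p$ orbit in one point and transports the change by $T$, which is legitimate because the $p$ points of such an orbit are distinct.

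\emph{Types (b), (c), and the main obstacle.} For these the target is $[0,1]^{m(2d+1)}$, and $m(2d+1)\ge 2d+1>2\dim X$ for (b) (where the parameter $(x,y)$ ranges over a space of dimension $\le 2\dim X$) and $>\dim X$ for (c) (parameter $x$, in a space of dimension $\le\dim X$); so a general-position argument removes these collisions for a generic $h$ --- \emph{provided} one verifies that a $C^0$-perturbation of $h$ moves the delay coordinates of $h_0^{2d}(x)$ and $h_0^{2d}(y)$ with enough independence. This verification is the crux. The easy sub-case is when all points of $O_x\cup O_y$ are distinct (e.g. $x$ aperiodic or of period $>4d$): a perturbation supported near them moves the two delay-vectors freely and independently. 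The genuinely delicate sub-case is $x$ of \emph{intermediate} period $p\in(2d,4d]$: then $O_x$ and $O_{T^jx}$ may wind around the orbit and share points, so any perturbation couples the two delay-vectors. One then rewrites the collision equations $h(T^ix)=h(T^{i+j}x)$ ($0\le i\le 2d$) as a linear constraint on the orbit label $(h(x),\dots,h(T^{p-1}x))\in[0,1]^{mp}$, namely constancy along the blocks of the partition of $\mathbb{Z}/p$ generated by $i\sim i+j$ ($0\le i\le 2d$); since $p>2d$ these blocks are few enough that the constraint has codimension $>\dim X$, so a generic (and genuinely free) choice of the orbit label avoids it and no such $x$ survives. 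This intermediate-period analysis, together with the equivariant general-position argument of type (a), is precisely the point not carried out in full in \cite{Gut16}, and is where I expect the real work to lie. Assembling the finitely many perturbations yields $h'\in D_K$, so $D_K$ is dense and, by the first step, the theorem follows.
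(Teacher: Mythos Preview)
Your proposal has the right ingredients --- the Baire reduction, the dimension counts, and the trichotomy into short-period, disjoint-orbit-segment, and same-orbit cases (which matches the paper's Cases (A), (B), (C)) --- but the logical organization has a genuine gap.

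The problem is the coarse decomposition $K_j=\{(x,y):d(x,y)\ge 1/j\}$ together with the plan to prove $D_{K_j}$ dense by ``finitely many successive perturbations, one per type, each an open condition''. The set of type-(b) pairs is \emph{open} in $K_j$, not closed (a sequence of type-(b) pairs can converge to a type-(a) pair), so ``no type-(b) collision'' is not an open condition on $h$. Even if you first arrange no type-(a) and no type-(c) collision (this much \emph{is} an open condition, since $(P_{2d}\times P_{2d})\cup\bigcup_{0<|j|\le 2d}\mathrm{graph}(T^j)$ is closed), the remaining collisions lie in the open, noncompact set of type-(b) pairs, and a single local perturbation cannot remove them all without the risk of creating new ones elsewhere. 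Your appeal to ``general position for a generic $h$'' at that point is really a second Baire argument, and unfolding it forces you back onto a countable cover by small compact sets. A related issue: in type (a) you propose to perturb $h$ on a set meeting each period-$p$ orbit once, but a global continuous fundamental domain for the $\mathbb{Z}/p$-action on $H_p$ need not exist.

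The paper's resolution is to replace the coarse $K_j$'s by a much finer countable cover: for each $(x,y)\notin\triangle$ one chooses small product neighbourhoods $K_{(x,y)}=\overline{U_x}\times\overline{U_y}$ with $U_x,U_y$ taken inside the strata $H_{p}$ or $X\setminus P$, so that the orbit structure extends \emph{uniformly} to the closure --- e.g.\ the relevant segments $T^i\overline{U_x},\,T^j\overline{U_y}$ are pairwise disjoint, or $U_y=T^l U_x$ for a fixed shift $l$. Density of each $D_{K_{(x,y)}}$ is then obtained by one explicit perturbation: take a cover of $\overline{U_x}$ (and $\overline{U_y}$) of order $<\tilde p\, m/2$, choose a vertex in each element, and perturb the associated vertex vectors into affinely independent position via a partition of unity; your intermediate-period combinatorics is exactly the content of the paper's matrix lemma (a $(k-1)\times l$ matrix whose entries are variables each appearing at most twice has, for generic values, affinely independent columns). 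One then extracts a countable subcover via Lindel\"of on each stratum $H_m\times H_n$, $H_m\times(X\setminus P)$, $(X\setminus P)^2$, and applies Baire once. So the Baire step should happen over the \emph{local} sets $K_{(x,y)}$, not over the coarse $K_j$'s.
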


\begin{proof}[Outline of the proof of Theorem \ref{takens embedding for Z}:]
By the Baire category theorem, it suffices to show that $(X\times X)\setminus\triangle$ can be covered by countably many compact subsets $K_{1},K_{2},\dots$ such that each $D_{K_{i}}$ is open and dense in $C(X,[0,1]^{m})$. To achieve this, we may try to find open neighbourhoods $U_{(x,y)}$ in $X\times X$ of $(x,y)$ for every pair $(x,y)\in (X\times X)\setminus\triangle$ such that $D_{\overline{U}_{(x,y)}}$ is open and dense in $C(X,[0,1]^{m})$ and then extract a countable subcover of $\{U_{(x,y)}: (x,y)\}$ of $(X\times X)\setminus\triangle$ \footnote{Throughout this paper, all the unspecified closures are taken in $X$.}. It is not hard to show that $D_{K}$ is open in $C(X,[0,1]^{m})$ for each compact subset
$K$ of $(X\times X)\setminus\triangle$ (see Part 1 in the proof below).
Unfortunately, we are not able to find such open neighbourhoods for all pairs $(x,y)\in(X\times X)\setminus\triangle$,
e.g., $(x,y)\in P\times P$. Instead we divide the whole space $(X\times X)\setminus\triangle$ to the following cases:

\textbf{Case (A). The orbits of $x$ and $y$ are disjoint.}
\begin{itemize}
  \item Case (A.1). $x,y\in P_{2d}$ and their orbits are disjoint;
  \item Case (A.2). $x,y\in \cup_{n\geq 2d+1}P_{n}$ and their orbits are disjoint;
  \item Case (A.3). $x,y\in X\setminus P$ and their orbits are disjoint;
  \item Case (A.4). $x\in P_{2d}$, $y\in\cup_{n\geq 2d+1}P_{n}$ or vice verse, and therefore their orbits are disjoint;
  \item Case (A.5). $x\in P_{2d}$, $y\in X\setminus P$ or vice verse, and therefore their orbits are disjoint;
  \item Case (A.6). $x\in \cup_{n\geq 2d+1}P_{n}$, $y\in X\setminus P$ or vice verse, and therefore their orbits are disjoint.
\end{itemize}

\textbf{Case (B). Both $x$ and $y$ are periodic and their orbits intersect.}
\begin{itemize}
  \item Case (B.1). $x,y\in P_{2d}$ and their orbits intersect;
  \item Case (B.2). $x,y\in \cup_{n\geq 2d+1}P_{n}$ and their orbits intersect.
\end{itemize}

\textbf{Case (C). Both $x$ and $y$ are aperiodic and their orbits intersect.}

For each of these cases, we find a set $U_{(x,y)}$ containing $(x,y)$ such that $D_{\overline{U}_{(x,y)}}$ is dense in
$C(X,[0,1]^{m})$; moreover, the sets $U_{(x,y)}$ ($(x,y)\in (X\times X)\setminus\triangle$) are open w.r.t. the following subspaces equipped with the subspace topology:

\begin{itemize}
\item in Cases (A.1), (A.2), (A.4), (B.1) and (B.2), $U_{(x,y)}$ is open in $H_{m_{1}}\times H_{m_{2}}$ for some $m_{1},m_{2}\in\mathbb{N}$;
\item in Cases (A.3) and (C), $U_{(x,y)}$ is open in $\big((X\setminus P)\times(X\setminus P)\big)\setminus\triangle$;
\item in Cases (A.5) and (A.6), $U_{(x,y)}$ is open in $H_{m_{3}}\times(X\setminus P)$ or $(X\setminus P)\times H_{m_{4}}$ for some $m_{3},m_{4}\in\mathbb{N}$.
\end{itemize}
Note that every subspace of $(X\times X)\setminus\triangle$ is a Lindel\"{o}f space \footnote{A Lindel\"{o}f space is a topological space where every open cover has a countable subcover.}. Using the Lindel\"{o}f property of the following subspaces
$$(H_{m_{1}}\times H_{m_{2}})\setminus\triangle, \;\; H_{m_{3}}\times(X\setminus P),\;\;
(X\setminus P)\times H_{m_{4}},\;\;
\big((X\setminus P)\times(X\setminus P)\big)\setminus\triangle,$$
whose union covers $(X\times X)\setminus\triangle$, where $m_{1},m_{2},m_{3},m_{4}\in\mathbb{N}$,
we may find a countable closed cover $\mathcal{U}$ of $(X\times X)\setminus\triangle$ such that for each $K\in\mathcal{U}$, $D_{K}$ is open and dense in $C(X,[0,1]^{m})$.
\end{proof}

In the proof below,
for every positive integer $N$,
the coordinates of a vector $v\in ([0,1]^{m})^{N}$ is numbered from $0$ to $N-1$, i.e., $v=(v_{0},v_{1},\dots,v_{N-1})$,
where $v_{i}\in[0,1]^{m}$. We define $v|_{s}=v_{s}$ and $v|^{s}_{r}=(v_{i})_{i=r}^{s}$ for any $0\leq r\leq s\leq N-1$. For a finite subset $\mathcal{F}=\{x_{i}\}_{i\in I}$ of $([0,1]^{m})^{N}$, the convex hull of $\mathcal{F}$ is defined as
$$\text{co}(\mathcal{F})=\left\{\sum_{i\in I}\lambda_{i}x_{i}: \sum_{i\in I}\lambda_{i}=1, \lambda_{i}\geq0\right\}.$$

\begin{proof}[Proof of Theorem \ref{takens embedding for Z}]
\textbf{Part 1.} We prove that for every compact set $K\subset (X\times X)\setminus\triangle$, $D_{K}$ is open in $C(X,[0,1]^{m})$.

Fix $K$ and $h\in D_{K}$. Define $\alpha:K\to[0,1]$ by
$$\alpha(x,y)=\max_{0\leq i\leq2d}||h(T^{i}x)-h(T^{i}y)||_{\infty}$$
for every $(x,y)\in K$. Obviously, $\alpha$ is continuous on $K$. Since $K$ is a compact subset of $(X\times X)\setminus\triangle$, $\alpha$ attains its minimum on $K$. Assume that its minimum on $K$ is $a$. Since $\alpha(x,y)>0$ for each $(x,y)\in K$, we get $a>0$. For any $g\in C(X,[0,1]^{m})$ with $||g-h||_{\infty}<a/3$ and any $(x,y)\in K$, there exists $0\leq i_{0}\leq 2d$ such that $||h(T^{i_{0}}x)-h(T^{i_{0}}y)||_{\infty}\geq a$ and hence
\begin{align*}
&||g(T^{i_{0}}x)-g(T^{i_{0}}y)||_{\infty}\\
\geq & ||h(T^{i_{0}}x)-h(T^{i_{0}}y)||_{\infty}-||g(T^{i_{0}}x)-h(T^{i_{0}}x)||_{\infty}-||g(T^{i_{0}}y)-h(T^{i_{0}}y)||_{\infty}\\
> & ||h(T^{i_{0}}x)-h(T^{i_{0}}y)||_{\infty}-\frac{2a}{3}\geq \frac{a}{3}.
\end{align*}
This implies $g\in D_{K}$.

\textbf{Part 2.}
We denote the period of every $x\in X$ by $p_{x}\in\mathbb{N}\cup\{+\infty\}$ (if $x$ is aperiodic, we write $p_{x}=+\infty$) and define the \textbf{adjusted period} of $x$ by
$$\tilde{p}_{x}=\min\{2d+1,p_{x}\}.$$
We now consider the cases (A), (B) and (C). Fix $(x,y)\in (X\times X)\setminus\triangle$.

\textbf{Case (A). The orbits of $x$ and $y$ are disjoint.}
In particular, $x,Tx,\dots,T^{\tilde{p}_{x}-1}x$, $y,Ty,\dots,T^{\tilde{p}_{y}-1}y$ are pairwise distinct.

Case (A.1). $x,y\in P_{2d}$. By the definition, $x\in H_{p_{x}}$ ($p_{x}<2d+1$) and $y\in H_{p_{y}}$ ($p_{y}<2d+1$). We can choose $\epsilon>0$ such that
$$\overline{B}(x,\epsilon),T\overline{B}(x,\epsilon),\dots,T^{p_{x}-1}\overline{B}(x,\epsilon),
\overline{B}(y,\epsilon),T\overline{B}(y,\epsilon),\dots,T^{p_{y}-1}\overline{B}(y,\epsilon)$$
are pairwise disjoint. Since $P_{p_{x}-1}$ is closed in $X$ and $x\notin P_{p_{x}-1}$, $d(x,P_{p_{x}-1})>0$.
(If $p_{x}=1$, set $d(x, \emptyset)=+\infty$.) Observe that $H_{p_{x}}$ is open in $P_{p_{x}}$ and $x\in H_{p_{x}}$. One may select $0<\eta_{1}<\min\{\epsilon,d(x,P_{p_{x}-1})\}$ so that $B(x,\eta_{1})\cap P_{p_{x}}\subset H_{p_{x}}$. Similarly, there exists $0<\eta_{2}<\min\{\epsilon,d(y,P_{p_{y}-1})\}$ such that $B(y,\eta_{2})\cap P_{p_{y}}\subset H_{p_{y}}$. Set
$$U_{x}=B(x,\eta_{1})\cap P_{p_{x}},\;\;\;\;\; U_{y}=B(y,\eta_{2})\cap P_{p_{y}}.$$
Obviously, $U_{x}$ (resp. $U_{y}$) is open in $H_{p_{x}}$ (resp. $H_{p_{y}}$).
One can check that
$$\overline{U}_{x}\subset\overline{B}(x,\eta_{1})\cap P_{p_{x}}=\overline{B}(x,\eta_{1})\cap H_{p_{x}}\subset H_{p_{x}}$$
and
$$\overline{U}_{y}\subset\overline{B}(y,\eta_{2})\cap P_{p_{y}}=\overline{B}(y,\eta_{2})\cap H_{p_{y}}\subset H_{p_{y}}.$$

Case (A.2). $x,y\in \cup_{n\geq 2d+1}P_{n}$. Similarly to Case (A.1), we can take open neighbourhoods
$U_{x}$ and $U_{y}$ of $x$ and $y$ in $H_{p_{x}}$ and $H_{p_{y}}$ respectively such that
$\overline{U}_{x},T\overline{U}_{x},\dots,T^{2d}\overline{U}_{x}$, $\overline{U}_{y},T\overline{U}_{y},\dots,T^{2d}\overline{U}_{y}$ are pairwise disjoint, and $\overline{U}_{x}\subset H_{p_{x}}$ and $\overline{U}_{y}\subset H_{p_{y}}$.

Case (A.3). $x,y\in X\setminus P$. We can choose $\epsilon>0$ such that
$$\overline{B}(x,\epsilon),T\overline{B}(x,\epsilon),\dots,T^{2d}\overline{B}(x,\epsilon),
\overline{B}(y,\epsilon),T\overline{B}(y,\epsilon),\dots,T^{2d}\overline{B}(y,\epsilon)$$
are pairwise disjoint. Set $U_{x}=B(x,\epsilon)\cap(X\setminus P)$ and $U_{y}=B(y,\epsilon)\cap(X\setminus P)$.

Case (A.4). $x\in\cup_{n\geq 2d+1}P_{n}$ and $y\in P_{2d}$. Similarly to Case (A.1), we can take open neighbourhoods $U_{x}$ and $U_{y}$ of $x$ and $y$ in $H_{p_{x}}$ and $H_{p_{y}}$ respectively such that
$\overline{U}_{x},T\overline{U}_{x},\dots,T^{2d}\overline{U}_{x}$, $\overline{U}_{y},T\overline{U}_{y},\dots,T^{p_{y}-1}\overline{U}_{y}$ are pairwise disjoint, and $\overline{U}_{x}\subset H_{p_{x}}$ and $\overline{U}_{y}\subset H_{p_{y}}$.

Case (A.5). $x\in X\setminus P$ and $y\in P_{2d}$. Similarly to Cases (A.1) and (A.3), we can take open neighbourhoods $U_{x}$ and $U_{y}$ of $x$ and $y$ in $X\setminus P$ and $H_{p_{y}}$ respectively such that
$$\overline{U}_{x},T\overline{U}_{x},\dots,T^{2d}\overline{U}_{x},\overline{U}_{y},T\overline{U}_{y},\dots,T^{p_{y}-1}\overline{U}_{y}$$
are pairwise disjoint and $\overline{U}_{y}\subset H_{p_{y}}$.

Case (A.6). $x\in X\setminus P$ and $y\in \cup_{n\geq 2d+1}P_{n}$. Similarly to Case (A.5), there exists open neighbourhoods $U_{x}$ and $U_{y}$ of $x$ and $y$ in $X\setminus P$ and $H_{p_{y}}$ respectively such that
$\overline{U}_{x},T\overline{U}_{x},\dots,T^{2d}\overline{U}_{x}$, $\overline{U}_{y},T\overline{U}_{y},\dots,T^{2d}\overline{U}_{y}$
are pairwise disjoint and $\overline{U}_{y}\subset H_{p_{y}}$.

Set $K_{(x,y)}=\overline{U}_{x}\times\overline{U}_{y}$. In the following we show that $D_{K_{(x,y)}}$ is dense in $C(X,[0,1]^{m})$. Let $\epsilon>0$ and $\tilde{f}\in C(X,[0,1]^{m})$. We will show that there exists $f\in C(X,[0,1]^{m})$
so that $\|f-\tilde{f}\|_{\infty}<\epsilon$ and $f\in D_{K_{(x,y)}}$.

The facts $\dim(X)=d$ and $\dim(P_{n})<mn/2$ for all $1\leq n\leq2d$ imply $\dim(\overline{U}_{j})<\tilde{p}_{j}m/2$ for $j\in\{x,y\}$. By \eqref{lebesgue dimension}, one can choose finite open covers $\alpha_{x}$ and $\alpha_{y}$ of $\overline{U}_{x}$ and $\overline{U}_{y}$ respectively such that for $j\in\{x,y\}$ it holds that
$$\max_{W\in\alpha_{j},0\leq k\leq \tilde{p}_{j}-1}\diam(\tilde{f}(T^{k}W))<\frac{\epsilon}{2},\;\;\;\;
\ord(\alpha_{j})<\frac{\tilde{p}_{j}m}{2}.$$
For each $W\in\alpha_{j}$ choose $q_{W}\in W$ so that $\{q_{W}\}_{W\in\alpha_{j}}$ is a collection of distinct points in $\overline{U}_{j}$ and define $\tilde{v}_{W}=(\tilde{f}(T^{k}q_{W}))_{k=0}^{\tilde{p}_{j}-1}\in ([0,1]^{m})^{\tilde{p}_{j}}$ (see Figure \ref{disjointcase}). Without loss of generality, we assume $\tilde{p}_{x}\geq \tilde{p}_{y}$.

\begin{figure}[H]
    \centering
    \includegraphics[width=3.0in]{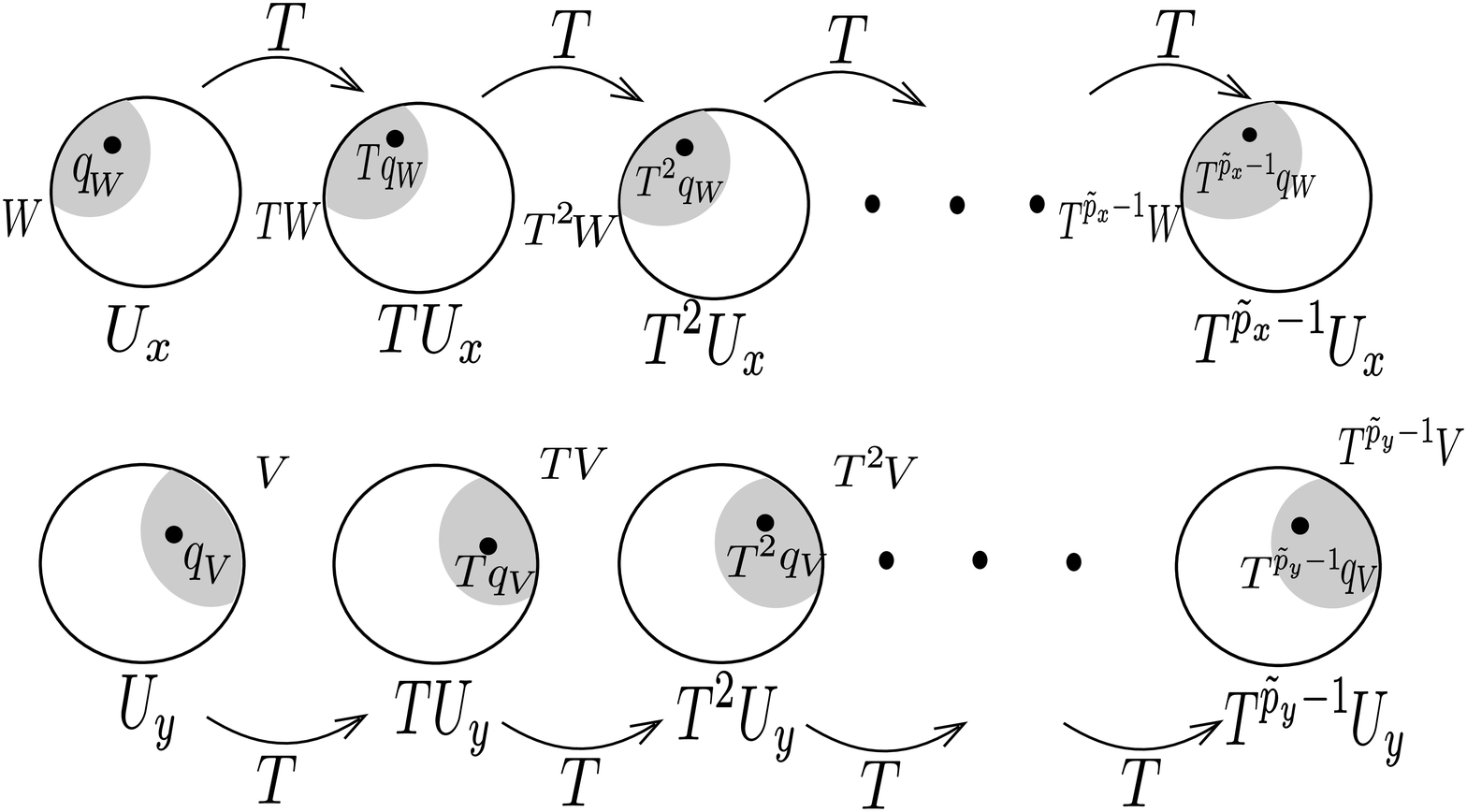}
     \caption{Case (A)}
   \label{disjointcase}
\end{figure}
\begin{Claim}\label{claim1}
For $j\in\{x,y\}$, there exists a continuous function $F_{j}:\overline{U}_{j}\rightarrow ([0,1]^{m})^{\tilde{p}_{j}}$ such that  the following properties hold:
\begin{enumerate}[(1)]
\item for any $W\in\alpha_{j}$, $\|F_j(q_{W})-\tilde{v}_{W}\|_{\infty}<\epsilon/2$;

\item for any $z\in\overline{U}_{j}$,
$F_{j}(z)\in co\{F_{j}(q_{W}):z\in W\in\alpha_{j}\}$;

\item if $x'\in\overline{U}_{x}$ and $y'\in\overline{U}_{y}$,
then $F_{x}(x')\neq F_{y}(y')^{\oplus \tilde{p}_{x}}$, where $F_{y}(y')^{\oplus\tilde{p}_{x}}:
\overline{U}_{y}\rightarrow ([0,1]^{m})^{\tilde{p}_{x}}$
is the function given by the formula $F_{y}(y')^{\oplus\tilde{p}_{x}}|_k=F_{y}(y')|_{k\text{ mod }\tilde{p}_{y}}$
for $0\leq k\leq \tilde{p}_{x}-1$.
\end{enumerate}
\end{Claim}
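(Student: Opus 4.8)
The plan is to build $F_x$ and $F_y$ essentially as perturbations of the evaluation data $\{\tilde v_W\}$, obtained by composing a partition of unity subordinate to $\alpha_j$ with a general-position assignment of vertices, in the spirit of the standard argument that maps into a low-dimensional target may be made to miss a diagonal. First I would fix, for $j\in\{x,y\}$, a partition of unity $\{\phi_W\}_{W\in\alpha_j}$ subordinate to $\alpha_j$ on $\overline U_j$, so that $\psi_j(z)=(\phi_W(z))_{W\in\alpha_j}$ maps $\overline U_j$ into the nerve $|\alpha_j|$, a simplicial complex of dimension $\ord(\alpha_j)<\tilde p_jm/2$. For each $W$ I would pick a point $v_W\in([0,1]^m)^{\tilde p_j}$ with $\|v_W-\tilde v_W\|_\infty<\epsilon/2$ to be specified later, and set $F_j(z)=\sum_{W}\phi_W(z)v_W$. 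With $v_W=F_j(q_W)$ (which holds once the $\phi_W$ are chosen so that $\phi_W(q_{W'})=\delta_{W,W'}$ — always arrangeable by shrinking), properties (1) and (2) are immediate: (1) since $\|F_j(q_W)-\tilde v_W\|=\|v_W-\tilde v_W\|<\epsilon/2$, and (2) since $F_j(z)$ is by construction a convex combination of exactly those $v_W$ with $\phi_W(z)>0$, hence $z\in W$.

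The real content is arranging property (3), i.e. that the images of $F_x$ and $F_y^{\oplus\tilde p_x}$ are disjoint in $([0,1]^m)^{\tilde p_x}$. I would view $F_x$ as a map $\overline U_x\to([0,1]^m)^{\tilde p_x}$ determined by the $v_W$'s for $W\in\alpha_x$, and $F_y^{\oplus\tilde p_x}$ as a map $\overline U_y\to([0,1]^m)^{\tilde p_x}$ determined by the $v_{W'}$'s for $W'\in\alpha_y$ (periodically repeated). The key dimension count: $F_x(\overline U_x)$ lies in a finite union of simplices, one for each simplex $\{W_0,\dots,W_p\}$ of the nerve of $\alpha_x$, spanned by $\{v_{W_0},\dots,v_{W_p}\}$ in $([0,1]^m)^{\tilde p_x}$, and likewise $F_y^{\oplus\tilde p_x}(\overline U_y)$ is a finite union of simplices spanned by repeated $v_{W'}$-blocks. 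The sum of the (generic) dimensions of a simplex from the first family and one from the second is at most $\ord(\alpha_x)+\ord(\alpha_y)+1<\frac{\tilde p_xm}{2}+\frac{\tilde p_ym}{2}+1\le \tilde p_xm+1$ — wait, more carefully, since $F_y^{\oplus\tilde p_x}$ takes values in the diagonally-repeated copy inside $([0,1]^m)^{\tilde p_x}$, its image simplices genuinely sit in an $\ord(\alpha_y)$-dimensional affine subspace regardless of the ambient repetition, so the relevant bound is $\ord(\alpha_x)+\ord(\alpha_y)<\tfrac{\tilde p_xm}{2}+\tfrac{\tilde p_ym}{2}\le\tilde p_xm=\dim\bigl(([0,1]^m)^{\tilde p_x}\bigr)$ (using $\tilde p_x\ge\tilde p_y$). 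Hence a generic choice of the finitely many vertices $\{v_W\}_{W\in\alpha_x}\cup\{v_{W'}\}_{W'\in\alpha_y}$ in a small neighbourhood of the $\tilde v$'s — by Baire category in the finite-dimensional space of vertex-tuples — makes every simplex of the first family disjoint from every simplex of the second, which is exactly (3). The functions $F_x,F_y$ are continuous as finite convex combinations of continuous partition-of-unity functions.

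The main obstacle I expect is making the general-position argument for (3) honest, specifically keeping careful track of which affine subspace $F_y^{\oplus\tilde p_x}(\overline U_y)$ actually lives in and why the naive Euclidean dimension count ($p$-simplex plus $q$-simplex in $N$-space are generically disjoint when $p+q<N$) applies to the disjoint-support pieces: one must note the simplices in the two families are spanned by \emph{different} vertex sets (the $\alpha_x$-vertices versus the $\alpha_y$-vertices), so a single joint perturbation of all these finitely many vertices puts every pair in general position simultaneously, and the strict inequalities $\ord(\alpha_j)<\tilde p_jm/2$ are exactly what is needed for the count $\ord(\alpha_x)+\ord(\alpha_y)<\tilde p_xm$ to close. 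A secondary technical point is that the perturbations must be kept within $\epsilon/2$ in sup-norm so that (1) survives; since the general-position conditions are open and dense, this is not a real difficulty. I would remark that this is precisely the mechanism by which the hypothesis $\dim(P_n)<mn/2$ enters — it is what allows two orbit-blocks of lengths $\tilde p_x,\tilde p_y\le 2d+1$ to be separated inside the $\tilde p_x m$-dimensional target.
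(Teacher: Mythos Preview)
Your argument is correct and follows the same construction as the paper: build $F_j$ as a partition-of-unity map $z\mapsto\sum_W\psi_W(z)\vec v_W$ with each $\vec v_W$ a small perturbation of $\tilde v_W$, so that (1) and (2) are immediate and (3) follows from a general-position/dimension count in $([0,1]^m)^{\tilde p_x}$. The only tactical difference is that you perturb \emph{both} the $\alpha_x$- and $\alpha_y$-vertices and then invoke the clean fact that a $p$-simplex and a $q$-simplex in $\mathbb R^N$ are generically disjoint when $p+q<N$, whereas the paper fixes $\vec v_W=\tilde v_W$ for $W\in\alpha_y$ and perturbs only the $\alpha_x$-vertices --- this forces the paper into a short case split (via Lemma~\ref{affinely independent and linear independent}(1) versus (2)) according to whether the fixed vectors $(\tilde v_W)^{\oplus\tilde p_x}$ happen to be linearly independent, a split your version avoids.
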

Let us assume Claim \ref{claim1} and complete the proof as follows. Set $A=\bigcup_{j\in \{x,y\}}\bigcup_{k=0}^{\tilde{p}_{j}-1}T^{k}\overline{U}_{j}$. Using the fact that $\overline{U}_{x},T\overline{U}_{x},\dots,T^{\tilde{p}_{x}-1}\overline{U}_{x}$,
$\overline{U}_{y},T\overline{U}_{y},\dots,T^{\tilde{p}_{y}-1}\overline{U}_{y}$ are pairwise disjoint, we define $f':A\rightarrow [0,1]^{m}$ by $$f'_{|T^{k}\overline{U}_{j}}(T^{k}z)=F_{j}(z)|_{k}$$ for every $j\in\{x,y\}$, $0 \leq k\leq\tilde{p}_{j}-1$ and $z\in\overline{U}_{j}$.

Now we show $\|f'-\tilde{f}_{|A}\|_{\infty}<\epsilon$. Fix $j\in\{x, y\}$. Take $z\in\overline{U}_{j}$ and $0 \leq k\leq\tilde{p}_{j}-1$. By property (2), we have
$$f'(T^{k}z)=F_{j}(z)|_{k}\in co\{F_{j}(q_{W})|_{k}:z\in W\in\alpha_{j}\},$$
and hence
$$||f'(T^{k}z)-\tilde{f}(T^{k}z)||_{\infty}\leq\max_{z\in W\in\alpha_{j}}||F_{j}(q_{W})|_{k}-\tilde{f}(T^{k}z)||_{\infty}.$$
Thus, to prove $\|f'-\tilde{f}_{|A}\|_{\infty}<\epsilon$ it suffices to show $||F_{j}(q_{W})|_{k}-\tilde{f}(T^{k}z)||_{\infty}<\epsilon$.
Fix $z\in W\in\alpha_{j}$. Note that
$$||F_{j}(q_{W})|_{k}-\tilde{f}(T^{k}z)||_{\infty}\leq||F_{j}(q_{W})|_{k}-\tilde{v}_{W}|_{k}||_{\infty}+||\tilde{v}_{W}|_{k}-\tilde{f}(T^{k}z)||_{\infty}.$$
By property (1), $||F_{j}(q_{W})|_{k}-\tilde{v}_{W}|_{k}||_{\infty}\leq\epsilon/2$. Because $\diam(\tilde{f}(T^{k}W))<\epsilon/2$ for every $W\in\alpha_{j}$ and $0 \leq k\leq\tilde{p}_{j}-1$, we have
$$||\tilde{v}_{W}|_{k}-\tilde{f}(T^{k}z)||_{\infty}=||\tilde{f}(T^{k}q_{W})-\tilde{f}(T^{k}z)||_{\infty}<\epsilon/2.$$
Therefore $$||F_{j}(q_{W})|_{k}-\tilde{f}(T^{k}z)||_{\infty}<\epsilon.$$
By Lemma \ref{continuous extension} below, there is a continuous function $f:X\rightarrow [0,1]^{m}$ so that $f{}_{|A}=f'$ and $\|f-\tilde{f}\|_{\infty}<\epsilon$.

Now we prove $f\in D_{K_{(x,y)}}$. Assume that for some $(x',y')\in K_{(x,y)}$, we have $f_{0}^{2d}(x')=f_{0}^{2d}(y')$. In Cases (A.2), (A.3) and (A.6), $\tilde{p}_{x}=\tilde{p}_{y}=2d+1$, and therefore by the definition of $F_{y}(y')^{\oplus\tilde{p}_{x}}$ and $f$, we know
$$F_{y}(y')^{\oplus\tilde{p}_{x}}=F_{y}(y')=f_{0}^{2d}(y')=f_{0}^{2d}(x')=F_{x}(x'),$$
a contradiction to property (3).

In Cases (A.1), (A.4) and (A.5), note that $\tilde{p}_{y}=p_{y}$ and $y'\in\overline{U}_{y}\subset H_{p_{y}}$. It follows from the definition of $F_{y}(y')^{\oplus\tilde{p}_{x}}$ that for every $0\leq k\leq \tilde{p}_{x}-1$,
$$F_{y}(y')^{\oplus\tilde{p}_{x}}|_{k}=F_{y}(y')|_{(k \text{ mod } p_{y})}=f(T^{(k \text{ mod } p_{y})}y')=f(T^{k}y'),$$
where in the last equality we use $T^{p_{y}}y'=y'$,
and hence $$F_{y}(y')^{\oplus\tilde{p}_{x}}=(f(y'),f(Ty'),\dots,f(T^{\tilde{p}_{x}-1}y')).$$
Obviously, $$F_{x}(x')=(f(x'),f(Tx'),\dots,f(T^{\tilde{p}_{x}-1}x')).$$
So the assumption $f_{0}^{2d}(x')=f_{0}^{2d}(y')$ implies
$$F_{x}(x')=F_{y}(y')^{\oplus\tilde{p}_{x}},$$ a contradiction to property (3). This ends the proof of Case (A).

\medskip

The remaining task for Case (A) is to verify Claim \ref{claim1}. In fact, for $j\in\{x,y\}$ let $\{\psi_{W}\}_{W\in\alpha_{j}}$ be a partition of unity subordinate to $\alpha_{j}$; that is, a collection of continuous functions $\psi_{W}:\overline{U}_{j}\to [0,1]$ such that
$$\sum_{W\in \alpha_j}\psi_{W}(z)=1\;\;\text{for all}\;\;z\in\overline{U}_{j}$$
and $\supp(\psi_{W})\subset W$, and we can further assume that $\psi_{W}(q_{W})=1$ for all $W\in\alpha_{j}$.  Set
$\vec{v}_{W}=\tilde{v}_{W}$ for all $W\in\alpha_{y}$. Let $\vec{v}_{W}\in([0,1]^{m})^{\tilde{p}_{x}}$ be vectors that will be specified later and will be approximately equal to $\tilde{v}_{W}$ for all $W\in\alpha_{x}$. We define $F_{j}:\overline{U}_{j}\to ([0,1]^{m})^{\tilde{p}_{j}}$ for $j\in\{x,y\}$ as follows:
$$F_{j}(z)=\sum_{W\in\alpha_{j}}\psi_{W}(z)\vec{v}_{W}.$$
Note that for every $W\in\alpha_{j}$,
$$F_{j}(q_{W})=\vec{v}_{W}.$$
For any $z\in\overline{U}_{j}$, define $\alpha_{j,z}=\{W\in\alpha_{j}:\psi_{W}(z)>0\}$. Property (3) is equivalent to the following inequality:

\begin{equation}\label{explicit equality-1-1 for two periodic points}
\sum_{W\in\alpha_{x,x'}}\psi_{W}(x')\vec{v}_{W}\neq\sum_{W\in\alpha_{y,y'}}\psi_{W}(y')(\vec{v}_{W})^{\oplus\tilde{p}_{x}}.\end{equation}
Note that the total number of vectors in \eqref{explicit equality-1-1 for two periodic points} is bounded from above by
the number of elements in $\alpha_{x,x'}\cup\alpha_{y,y'}$, which is not more than $\tilde{p}_{x}m+1$.
Set
$$V_{x'}=\text{span}\left\{\vec{v}_{W}:W\in\alpha_{x,x'}\right\},\;\;\;V_{y'}^{\oplus\tilde{p}_{x}}=\text{span}\left\{(\vec{v}_{V})^{\oplus\tilde{p}_{x}}:V\in\alpha_{y,y'}\right\}.$$
By the definition of $\alpha_{j,z}$, we have $\dim(V_{x'})\leq\ord(\alpha_{x})+1\leq(\tilde{p}_{x}m+1)/2$ and
$\dim(V_{y'}^{\oplus\tilde{p}_{x}})\leq\ord(\alpha_{y})+1\leq(\tilde{p}_{y}m+1)/2$.

Fix $x'$ and $y'$. Set $r=\dim(V_{y'}^{\oplus\tilde{p}_{x}})$, $s=\dim(V_{x'})$ and $m'=\tilde{p}_{x}m$.
If $r=(\tilde{p}_{y}m+1)/2$, then $(\vec{v}_{W})^{\oplus\tilde{p}_{x}}$ ($W\in\alpha_{y,y'}$) are linearly independent and
$r+s\leq m'+1$. By Lemma \ref{affinely independent and linear independent} (2) below we know that for almost every choice of $\vec{v}_{W}$ ($W\in\alpha_{x,x'}$) the following vectors
$$\vec{v}_{W}\;\;\text{and}\;\;(\vec{v}_{V})^{\oplus\tilde{p}_{x}}
\;\;(\text{for all}\;\;W\in\alpha_{x,x'}\;\;\text{and}\;\;V\in\alpha_{y,y'})$$ are affinely independent
\footnote{The vectors $v_{1},\dots,v_{m}\in\mathbb{R}^{n}$ are called affinely independent if for
any $\lambda_{i}\in\mathbb{R}$, $i=1,\dots,m$, the conditions $\sum_{i=1}^{m}\lambda_{i}v_{i}=0$
and $\sum_{i=1}^{m}\lambda_{i}=0$ imply $\lambda_{i}=0$ for all $i=1,\dots,m$.}.
Observe that $\sum_{W\in\alpha_{x,x'}}\psi_{W}(x')-\sum_{W\in\alpha_{y,y'}}\psi_{W}(y')=0$ and that for all
$W\in\alpha_{x,x'}$ and $V\in\alpha_{y,y'}$, $\psi_{W}(x')$ and $\psi_{V}(y')$ are positive. Therefore
\eqref{explicit equality-1-1 for two periodic points} holds for almost every choice of $\vec{v}_{W}$ ($W\in\alpha_{x,x'}$).
If $r<(\tilde{p}_{y}m+1)/2$, then $r+s\leq m'$. By Lemma \ref{affinely independent and linear independent} (1) below,
we know that for almost every choice of $\vec{v}_{W}$ ($W\in\alpha_{x,x'}$), it holds that

\begin{equation*}
\text{span}\Big\{\vec{v}_{W},W\in\alpha_{x,x'}\Big\}\cap
\text{span}\left\{(\vec{v}_{V})^{\oplus\tilde{p}_{x}},V\in\alpha_{y,y'}\right\}=\{\vec{0}\}
\end{equation*}
and that $\vec{v}_{W}$ ($W\in\alpha_{x,x'}$) are linearly independent.
This implies that for almost every choice of $\vec{v}_{W}$ ($W\in\alpha_{x,x'}$),
\eqref{explicit equality-1-1 for two periodic points} holds. As there are only a finite number of constraints of form
\eqref{explicit equality-1-1 for two periodic points}, for almost every choice of $\vec{v}_{W}$ ($W\in\alpha_{x}$),
\eqref{explicit equality-1-1 for two periodic points} holds for all $x'\in\overline{U}_{x}$ and $y'\in\overline{U}_{y}$.
Therefore we can choose $\vec{v}_{W}$ ($W\in\alpha_{x}$) such that both properties $(1)$ and $(3)$ hold. Obviously, property (2) holds. This finishes the proof of Claim \ref{claim1}.

\medskip

\textbf{Case (B). Both $x$ and $y$ are periodic and their orbits intersect.} Assume that there exists $1 \leq l\leq p_{x}-1$ such that $y=T^{l}x$.
Similarly to Case (A.1), we can choose an open neighbourhood $U$ of $x$ in $H_{p_{x}}$ such that $\overline{U},T\overline{U},\dots,T^{p_{x}-1}\overline{U}$ are pairwise disjoint and $\overline{U}\subset H_{p_{x}}$.
Set $K_{(x,y)}=\overline{U}\times T^{l}\overline{U}$. In the following we show that $D_{K_{(x,y)}}$ is dense in $C(X,[0,1]^{m})$. Let $\epsilon>0$ and $\tilde{f}\in C(X,[0,1]^{m})$. We will prove that there exists $f\in C(X,[0,1]^{m})$
so that $\|f-\tilde{f}\|_{\infty}<\epsilon$ and $f\in D_{K_{(x,y)}}$.

By assumption, $\dim(\overline{U})<\tilde{p}_{x}m/2$.
So one can choose a finite open cover $\alpha$ of $\overline{U}$ such that
$$\max_{W\in\alpha,0\leq k\leq p_{x}-1}\diam(\tilde{f}(T^{k}W))<\frac{\epsilon}{2},\;\;\; \ord(\alpha)<\frac{\tilde{p}_{x}m}{2}.$$
For each $W\in\alpha$ choose $q_{W}\in W$ so that $\{q_{W}\}_{W\in\alpha}$  is a collection of distinct points in
$\overline{U}$ and define $\tilde{v}_{W}=(\tilde{f}(T^{j}q_{W}))_{j=0}^{p_{x}-1}\in ([0,1]^{m})^{p_{x}}$ (see Figure \ref{fig: jointperiodic} below).
\begin{figure}[H]
    \centering
    \includegraphics[width=3.0in]{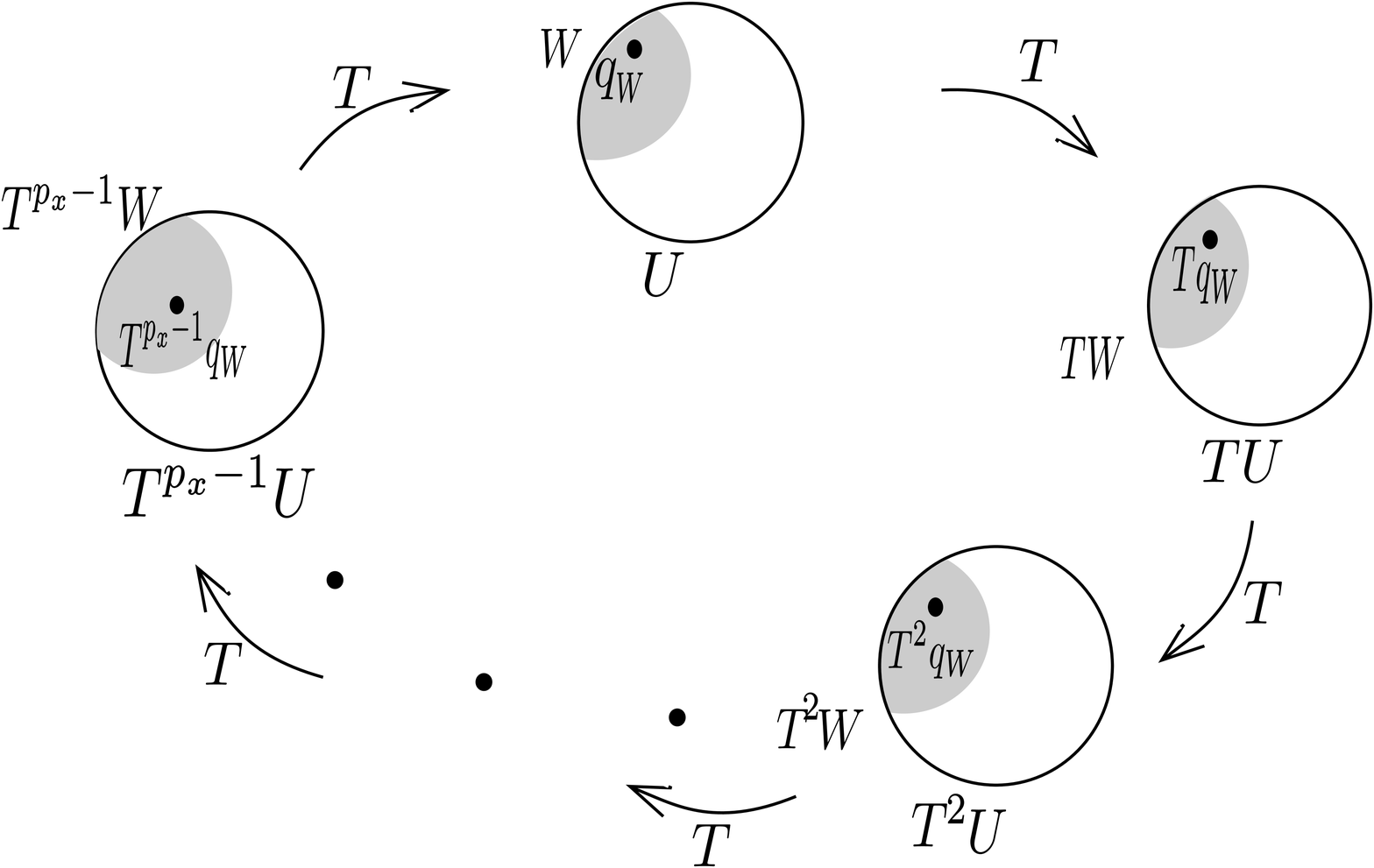}
     \caption{Case (B)}
   \label{fig: jointperiodic}
\end{figure}
\begin{Claim}\label{claim3}
There is a continuous function $F:\overline{U}\rightarrow([0,1]^{m})^{p_{x}}$ satisfying the following properties:
\begin{enumerate}[(a)]
\item for any $W\in\alpha$, $||F(q_{W})-\tilde{v}_{W}||_{\infty}<\epsilon/2$;

\item for any $z\in\overline{U}$, $F(z)\in co\{F(q_{W}):z\in W\in\alpha\}$;

\item if $x',y'\in\overline{U}$, then there exists $0\leq i\leq\tilde{p}_{x}-1$ such that $F(x')|_{i}\neq F(y')|_{(i+l)\text{ mod } p_{x}}$.
\end{enumerate}
\end{Claim}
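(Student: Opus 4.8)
The plan is to reproduce the construction of Case (A), now around the single tower $\overline{U},T\overline{U},\dots,T^{p_{x}-1}\overline{U}$. Let $\{\psi_{W}\}_{W\in\alpha}$ be a partition of unity on $\overline{U}$ subordinate to $\alpha$ with $\psi_{W}(q_{W})=1$ for every $W\in\alpha$ (hence $\psi_{W'}(q_{W})=0$ whenever $W'\neq W$), and for $w\in([0,1]^{m})^{p_{x}}$ write $w^{\to l}$ for the element of $([0,1]^{m})^{p_{x}}$ whose $j$-th coordinate is $w|_{(j+l)\bmod p_{x}}$. I will look for $F$ of the form
\[
F(z)=\sum_{W\in\alpha}\psi_{W}(z)\vec{v}_{W},\qquad z\in\overline{U},
\]
where each $\vec{v}_{W}\in([0,1]^{m})^{p_{x}}$ satisfies $\|\vec{v}_{W}-\tilde{v}_{W}\|_{\infty}<\epsilon/2$ and will be pinned down below. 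Then $F(q_{W})=\vec{v}_{W}$, so (a) holds automatically, and (b) holds because $\psi_{W}(z)>0$ forces $z\in W$ while $(\psi_{W}(z))_{W}$ is a probability vector. The whole content is the choice of the $\vec{v}_{W}$ making (c) true.

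Writing $\alpha_{z}=\{W\in\alpha:\psi_{W}(z)>0\}$ and using that $w\mapsto w^{\to l}$ is linear, the failure of (c) for a pair $x',y'\in\overline{U}$ is exactly the identity
\[
\sum_{W\in\alpha_{x'}}\psi_{W}(x')\,\vec{v}_{W}|^{\tilde{p}_{x}-1}_{0}=\sum_{V\in\alpha_{y'}}\psi_{V}(y')\,(\vec{v}_{V}^{\to l})|^{\tilde{p}_{x}-1}_{0}\quad\text{in }([0,1]^{m})^{\tilde{p}_{x}}.
\]
Since $\ord(\alpha)<\tilde{p}_{x}m/2$, the number of vectors occurring here is $|\alpha_{x'}|+|\alpha_{y'}|\le 2(\ord(\alpha)+1)\le\tilde{p}_{x}m+1$, and the sum of the signed scalar weights equals $1-1=0$ while every weight is strictly positive. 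So, exactly as in Case (A), it is enough to arrange — by choosing the $\vec{v}_{W}$ inside small supremum-norm balls around the $\tilde{v}_{W}$ — that for every pair $(x',y')$ the two families
\[
\bigl\{\vec{v}_{W}|^{\tilde{p}_{x}-1}_{0}:W\in\alpha_{x'}\bigr\}\quad\text{and}\quad\bigl\{(\vec{v}_{V}^{\to l})|^{\tilde{p}_{x}-1}_{0}:V\in\alpha_{y'}\bigr\}
\]
are jointly affinely independent when the count equals $\tilde{p}_{x}m+1$, and otherwise have linear spans meeting only in $\{\vec{0}\}$ with each family linearly independent; in either case the displayed identity becomes impossible. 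As there are only finitely many such constraints, it suffices that each holds for almost every $(\vec{v}_{W})_{W\in\alpha}$, and then intersect.

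The one point absent from Case (A) is that both sides of the identity are built from the same family $\{\vec{v}_{W}\}$, so $\alpha_{x'}$ and $\alpha_{y'}$ may overlap and one cannot keep one side fixed; I expect this to be the main obstacle. It is resolved by the observation that the shift-and-truncate map $w\mapsto(w^{\to l})|^{\tilde{p}_{x}-1}_{0}$ from $([0,1]^{m})^{p_{x}}$ to $([0,1]^{m})^{\tilde{p}_{x}}$ is, up to a permutation of coordinates, the identity when $\tilde{p}_{x}=p_{x}$ (Case (B.1)) and a coordinate projection when $\tilde{p}_{x}=2d+1<p_{x}$ (Case (B.2)); since $1\le l\le p_{x}-1$ forces $l\not\equiv 0\pmod{p_{x}}$, no nonzero affine relation among the vectors $\vec{v}_{W}|^{\tilde{p}_{x}-1}_{0}$ ($W\in\alpha_{x'}$) and $(\vec{v}_{V}^{\to l})|^{\tilde{p}_{x}-1}_{0}$ ($V\in\alpha_{y'}$) can hold identically in $(\vec{v}_{W})_{W\in\alpha}$ — already the $0$-th output block forces this, because the coordinates $\vec{v}_{W}|_{0}$ and $\vec{v}_{W}|_{l}$ are independent. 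Feeding these rank facts into Lemma \ref{affinely independent and linear independent}, split into the two regimes exactly as in Case (A), gives the required generic choice of the $\vec{v}_{W}$, and with it Claim \ref{claim3}.

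Finally, once the claim is in hand, Case (B) concludes just as Case (A): put $A=\bigcup_{k=0}^{p_{x}-1}T^{k}\overline{U}$ and define $f'(T^{k}z)=F(z)|_{k}$ for $z\in\overline{U}$, $0\le k\le p_{x}-1$, which makes sense because $\overline{U},T\overline{U},\dots,T^{p_{x}-1}\overline{U}$ are pairwise disjoint; then $\|f'-\tilde{f}_{|A}\|_{\infty}<\epsilon$ follows from (a), (b) and the mesh bound on $\tilde{f}$, and Lemma \ref{continuous extension} provides $f\in C(X,[0,1]^{m})$ with $f_{|A}=f'$ and $\|f-\tilde{f}\|_{\infty}<\epsilon$. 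If $(x',T^{l}y')\in K_{(x,y)}=\overline{U}\times T^{l}\overline{U}$ (with $x',y'\in\overline{U}$) satisfied $f_{0}^{2d}(x')=f_{0}^{2d}(T^{l}y')$, then, since $\overline{U}\subset H_{p_{x}}$, the equalities $f(T^{i}x')=F(x')|_{i\bmod p_{x}}$ and $f(T^{i}(T^{l}y'))=F(y')|_{(i+l)\bmod p_{x}}$ would agree for all $0\le i\le 2d$, and restricting to $0\le i\le\tilde{p}_{x}-1\le 2d$ contradicts (c). Hence $f\in D_{K_{(x,y)}}$.
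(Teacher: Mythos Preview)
Your setup, the partition-of-unity definition of $F$, and the verification of (a), (b) are exactly as in the paper, and your reduction of (c) to the statement that the family
\[
\bigl\{\vec{v}_{W}|_{0}^{\tilde{p}_{x}-1}:W\in\alpha_{x'}\bigr\}\cup\bigl\{(\vec{v}_{V}^{\to l})|_{0}^{\tilde{p}_{x}-1}:V\in\alpha_{y'}\bigr\}
\]
is affinely independent for almost every choice of $(\vec{v}_{W})_{W\in\alpha}$ is also correct. The gap is in how you justify this last step.

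You propose to ``feed these rank facts into Lemma~\ref{affinely independent and linear independent}'', splitting into the two regimes of Case~(A). But that lemma requires a clean decomposition of the family into a \emph{fixed} linearly independent part and a \emph{Lebesgue-random} part in $[0,1]^{\tilde{p}_{x}m}$. Here no such decomposition exists: if $W\in\alpha_{x'}\cap\alpha_{y'}$ (which you yourself flag as the obstacle), the two columns $\vec{v}_{W}|_{0}^{\tilde{p}_{x}-1}$ and $(\vec{v}_{W}^{\to l})|_{0}^{\tilde{p}_{x}-1}$ are both built from the same scalar coordinates of $\vec{v}_{W}$ and in Case~(B.1) are literally coordinate permutations of one another. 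Neither column is Lebesgue-distributed conditionally on the other, so Lemma~\ref{affinely independent and linear independent} does not apply. Your ``$0$-th block'' observation---that $\vec{v}_{W}|_{0}$ and $\vec{v}_{W}|_{l}$ are independent because $l\not\equiv 0\pmod{p_{x}}$---is true but too weak: it only shows that the $0$-th $m$-block of the matrix has independent random columns, which yields at most $m+1$ affinely independent columns, while you need up to $\tilde{p}_{x}m+1$. Looking at several blocks at once reintroduces exactly the repetitions you were trying to avoid, since $\vec{v}_{W}|_{i}$ recurs in the shifted column in row $(i-l)\bmod p_{x}$.

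The paper does not use Lemma~\ref{affinely independent and linear independent} here at all. It records the column family as a $(\tilde{p}_{x}m)\times(m_{1}+m_{2})$ matrix $\mathcal{M}$ whose entries are the scalar variables $\vec{v}_{W}|_{j}$, observes that each variable occurs at most twice in $\mathcal{M}$ and never twice in the same row or column, and then invokes Lemma~\ref{affinely independent for matrix}. That lemma (proved by expanding the determinant of $\mathcal{M}$ augmented by a row of $1$'s in a variable that appears twice and inducting on the size) is precisely the tool that handles the shared-coordinate structure you identified. Replacing your appeal to Lemma~\ref{affinely independent and linear independent} by this matrix lemma closes the gap; the rest of your argument then goes through as written.
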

Let us assume Claim \ref{claim3} and complete the proof as follows. Set $A=\bigcup_{i=0}^{p_{x}-1}T^{i}\overline{U}$. Using the fact that $\overline{U},T\overline{U},\dots,T^{p_{x}-1}\overline{U}$ are pairwise disjoint, we define $f':A\to [0,1]^{m}$ by
$$f'_{|T^{k}\overline{U}}(T^{k}z)=F(z)|_{k}$$
for every $0\leq k\leq p_{x}-1$ and $z\in\overline{U}$.
Similarly to Case (A), we can check $\|f'-\tilde{f}_{|A}\|_{\infty}<\epsilon$. By Lemma \ref{continuous extension} below,
we extend $f':A\to [0,1]^{m}$ to a continuous function $f:X\to [0,1]^{m}$ with $f_{|A}=f'$ and $\|f-\tilde{f}\|_{\infty}<\epsilon$.

Now we prove $f\in D_{K_{(x,y)}}$. Assume that for some $(x',y')\in K_{(x,y)}=\overline{U}\times T^{l}\overline{U}$, we have $f_{0}^{2d}(x')=f_{0}^{2d}(y')$. In particular,
$$(f(x'),f(Tx'),\dots,f(T^{\tilde{p}_{x}-1}x'))=(f(y'),f(Ty'),\dots,f(T^{\tilde{p}_{x}-1}y')).$$
By the definition of $f$, we know $$F(x')|_{0}^{\tilde{p}_{x}-1}=(f(x'),f(Tx'),\dots,f(T^{\tilde{p}_{x}-1}x')).$$
Note that $T^{-l}y'\in\overline{U}\subset H_{p_{x}}$.
For every $0\leq i\leq\tilde{p}_{x}-1$, we have:
$$f(T^{i}y')=f(T^{i+l}(T^{-l}y'))=f(T^{(i+l) \text{ mod } p_{x}}(T^{-l}y'))=F(T^{-l}y')|_{(i+l) \text{ mod } p_{x}}$$
and thus,
$$F(x')|_{i}=F(T^{-l}y')|_{(i+l) \text{ mod } p_{x}},$$
a contradiction to property (c). This ends the proof of Case (B).

\medskip

The remaining task for Case (B) is to verify Claim \ref{claim3}. In fact, let $\{\psi_{W}\}_{W\in\alpha}$ be a partition of unity subordinate
to $\alpha$; that is, a collection of continuous functions $\psi_{W}:\overline{U}\to[0,1]$ such that
$$\sum_{W\in\alpha}\psi_{W}(x)=1\;\;\text{for all}\;\;x\in\overline{U}$$
and $\supp(\psi_{W})\subset W$, and we can further assume that $\psi_{W}(q_{W})=1$ for all $W\in\alpha$.
For every $W\in\alpha$ we choose a vector $\vec{v}_{W}\in ([0,1]^{m})^{p_{x}}$ such that
$||\vec{v}_{W}-\tilde{v}_{W}||_{\infty}<\epsilon/2$.
Define $F:\overline{U}\to([0,1]^{m})^{p_{x}},\;x\mapsto\sum_{W\in\alpha}\psi_{W}(x)\vec{v}_{W}$. This function $F$ clearly satisfies properties (a) and (b). We claim that for almost every choice of $\vec{v}_{W}$, it satisfies property (c).

For every $x\in\overline{U}$, define $\alpha_{x}=\{W\in\alpha:\psi_{W}(x)>0\}$.
Fix $x',y'\in\overline{U}$.
Write (c)
explicitly as follows:
\begin{align}\label{explicit equality-1-1for periodic point}
&\sum_{U\in\alpha_{x'}}\psi_{U}(x')\left(\vec{v}_{U}|_{0},\vec{v}_{U}|_{1},\dots,\vec{v}_{U}|_{\tilde{p}_{x}-1}\right)-\\
&\sum_{V\in\alpha_{y'}}\psi_{V}(y')\left(\vec{v}_{V}|_{l\text{ mod } p_{x}},\vec{v}_{V}|_{(l+1)\text{ mod } p_{x}},
\dots,\vec{v}_{V}|_{(l+\tilde{p}_{x}-1) \text{ mod } p_{x}}\right)\neq0\nonumber.
\end{align}
Set $\alpha_{x'}=\{U_{1}^{1},\dots,U_{m_{1}}^{1}\}$ and $\alpha_{y'}=\{U_{1}^{2},\dots,U_{m_{2}}^{2}\}$ with $m_{1},m_{2}\leq\ord(\alpha)+1$. Regard the vectors $\vec{v}_{U}|_{0}^{\tilde{p}_{x}-1}, \vec{v}_{V}|_{l \text{ mod } p_{x}}^{(l+\tilde{p}_{x}-1) \text{ mod } p_{x}}\in ([0,1]^{m})^{\tilde{p}_{x}}$ as column vectors having $\tilde{p}_{x}m$ elements. Let $\mathcal{M}$ be the matrix consisting of all the vectors appearing in Equation (\ref{explicit equality-1-1for periodic point}) as follows:
$$\mathcal{M}=
\left(\begin{matrix}
{\vec{v}_{U_{1}^{1}}}|_{0}&\cdots&\vec{v}_{U_{m_{1}}^{1}}|_{0}&\vec{v}_{U_{1}^{2}}|_{l\text{ mod } p_{x}}&\cdots&\vec{v}_{U_{m_{2}}^{2}}|_{l\text{ mod } p_{x}}\\

{\vec{v}_{U_{1}^{1}}}|_{1}&\cdots&\vec{v}_{U_{m_{1}}^{1}}|_{1}&\vec{v}_{U_{1}^{2}}|_{(l+1)\text{ mod } p_{x}}&\cdots&\vec{v}_{U_{m_{2}}^{2}}|_{(l+1)\text{ mod } p_{x}}\\

\vdots&\ddots&\vdots&\vdots&\ddots&\vdots\\

\vec{v}_{U_{1}^{1}}|_{\tilde{p}_{x}-1}&\cdots&\vec{v}_{U_{m_{1}}^{1}}|_{\tilde{p}_{x}-1}&\vec{v}_{U_{1}^{2}}|_{(l+\tilde{p}_{x}-1)\text{ mod } p_{x}}&\cdots& \vec{v}_{U_{m_{2}}^{2}}|_{(l+\tilde{p}_{x}-1)\text{ mod } p_{x}}
\end{matrix}\right).$$
Note that the matrix $\mathcal{M}$ has $\tilde{p}_{x}m$ rows and no more than $\tilde{p}_{x}m+1$ columns, and that satisfies the conditions of Lemma \ref{affinely independent for matrix}.
We get that for almost every choice of $\vec{v}_{U}|_{j}$ and $\vec{v}_{V}|_{(l+j)\text{ mod } p_{x}}$ for all
$0\leq j\leq \tilde{p}_{x}-1$, $U\in\alpha_{x'}$ and $V\in\alpha_{y'}$, the matrix $\mathcal{M}$ depending on $x'$ and $y'$ consists of affinely independent columns. Though different $x'$ and $y'$ may give rise to different matrices, the total number of matrices that arise in this way is finite. Therefore for all $x,y\in\overline{U}$, for almost every choice of $\{\vec{v}_{U}\}_{U\in\alpha}$,
the matrices $\mathcal{M}$ consist of affinely independent columns, which implies
\eqref{explicit equality-1-1for periodic point}. This finishes the proof of Claim \ref{claim3}.

\medskip

\textbf{Case (C). Both $x$ and $y$ are aperiodic and their orbits intersect.}
Without loss of generality, we assume that $y=T^{l}x$ for some integer $l>0$. Since $x$ is aperiodic, $x,Tx,\dots,T^{l+2d}x$ are pairwise distinct. One may select an open neighbourhood $U$ of $x$ in $X\setminus P$ such that $\overline{U},T\overline{U},\dots,T^{l+2d}\overline{U}$ are pairwise disjoint. Set $K_{(x,y)}=\overline{U}\times T^{l}\overline{U}$.
In the following we prove that $D_{K_{(x,y)}}$ is dense in $C(X,[0,1]^{m})$. Let $\epsilon>0$ and $\tilde{f}\in C(X,[0,1]^{m})$.
We will show that there exists $f\in C(X,[0,1]^{m})$ so that $\|f-\tilde{f}\|_{\infty}<\epsilon$ and $f\in D_{K_{(x,y)}}$.

By assumption, $\dim(\overline{U})\leq\dim(X)=d<(2d+1)/{2}$. We can choose a finite open cover $\alpha$ of $\overline{U}$ such that
$$\max_{W\in\alpha,0\leq k\in\leq l+2d}\diam(\tilde{f}(T^{k}W))<\frac{\epsilon}{2},\;\;\;\; \ord(\alpha)<\frac{2d+1}{2}.$$
For each $W\in\alpha$ choose $q_{W}\in W$ so that $\{q_{W}\}_{W\in\alpha}$  is a collection of distinct points in $\overline{U}$ and define $\tilde{v}_{W}=(\tilde{f}(T^{j}q_{W}))_{j=0}^{l+2d}\in ([0,1]^{m})^{2d+l+1}$ (see Figure \ref{jointaperiodic} below).

\begin{figure}[H]
    \centering
    \includegraphics[width=4.0in]{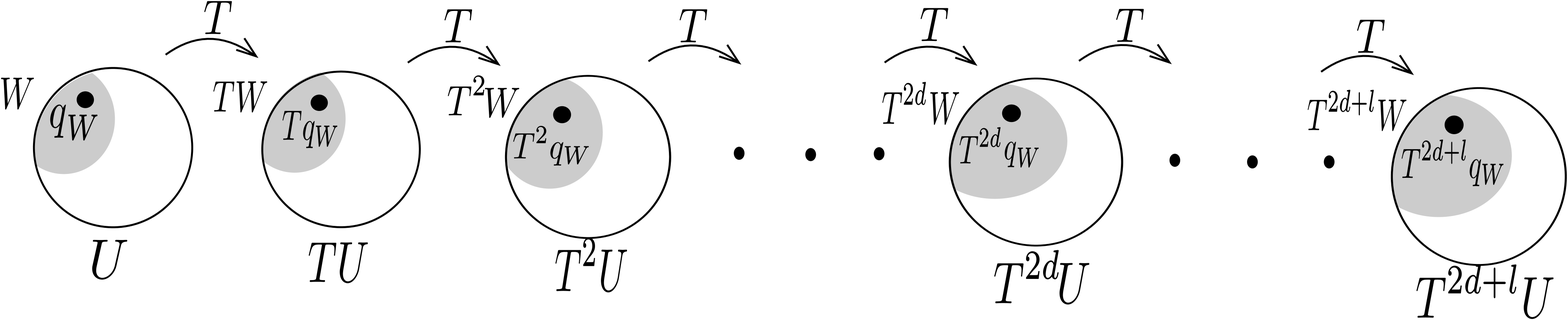}
     \caption{Case (C)}
   \label{jointaperiodic}
\end{figure}

\begin{Claim}\label{claim2}
There is a continuous function $F:\overline{U}\rightarrow ([0,1]^{m})^{l+2d+1}$ with the following properties:
\begin{enumerate}[(i)]
\item for any $W\in\alpha$, $||F(q_{W})-\tilde{v}_{W}||_{\infty}<\epsilon/2$;

\item for any $z\in\overline{U}$, $F(z)\in co\{F(q_{W}):z\in W\in\alpha\}$;

\item if $x',y'\in\overline{U}$, then $F(x')|_{0}^{2d}\neq F(y')|_{l}^{l+2d}$.
\end{enumerate}
\end{Claim}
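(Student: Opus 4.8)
The plan is to mimic the construction in Claims \ref{claim1} and \ref{claim3}: build $F$ from a partition of unity $\{\psi_W\}_{W\in\alpha}$ subordinate to $\alpha$ with $\psi_W(q_W)=1$, set $F(z)=\sum_{W\in\alpha}\psi_W(z)\vec v_W$ for vectors $\vec v_W\in([0,1]^m)^{l+2d+1}$ to be chosen with $\|\vec v_W-\tilde v_W\|_\infty<\epsilon/2$, and verify that for a generic (``almost every'') choice of the $\vec v_W$ property (iii) holds; properties (i) and (ii) are then immediate, since $F(q_W)=\vec v_W$ and $F(z)$ is by construction a convex combination of the $F(q_W)$ over those $W$ with $z\in W$.

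The heart of the matter is property (iii). Fix $x',y'\in\overline U$ and write $\alpha_{x'}=\{W\in\alpha:\psi_W(x')>0\}$ and $\alpha_{y'}=\{W\in\alpha:\psi_W(y')>0\}$; each has at most $\ord(\alpha)+1$ elements. The inequality $F(x')|_0^{2d}\neq F(y')|_l^{l+2d}$ written out is
$$\sum_{U\in\alpha_{x'}}\psi_U(x')\bigl(\vec v_U|_0,\dots,\vec v_U|_{2d}\bigr)-\sum_{V\in\alpha_{y'}}\psi_V(y')\bigl(\vec v_V|_l,\dots,\vec v_V|_{l+2d}\bigr)\neq 0.$$
Here, crucially, because the orbit segment of $x$ has length $l+2d+1$ and is genuinely \emph{aperiodic}, the shifted blocks $\vec v_V|_l^{l+2d}$ involve coordinates $l,\dots,l+2d$ of $\vec v_V$, with \emph{no wraparound modulo} $p_x$ (unlike Case (B)); so the $(2d+1)m$-dimensional column vectors entering the above identity are honest sub-blocks of the independent free parameters $\vec v_W|_j$, $0\le j\le l+2d$. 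Form the matrix $\mathcal M$ whose columns are the vectors $\vec v_U|_0^{2d}$ for $U\in\alpha_{x'}$ and $\vec v_V|_l^{l+2d}$ for $V\in\alpha_{y'}$; it has $(2d+1)m$ rows and at most $2(\ord(\alpha)+1)\le 2\cdot\frac{2d+1}{2}+\text{(slack)} = (2d+1)m$... more precisely at most $(2d+1)m+1$ columns once one uses $\ord(\alpha)<(2d+1)/2$, so $|\alpha_{x'}|+|\alpha_{y'}|\le 2\lceil (2d+1)/2\rceil$, which for $m\ge 1$ is bounded by $(2d+1)m+1$. One then appeals to Lemma \ref{affinely independent for matrix} (exactly as invoked in Case (B)): for almost every choice of the entries, the columns of $\mathcal M$ are affinely independent, and since $\sum_{U}\psi_U(x')-\sum_V\psi_V(y')=0$ while all coefficients $\psi_U(x'),\psi_V(y')$ are strictly positive, affine independence forces the displayed combination to be nonzero. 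Finally, as $x'$ and $y'$ range over $\overline U$ only finitely many distinct subsets $\alpha_{x'},\alpha_{y'}$ (hence finitely many matrices $\mathcal M$) arise, so a single full-measure choice of $\{\vec v_W\}_{W\in\alpha}$ handles all pairs simultaneously, and such a choice can be taken within the $\epsilon/2$-ball around $\{\tilde v_W\}$. This yields $F$ with properties (i)--(iii).

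The main obstacle — and the reason this case is genuinely separate from (A) and (B) — is bookkeeping the dimension count: one must be sure that padding the block length from $2d+1$ up to $l+2d+1$ does not introduce forced linear relations among the columns of $\mathcal M$. Aperiodicity of $x$ is exactly what guarantees $x,Tx,\dots,T^{l+2d}x$ are pairwise distinct, so the coordinates $\vec v_W|_0,\dots,\vec v_W|_{l+2d}$ are all free and independent, and the sub-blocks $|_0^{2d}$ and $|_l^{l+2d}$ overlap in at most $2d+1-l$ coordinates (none if $l>2d$) but are never \emph{identified} the way the cyclic reduction mod $p_x$ identifies them in Case (B); this is what keeps the relevant matrix in the regime covered by Lemma \ref{affinely independent for matrix}. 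Verifying that the hypotheses of that lemma are met — in particular the column-count bound $(2d+1)m+1$ coming from $\ord(\alpha)<(2d+1)/2$ together with $\dim(\overline U)\le d<(2d+1)/2$ — is the one computation that needs care; everything else parallels the earlier claims verbatim.
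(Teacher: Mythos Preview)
Your proposal is correct and follows essentially the same approach as the paper: construct $F$ via a partition of unity with $\psi_W(q_W)=1$, set $F(z)=\sum_W\psi_W(z)\vec v_W$, note that (i) and (ii) are automatic, and for (iii) form the $(2d+1)m\times(m_1+m_2)$ matrix $\mathcal{M}$ of the vectors $\vec v_U|_0^{2d}$ and $\vec v_V|_l^{l+2d}$ and invoke Lemma~\ref{affinely independent for matrix} together with the finiteness of the configurations $(\alpha_{x'},\alpha_{y'})$. Your column-count paragraph is a bit garbled in the middle, but the final bound $|\alpha_{x'}|+|\alpha_{y'}|\le 2(\ord(\alpha)+1)\le 2(d+1)=2d+2\le (2d+1)m+1$ is exactly what the paper uses, and your remark that aperiodicity prevents any ``wraparound'' is precisely why the pattern matrix meets the no-repeat-in-row/column and at-most-twice hypotheses of Lemma~\ref{affinely independent for matrix}.
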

Let us assume Claim \ref{claim2} and complete the proof as follows. Set $A=\bigcup_{i=0}^{l+2d}T^{i}\overline{U}$. Using the fact that
$\overline{U},T\overline{U},\dots,T^{l+2d}\overline{U}$ are pairwise disjoint, we define $f':A\to [0,1]^{m}$ by
$$f'_{|T^{k}\overline{U}}(T^{k}z)=F(z)|_{k}$$
for every $0\leq k\in2d+l$ and $z\in\overline{U}$. Similarly to Case (A), we can check $\|f'-\tilde{f}_{|A}\|_{\infty}<\epsilon$.
By Lemma \ref{continuous extension} below, there exists a continuous function $f:X\to [0,1]^{m}$ such that $f_{|A}=f'$ and
$\|f-\tilde{f}\|_{\infty}<\epsilon$.

Now we prove $f\in D_{K_{(x,y)}}$. Assume that for some $(x',y')\in K_{(x,y)}=\overline{U}\times T^{l}\overline{U}$,
$f_{0}^{2d}(x')=f_{0}^{2d}(y')$. By the definition of $f$, we know $$F(x')|_{0}^{2d}=(f(x'),f(Tx'),\dots,f(T^{2d}x')).$$
For every $0\leq k\leq 2d$, we get
$$f(T^{k}y')=f(T^{k+l}(T^{-l}y'))=F(T^{-l}y')|_{(k+l)}$$
and hence $F(T^{-l}y')|_{l}^{l+2d}=(f(y'),f(Ty'),\dots,f(T^{2d}y'))$. Thus,
$$F(x')|_{0}^{2d}=F(T^{-l}y')|_{l}^{l+2d},$$
a contradiction to property (iii). This ends the proof of Case (C).

\medskip

The remaining task for Case (C) is to verify Claim \ref{claim2}. In fact, let $\{\psi_{W}\}_{W\in\alpha}$ be a partition of unity subordinate
to $\alpha$; that is, a collection of continuous functions $\psi_{W}:\overline{U}\to[0,1]$ such that
$$\sum_{W\in \alpha}\psi_{W}(x)=1\;\;\text{for all}\;\;x\in\overline{U}$$
and $\supp(\psi_{W})\subset W$, and we can further assume that $\psi_{W}(q_{W})=1$ for all $W\in\alpha$.
We choose for every $W\in\alpha$ a vector $\vec{v}_{W}\in ([0,1]^{m})^{l+2d+1}$ such that
$$||\vec{v}_{W}-\tilde{v}_{W}||_{\infty}<\frac{\epsilon}{2}$$
and define $F$ by
$$F(x)=\sum_{W\in\alpha}\psi_{W}(x)\vec{v}_{W}.$$
Note that
$$F(q_{W})=\vec{v}_{W} \;\text{  for all }\; W\in\alpha.$$
This function $F$ clearly satisfies properties (i) and (ii). We claim that for almost every choice of $\vec{v}_{W}$, it satisfies property (iii).

For every $x\in\overline{U}$, define $\alpha_{x}=\{W\in\alpha:\psi_{W}(x)>0\}$. Fix $x',y'\in\overline{U}$. Write property (iii) in Claim \ref{claim2} explicitly as follows:

\begin{equation}\label{explicit equality-1-1aperiodic point}
\sum_{U\in\alpha_{x'}}\psi_{U}(x')\vec{v}_{U}|_{0}^{2d}-
\sum_{V\in\alpha_{y'}}\psi_{V}(y')\vec{v}_{V}|_{l}^{l+2d}\neq0.
\end{equation}
Set $\alpha_{x'}=\{U_{1}^{1},\dots,U_{m_{1}}^{1}\}$ and $\alpha_{y'}=\{U_{1}^{2},\dots,U_{m_{2}}^{2}\}$ with $m_{1},m_{2}\leq\ord(\alpha)+1$. Regard the vectors $\vec{v}_{U}|_{0}^{2d},\vec{v}_{V}|_{l}^{l+2d}\in ([0,1]^{m})^{2d+1}$
as column vectors containing $(2d+1)m$ elements. Let $\mathcal{M}$ be the matrix consisting of all the vectors appearing in Equation \eqref{explicit equality-1-1aperiodic point}:
\begin{equation*}
\mathcal{M}=\left(\begin{matrix}
{\vec{v}_{U_{1}^{1}}}|_{0}&\dots&\vec{v}_{U_{m_{1}}^{1}}|_{0}&\vec{v}_{U_{1}^{2}}|_{l}&\dots&\vec{v}_{U_{m_{2}}^{2}}|_{l}\\
 {\vec{v}_{U_{1}^{1}}}|_{1}&\dots&\vec{v}_{U_{m_{1}}^{1}}|_{1}&\vec{v}_{U_{1}^{2}}|_{l+1}&\dots&\vec{v}_{U_{m_{2}}^{2}}|_{l+1}\\
 \vdots&\ddots&\vdots&\vdots&\ddots&\vdots\\
 \vec{v}_{U_{1}^{1}}|_{2d}&\dots&\vec{v}_{U_{m_{1}}^{1}}|_{2d}&\vec{v}_{U_{1}^{2}}|_{l+2d}&\dots&\vec{v}_{U_{m_{2}}^{2}}|_{l+2d}
 \end{matrix}\right).
\end{equation*}
Note that the matrix $\mathcal{M}$ has $(2d+1)m$ rows and no more than $(2d+1)m+1$ columns.
Using a similar argument to Claim \ref{claim3}, we may complete the proof of Claim \ref{claim2}.

\medskip

\textbf{Part 3.}
The set $(X\times X)\setminus\triangle$ can be written as $C_{1}\cup C_{2}\cup C_{3}$,
where $$C_{1}=(P\times P)\setminus\triangle=(\cup_{m,n\in\mathbb{N}}H_{m}\times H_{n})\setminus\triangle,\;\;\;\;
C_{2}=((X\setminus P)\times (X\setminus P))\setminus\triangle$$
and
$$C_{3}=(P\times (X\setminus P))\cup((X\setminus P)\times P)
=(\cup_{m\in\mathbb{N}}H_{m}\times (X\setminus P))\cup (\cup_{m\in\mathbb{N}}(X\setminus P)\times H_{m}).$$

Fix $m,n\geq1$ and consider $H_{m}\times H_{n}$. Take $(x,y)\in (H_{m}\times H_{n})\setminus\triangle$.
By Cases (A) and (B) of Part 2, there exist open neighbourhoods $U_{x}$ of $x$ in $H_{m}$ and $U_{y}$ of $y$ in $H_{n}$
such that $D_{K_{(x,y)}}$ is dense in $C(X,[0,1]^{m})$ and $K_{(x,y)}\cap\triangle=\emptyset$, where $K_{(x,y)}=\overline{U}_{x}\times\overline{U}_{y}$. The space $X$ is second-countable, so is $X\times X$. Thus, every subspace of $(X\times X)\setminus\triangle$ is a Lindel\"{o}f space. For the open cover $\{U_{x}\times U_{y}:(x,y)\in (H_{m}\times H_{n})\setminus\triangle\}$ of $(H_{m}\times H_{n})\setminus\triangle$ we can find a countable subcover
$\{U_{x_{i}}\times U_{y_{i}}:(x_{i},y_{i})\in (H_{m}\times H_{n})\setminus\triangle,i\in\mathbb{N}\}$.
It follows that $\overline{\mathcal{U}}_{m,n}=\{\overline{U}_{x_{i}}\times\overline{U}_{y_{i}}:i\in\mathbb{N}\}$
is a countable closed cover of $(H_{m}\times H_{n})\setminus\triangle$ and thus
$\overline{\mathcal{U}}=\cup_{m,n\in\mathbb{N}}\overline{\mathcal{U}}_{m,n}$ is a countable closed cover of $C_{1}$.
Similarly, we can find countable closed covers $\overline{\mathcal{V}}$ and $\overline{\mathcal{W}}$ of $C_{2}$ and
$C_{3}$ respectively such that for all $K\in\overline{\mathcal{V}}\cup\overline{\mathcal{W}}$, $D_{K}$ is dense in $C(X,[0,1]^{m})$.

Set $\overline{\mathcal{U}}\cup\overline{\mathcal{V}}\cup\overline{\mathcal{W}}=\{K_{1},K_{2},\dots\}$. This is a countable closed cover of $(X\times X)\setminus\triangle$ and each $D_{K_{i}}$ is open in $C(X,[0,1]^{m})$ by Part 1.
By the Baire category theorem, we know that $\bigcap_{i\in\mathbb{N}}D_{K_{i}}$ is a dense $G_{\delta}$ subset of $C(X,[0,1]^{m})$. Fix $h\in\bigcap_{i\in\mathbb{N}}D_{K_{i}}$. For any pair $(x,y)\in (X\times X)\setminus\triangle$, there exists $i\in\mathbb{N}$ such that $(x,y)\in K_{i}$. It follows from the definition of $D_{K_{i}}$ that $h_{0}^{2d}(x)\neq h_{0}^{2d}(y)$.
Therefore $h_{0}^{2d}:X\to ([0,1]^{m})^{2d+1}$ is an embedding. The proof is complete.
\end{proof}

%\medskip

%We dealt with the case of one observable in Theorem \ref{takens embedding for Z}. The case of several observables follows similarly. We state the result and leave the proof for the reader.

%\begin{Theorem}
%Let $d\in\mathbb{N}\cup\{0\}$ and $l,m\in\mathbb{N}$. Let $X$ be a compact metric space, $T:X\rightarrow X$ a homeomorphism. Assume that $\dim(X)=d$ and $\dim(P_{n})<mnl/2$ for all $0<n\leq2d$. Then it is a generic property that the map given by
%\begin{equation}
%\left(h_{1},\dots,h_{l}\right)_{0}^{2d}:X\to ([0,1]^{m})^{(2d+1)l},\;\;x\mapsto\big(h_{1}(x),\dots,h_{l}(x),
%\dots,h_{1}(T^{2d}x),\dots,h_{l}(T^{2d}x)\big)\label{several observables}
%\end{equation}
%is an embedding, i.e., the set of functions in $C(X,[0,1]^{m})\times\cdots\times C(X,[0,1]^{m})$ for which \eqref{several observables} is an embedding is comeagre w.r.t. supremum topology.
%\end{Theorem}

\medskip

Next we list the lemmas used in the proof of Theorem \ref{takens embedding for Z} and prove two of them.
\begin{Lemma}[{\cite[Lemmas A.6, A.8]{Gut12a}}]\label{affinely independent and linear independent}
Let $m,s\in\mathbb{N}$ and $r\in\mathbb{N}\cup\{0\}$. Suppose that $v_{1},v_{2},\dots,v_{r}\in \mathbb{R}^{m}$ are linearly independent. Then the following hold:
\begin{enumerate}
\item If $r+s\leq m$, then almost surely w.r.t. Lebesgue measure for $(v_{r+1},v_{r+2},\dots,v_{r+s})\in([0,1]^{m})^{s}$, $v_{1},v_{2},\dots,v_{r+s}$ are linearly independent.

\item If $r+s\leq m+1$, then almost surely w.r.t. Lebesgue measure for $(v_{r+1},v_{r+2},\dots,v_{r+s})\in([0,1]^{m})^{s}$,
$v_{1},v_{2},\dots,v_{r+s}$ are affinely independent.
\end{enumerate}
\end{Lemma}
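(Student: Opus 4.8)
The plan is to prove both parts by a single mechanism: realize the set of ``bad'' tuples $(v_{r+1},\dots,v_{r+s})$ as a \emph{proper} real-algebraic subset of $(\mathbb{R}^{m})^{s}$, and then invoke the standard fact that the zero set of a nonzero real polynomial on $\mathbb{R}^{N}$ has Lebesgue measure zero (by induction on $N$); intersecting with the cube $[0,1]^{ms}$ then yields the stated ``almost every'' conclusion.

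For part (1), first I would observe that $v_{1},\dots,v_{r+s}$ are linearly dependent exactly when every $(r+s)\times(r+s)$ minor of the $m\times(r+s)$ matrix with columns $v_{1},\dots,v_{r+s}$ vanishes. Viewing these finitely many minors as polynomials in the coordinates of the free vectors $v_{r+1},\dots,v_{r+s}$ (the entries of $v_{1},\dots,v_{r}$ being fixed constants), the bad set $B$ is their common zero locus, hence real-algebraic. If $B$ is a proper subset of $(\mathbb{R}^{m})^{s}$, then at least one defining minor is a nonzero polynomial $P$, so $B\subseteq\{P=0\}$ is Lebesgue-null, and we are done. So it remains only to exhibit a single good tuple, i.e.\ to complete $v_{1},\dots,v_{r}$ to a linearly independent family in $\mathbb{R}^{m}$ by $s$ further vectors: this is ordinary basis completion, where at the $j$-th step ($1\le j\le s$) the span of the $r+j-1$ vectors chosen so far has dimension $r+j-1\le r+s-1<m$, hence is a proper subspace off which the next vector can be picked.

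For part (2), I would reduce affine independence to linear independence by homogenizing: $v_{1},\dots,v_{r+s}$ are affinely independent iff the vectors $\hat v_{i}:=(v_{i},1)\in\mathbb{R}^{m+1}$ are linearly independent, and the hypothesis that $v_{1},\dots,v_{r}$ are linearly independent in $\mathbb{R}^{m}$ immediately gives that $\hat v_{1},\dots,\hat v_{r}$ are linearly independent in $\mathbb{R}^{m+1}$. Now run the argument of part (1) verbatim with the $(m+1)\times(r+s)$ matrix of columns $\hat v_{1},\dots,\hat v_{r+s}$: its $(r+s)\times(r+s)$ minors are again polynomials in the coordinates of $v_{r+1},\dots,v_{r+s}$, so the bad set is real-algebraic in $(\mathbb{R}^{m})^{s}$, and to see it is proper one extends $v_{1},\dots,v_{r}$ to $r+s$ affinely independent points of $\mathbb{R}^{m}$; this succeeds because at the $j$-th step the affine hull of the $r+j-1$ points already chosen has dimension $r+j-2\le r+s-2<m$ (this is precisely where the bound $r+s\le m+1$, rather than $r+s\le m$, is consumed) and therefore does not exhaust $\mathbb{R}^{m}$.

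There is no serious obstacle; a purely routine alternative is induction on $s$ via Fubini, where the base case is the hypothesis itself together with the remark that linear independence implies affine independence, and the inductive step integrates over $v_{r+s}$, using that for almost every $(v_{r+1},\dots,v_{r+s-1})$ the earlier family is already independent so its span (resp.\ affine hull) is a proper subspace of $\mathbb{R}^{m}$ of measure zero, whence almost every $v_{r+s}$ avoids it (here $B$ is closed, hence Borel, so Fubini applies). The only points demanding a little care are the affine-dimension bookkeeping just mentioned, the verification that homogenization preserves linear independence of the first $r$ vectors, and the standard implication ``proper real-algebraic set $\Rightarrow$ Lebesgue-null''.
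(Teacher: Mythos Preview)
The paper does not actually prove this lemma: it is stated with a citation to \cite[Lemmas A.6, A.8]{Gut12a} and the surrounding text says ``Next we list the lemmas used in the proof of Theorem \ref{takens embedding for Z} and prove two of them,'' after which only Lemmas \ref{affinely independent for matrix} and \ref{continuous extension} receive proofs. So there is no in-paper argument to compare against.

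That said, your proposal is correct and is exactly the kind of argument one would expect the cited reference to contain. Both mechanisms you describe work: the real-algebraic one (bad set is the common zero locus of finitely many minor polynomials, and properness follows from basis/affine-basis completion using $r+s\le m$ resp.\ $r+s\le m+1$) and the Fubini induction on $s$ (at each step the span or affine hull of the already-chosen vectors is a proper affine subspace of $\mathbb{R}^{m}$, hence Lebesgue-null). The homogenization $v\mapsto(v,1)$ for part (2) is the standard reduction, and your check that linear independence of $v_{1},\dots,v_{r}$ in $\mathbb{R}^{m}$ forces linear independence of $(v_{1},1),\dots,(v_{r},1)$ in $\mathbb{R}^{m+1}$ is immediate. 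No gaps.
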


\begin{Lemma}[Cf. {\cite[Lemma 5.5]{L99}}]\label{affinely independent for matrix}
Let $k,l,r$ be positive integers with $k\geq\max\{l,2\}$. Let $M=(M(i,j))_{i,j}$ be a $(k-1)\times l$ matrix with
$\{1,2,\dots,r\}=\{M(i,j):1\leq i\leq k-1,1\leq j\leq l\}$ such that no value appears twice in any row and in any column
and a value appears at most twice in $M$. Then for almost all $t_{1},t_{2},\dots,t_{r}\in[0,1]$,
the column vectors of the following matrix
$$A(t_{1},t_{2},\dots,t_{r}):=(t_{M(i,j)})_{i,j}$$
are affinely independent.
\end{Lemma}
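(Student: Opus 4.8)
The plan is to convert the affine‑independence statement into the assertion that a single polynomial determinant is not identically zero, and then prove that non‑vanishing by induction on the number of columns $l$. For $t=(t_{1},\dots,t_{r})$, let $c_{1},\dots,c_{l}\in\mathbb{R}^{k-1}$ be the columns of $A(t)$ and let $B(t)$ be the $k\times l$ matrix obtained from $A(t)$ by prepending a row of $1$'s. By the definition of affine independence, $c_{1},\dots,c_{l}$ are affinely independent if and only if $B(t)$ has full column rank $l$ (here we use $k\ge l$), equivalently some $l\times l$ minor of $B(t)$ is nonzero. Each such minor is a polynomial in $t_{1},\dots,t_{r}$, so the set of \emph{bad} parameters — those at which every $l\times l$ minor of $B$ vanishes — is contained in the zero set of one nonzero polynomial, hence Lebesgue‑null in $\mathbb{R}^{r}$ and a fortiori null in $[0,1]^{r}$, \emph{provided} we can exhibit one $l\times l$ submatrix $B'$ of $B$ with $\det B'\not\equiv0$. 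Thus the lemma reduces to the combinatorial claim: for every $M$ satisfying the hypotheses, some $l\times l$ submatrix of $B=\binom{\mathbf{1}^{\top}}{A}$ has determinant not identically zero. Note that these hypotheses are inherited when one deletes an arbitrary set of rows and an arbitrary set of columns of $M$ (relabelling the surviving values as $1,\dots,r'$), as long as the resulting dimensions still satisfy ``$\#\text{rows}\ge\max\{\#\text{cols}-1,1\}$''; I would check this at each application.

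I would prove the claim by induction on $l$. If $l=1$ there is nothing to prove, since a single vector is always affinely independent. If $l=2$, affine independence of $c_{1},c_{2}$ just means $c_{1}\ne c_{2}$; because no value occurs twice in a row, $M(1,1)\ne M(1,2)$, so the $2\times2$ minor of $B$ formed by the row of $1$'s and the first row of $A$ equals $t_{M(1,2)}-t_{M(1,1)}$, which is not the zero polynomial.

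For the inductive step assume $l\ge3$ and that the claim holds for all valid matrices with fewer than $l$ columns. Fix any value $v$, and let $(i_{1},j_{1})$ — and, in case $v$ occurs twice, also $(i_{2},j_{2})$, where the hypotheses force $i_{1}\ne i_{2}$ and $j_{1}\ne j_{2}$ — be its position(s) in $M$. Since $A$ has $k-1\ge l-1$ rows, pick a set $R$ of $l-1$ rows of $A$ containing every row in which $v$ occurs, and let $B'$ be the $l\times l$ submatrix of $B$ with rows $R$ together with the row of $1$'s, and with all $l$ columns. Now view $\det B'$ as a polynomial in the single variable $t_{v}$, the remaining entries held formal: since $t_{v}$ occurs in $B'$ only at its (one or two) positions, $\det B'$ has degree $d_{v}\in\{1,2\}$ in $t_{v}$, and the coefficient of $t_{v}^{d_{v}}$ equals, up to a fixed sign, the determinant of the square matrix $B''$ obtained from $B'$ by deleting the row(s) and column(s) of $v$. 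But $B''$ is exactly the augmented matrix $\binom{\mathbf{1}^{\top}}{A''}$ of the index matrix $M''$ obtained by restricting $M$ to the surviving rows and columns, which is again valid and has $l-d_{v}<l$ columns (when $l=3$ and $d_{v}=2$ this $B''$ is the $1\times1$ matrix $(1)$, with determinant $1$); by the induction hypothesis $\det B''\not\equiv0$, and it involves no $t_{v}$. Hence the $t_{v}^{d_{v}}$‑coefficient of $\det B'$ is a nonzero polynomial in the variables other than $t_{v}$, so it cannot be cancelled by the lower‑order terms in $t_{v}$, and therefore $\det B'\not\equiv0$; this proves the claim and hence the lemma.

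The main obstacle — and the reason the statement is not a one‑line determinant computation — is that, because values may repeat, a naive Leibniz or cofactor expansion of a determinant built from $A$ produces terms that can coincide as monomials and cancel with opposite signs. Isolating the top power of a single variable $t_{v}$ circumvents this: its coefficient is automatically free of such cancellations, and it is itself (up to sign) a determinant of precisely the same shape but with strictly fewer columns, which is exactly what makes the induction go through. The only delicate bookkeeping is to verify in each subcase that the restricted index matrix $M''$ still satisfies the hypotheses (in particular the dimension inequality), and to treat the degenerate $l=3$ instance by hand.
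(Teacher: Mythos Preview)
Your argument is correct and follows essentially the same inductive scheme as the paper's proof: reduce affine independence to the non-vanishing of a determinant obtained by augmenting $A(t)$ with an extra row, then expand that determinant in a single variable $t_v$ and observe that its top coefficient is, up to sign, a determinant of the same shape with strictly fewer columns. The paper packages this slightly differently---it first pads $M$ with extra columns so that $l=k$, appends a row of a fresh variable $t_{r+1}$ rather than a row of $1$'s, and splits into two cases (no repeated value versus some repeated value, the former dispatched by Lemma~\ref{affinely independent and linear independent})---whereas you work directly with a chosen $l\times l$ submatrix of $\binom{\mathbf{1}^{\top}}{A}$ and need no case distinction; this is a mild streamlining but not a different idea.
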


\begin{proof}
By enlarging $r$, we may add to the matrix $k-l$ columns on the right such that the elements of the new columns are pairwise distinct and distinct from the elements appearing in the original matrix $M$. We will thus prove the lemma under the assumption that $M$ is of dimension $(k-1)\times k$. By Fubini's theorem, the statement of the lemma for the original matrix will follow.

We prove this lemma by induction on $k$. For $k=2$, without loss generality, we set $M=[1, 2]$. It is clear that for all $t_{1}\neq t_{2}\in\mathbb{R}$ (thus for almost all $t_{1},t_{2}\in[0,1]$), the column vectors $t_{1}$ and $t_{2}$ of $A(t_{1},t_{2})=[t_{1},t_{2}]$ are affinely independent. Assume that the result holds for $k\geq2$. Now we prove the case $k+1$. We have two cases in the following.

\textbf{Case 1.} No element of $M$ appears twice in $M$. Set $A(t_{1},t_{2},\dots,t_{r})=[\vec{a}_{1},\dots,\vec{a}_{k}]$, where $\vec{a}_{i}=(t_{M(j,i)})_{j=1}^{k-1}$ for $1\leq i\leq k$. By Lemma \ref{affinely independent and linear independent} (2), for almost all $\vec{a}_{1},\dots,\vec{a}_{k}\in[0,1]^{k-1}$ (hence for almost all $t_{1},\dots,t_{r}\in[0,1]$), the column vectors of $A(t_{1},\dots,t_{r})$ are affinely independent.

\textbf{Case 2.}
There exist $1\leq i_{0}\leq k-1$ and $1\leq j_{0}\leq k$ such that $M(i_{0},j_{0})$ appears exactly twice in the matrix $M$.
Without loss of generality, we assume that $M(i_{0},j_{0})=1$. We add $(r+1,r+1,\dots,r+1)$ as the $k$-th row of the matrix $M$ and denote the new matrix by $N=(N(i,j))_{i,j}$. To prove that for almost all $t_{1},\dots,t_{r}\in[0,1]$ the column vectors of the following matrix
$$A(t_{1},\dots,t_{r})=(t_{M(i,j)})_{i,j}$$
are affinely independent, it is equivalent to prove that for almost all $t_{1},\dots,t_{r+1}\in[0,1]$, the following matrix
$$B(t_{1},\dots,t_{r+1}):=(t_{N(i,j)})_{i,j}$$
has nonzero determinant, i.e., $\det(B(t_{1},\dots,t_{r+1}))\neq 0$. By a simple calculation, we get that
$$\det(B(t_{1},\dots,t_{r+1}))=f_{2}(t_{2},\dots,t_{r+1})t_{1}^{2}+f_{1}(t_{2},\dots,t_{r+1})t_{1}+f_{0}(t_{2},\dots,t_{r+1}),$$
where $f_{2}$ is, up to sign, the determinant of the minor of $B(t_{1},\dots,t_{r+1})$ that remains after throwing away all columns and rows in which $t_{1}$ appears (actually only two rows and two columns) or $1$ if no row is left.
In the first case, the minor thus formed is a $(k-2)\times(k-2)$ matrix that also satisfies the assumptions of the lemma.
By induction, for almost all $t_{2},\dots,t_{r+1}\in[0,1]$, $f_{2}(t_{2},\dots,t_{r+1})\neq0$. Therefore we conclude that for almost all $t_{1},\dots,t_{r+1}\in[0,1]$, $\det(B(t_{1},\dots,t_{r+1}))\neq 0$. This ends the proof.
\end{proof}

\begin{Lemma}[{\cite[Lemma A.5]{Gut12a}}]\label{continuous extension}
Let $\epsilon>0$, $m\in\mathbb{N}$ and $B$ be a closed subset of a compact metric space $X$.
Let $f':B\to[0,1]^{m}$ and $\tilde{f}:X\to[0,1]^{m}$ be continuous functions such that
$||f'-\tilde{f}_{|_{B}}||_{\infty}<\epsilon$.
Then there exists a continuous function $f:X\to[0,1]^{m}$ such that
$f_{|_{B}}=f'$ and $||f-\tilde{f}||_{ \infty}<\epsilon$.
\end{Lemma}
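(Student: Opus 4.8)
The plan is to reduce the statement to the classical Tietze extension theorem applied coordinate-wise, followed by composition with a Lipschitz retraction onto the cube $[0,1]^m$ to repair the range.

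First, since $B$ is compact and $f'-\tilde f_{|B}$ is continuous, the supremum $\epsilon':=\|f'-\tilde f_{|B}\|_{\infty}$ is attained and satisfies $\epsilon'<\epsilon$. Write $g:=f'-\tilde f_{|B}:B\to\mathbb{R}^{m}$ and denote its coordinate functions by $g_{1},\dots,g_{m}:B\to[-\epsilon',\epsilon']$. Each $g_{i}$ is continuous on the closed subset $B$ of the compact metric (hence normal) space $X$, so by the Tietze extension theorem each $g_{i}$ extends to a continuous $G_{i}:X\to[-\epsilon',\epsilon']$; recall that Tietze permits prescribing the same closed bounded interval as the range of the extension. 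Setting $G:=(G_{1},\dots,G_{m}):X\to\mathbb{R}^{m}$, we obtain a continuous map with $G_{|B}=g$ and $\|G(x)\|_{\infty}=\max_{i}|G_{i}(x)|\leq\epsilon'$ for all $x\in X$.

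Next define $\hat f:=\tilde f+G:X\to\mathbb{R}^{m}$. This is continuous, satisfies $\hat f_{|B}=\tilde f_{|B}+g=f'$, and $\|\hat f-\tilde f\|_{\infty}=\|G\|_{\infty}\leq\epsilon'$. The only defect is that $\hat f$ need not take values in $[0,1]^{m}$. To correct this, let $\rho:\mathbb{R}^{m}\to[0,1]^{m}$ be the coordinate-wise truncation $\rho(t_{1},\dots,t_{m})=\bigl(\max\{0,\min\{1,t_{i}\}\}\bigr)_{i=1}^{m}$; each coordinate map is $1$-Lipschitz, hence $\rho$ is $1$-Lipschitz for $\|\cdot\|_{\infty}$, and $\rho$ restricts to the identity on $[0,1]^{m}$. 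Put $f:=\rho\circ\hat f:X\to[0,1]^{m}$. Then $f$ is continuous; for $x\in B$ we have $\hat f(x)=f'(x)\in[0,1]^{m}$, so $f(x)=f'(x)$, i.e.\ $f_{|B}=f'$; and for every $x\in X$, using $\tilde f(x)\in[0,1]^{m}$ and that $\rho$ fixes it,
\[
\|f(x)-\tilde f(x)\|_{\infty}=\|\rho(\hat f(x))-\rho(\tilde f(x))\|_{\infty}\leq\|\hat f(x)-\tilde f(x)\|_{\infty}\leq\epsilon'<\epsilon ,
\]
whence $\|f-\tilde f\|_{\infty}<\epsilon$, as required.

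The argument is entirely routine; the only point requiring a little care is keeping the extension \emph{simultaneously} inside the cube $[0,1]^{m}$ and inside the prescribed $\epsilon$-neighbourhood of $\tilde f$. Both are secured by working with a truncation retraction that is $1$-Lipschitz in the supremum norm and fixes $[0,1]^{m}$ pointwise, so that composing with it can only decrease distances to $\tilde f$. Note that compactness of $X$ is not actually used beyond normality, though it is available here.
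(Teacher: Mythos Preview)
Your proof is correct and follows essentially the same approach as the paper: the paper's proof simply invokes the Tietze extension theorem (with the range-preserving version) and leaves the details to the reader, while you have carefully spelled out the coordinate-wise extension of the difference $f'-\tilde f_{|B}$ and the $1$-Lipschitz retraction onto $[0,1]^m$ needed to land back in the cube without spoiling the $\epsilon$-estimate.
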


\begin{proof}
This is an easy application of the Tietze extension theorem
\cite[Theorem 35.1]{munkres2000topology}:
Let $X$ be a compact metric space, $A$ a closed subset of $X$,
and $f:A\to\mathbb{R}$ a continuous function carrying the standard topology,
then there exists a continuous function $F: X\to\mathbb{R}$ such that $F(a)=f(a)$ for any $a\in A$;
moreover, $F$ can be chosen such that $\sup\{|f(a)|:a\in A\}=\sup\{|F(x)|:x\in X\}$.
\end{proof}

\section{An embedding result involving Rokhlin dimension}
In the remaining sections, we fix a positive integer $k$. Let $(X,\mathbb{Z}^{k})$ be a $\mathbb{Z}^{k}$-action, where $X$ is a compact metric space equipped with a metric $d$. For any $\epsilon>0$, the number $\widim_\epsilon(X,d)$ is defined as the smallest number $n\in\mathbb{N}$ such that there exists a finite open cover $\mathcal{U}$ of $X$ whose mesh is at most $\epsilon$, and whose order is $n$. The \textbf{mean dimension} of $(X,\mathbb{Z}^{k})$ is defined as
\begin{equation}\label{mdim}
\mdim(X)=\sup_{\epsilon>0}\lim_{n\to\infty}\frac{\widim_\epsilon(X,d_{[n]})}{n^{k}},
\end{equation}
where $[n]=\{0,\dots,n-1\}^{k}\subset\mathbb{Z}^{k}$ and $d_{[n]}(x,y)=\max_{g\in[n]}d(gx,gy)$ for any positive integer $n$.
The limit on the right side of \eqref{mdim} exists due to the Ornstein--Weiss lemma \cite{ornstein1987entropy}
(see also \cite[Theorem 6.1]{LW} for a detailed proof). It is well known that $\mdim(([0,1]^{m})^{\mathbb{Z}^{k}},\sigma)=m$
\cite[Proposition 3.3]{LW}.

For a continuous function $f:X\to [0,1]^m$, the induced map
$$I_f:X\to ([0,1]^m)^{\mathbb{Z}^{k}}\quad\text{given by}\quad I_f(x)=(f(wx))_{w\in\mathbb{Z}^k}$$
is continuous and equivariant with respect to $(X,\mathbb{Z}^k)$ and $(([0,1]^{m})^{\mathbb{Z}^{k}},\sigma)$.

Let $Y$ be a topological space. For $\epsilon>0$, a continuous map $f:X\to Y$ is called an \textbf{$\epsilon$-embedding} if $d(x,y)<\epsilon$ whenever $f(x)=f(y)$. The following lemma plays a significant role in the proof of Theorem \ref{embedding general}.
\begin{Lemma}[{\cite[Lemma 2.1]{GutTsu12}}] \label{Gutman widim}
Let $(X,d)$ be a compact metric space and $f :X\to [0,1]^m$ a continuous map. Suppose that the numbers $\delta,\epsilon>0$ satisfy the implication
$$d(x,y)<\epsilon\quad\implies\quad\|f(x)-f(y)\|_\infty <\delta.$$
If $\widim_\epsilon(X,d) < m/2$, then there exists an $\epsilon$-embedding $g: X\to [0,1]^m$ satisfying
$$\sup_{x\in X} \|f(x)-g(x)\|_\infty <\delta.$$
\end{Lemma}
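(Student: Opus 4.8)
The plan is to realise $g$ as a generic affine thickening of the nerve map of a fine open cover, following the classical argument that an $n$-dimensional compact metric space $\epsilon$-embeds into $[0,1]^{2n+1}$. Since $\widim_\epsilon(X,d)<m/2$, I would first fix a finite open cover $\mathcal U=\{U_1,\dots,U_N\}$ of $X$ all of whose members have diameter strictly less than $\epsilon$ and with $\ord(\mathcal U)=n<m/2$; note that then $2(n+1)\le m+1$. Choose a partition of unity $\{\psi_i\}_{i=1}^N$ subordinate to $\mathcal U$, i.e.\ continuous $\psi_i\colon X\to[0,1]$ with $\supp\psi_i\subset U_i$ and $\sum_i\psi_i\equiv 1$, pick $q_i\in U_i$ for each $i$, and put $\tilde v_i=f(q_i)\in[0,1]^m$. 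Since $\supp\psi_i$ is compact and $d(x,q_i)\le\diam(U_i)<\epsilon$ for every $x\in\supp\psi_i$, the hypothesis $d(x,y)<\epsilon\Rightarrow\|f(x)-f(y)\|_\infty<\delta$ yields $\delta':=\max_{1\le i\le N}\sup\{\|f(x)-\tilde v_i\|_\infty: x\in\supp\psi_i\}<\delta$.

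The only real choice is that of the vectors $v_1,\dots,v_N\in[0,1]^m$. I would take them within $\delta-\delta'$ of the $\tilde v_i$ in the sup norm and in \emph{general position}, meaning: for every pair of disjoint subsets $\alpha,\beta\subset\{1,\dots,N\}$ that arise as $\alpha=\{i:\psi_i(x)>0\}$ and $\beta=\{j:\psi_j(y)>0\}$ for some $x,y\in X$, the vectors $\{v_i:i\in\alpha\cup\beta\}$ are affinely independent. Only finitely many such pairs occur, and $|\alpha\cup\beta|\le 2(\ord(\mathcal U)+1)\le m+1$; hence Lemma \ref{affinely independent and linear independent}(2) (applied with $r=0$) shows that for each such pair the exceptional set of $(v_i)_{i\in\alpha\cup\beta}$ has Lebesgue measure zero. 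A finite union of null sets is null, so a full-measure — hence dense — set of tuples $(v_1,\dots,v_N)\in([0,1]^m)^N$ has the desired property, and a valid choice can be made arbitrarily close to $(\tilde v_1,\dots,\tilde v_N)$ and inside $([0,1]^m)^N$.

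With the $v_i$ fixed, set $g(x)=\sum_{i=1}^N\psi_i(x)\,v_i$. Then $g$ is continuous and, by convexity of $[0,1]^m$, takes values there. Since $\sum_i\psi_i\equiv 1$ we have $f(x)-g(x)=\sum_i\psi_i(x)(f(x)-v_i)$, so $\|f(x)-g(x)\|_\infty\le\max_{\psi_i(x)>0}\|f(x)-v_i\|_\infty\le\delta'+\max_i\|\tilde v_i-v_i\|_\infty<\delta$ for all $x$, using that $\psi_i(x)>0$ forces $x\in\supp\psi_i$ and hence $\|f(x)-\tilde v_i\|_\infty\le\delta'$. To see $g$ is an $\epsilon$-embedding, suppose $g(x)=g(y)$ and set $\alpha=\{i:\psi_i(x)>0\}$, $\beta=\{j:\psi_j(y)>0\}$; then $\sum_{i\in\alpha}\psi_i(x)v_i-\sum_{j\in\beta}\psi_j(y)v_j=0$ with coefficient sum $1-1=0$. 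If $\alpha\cap\beta=\emptyset$ this contradicts the affine independence of $\{v_i:i\in\alpha\cup\beta\}$, since not all coefficients vanish ($\psi_i(x)>0$ for $i\in\alpha\ne\emptyset$); hence some $i_0\in\alpha\cap\beta$, so $x,y\in\supp\psi_{i_0}\subset U_{i_0}$ and $d(x,y)\le\diam(U_{i_0})<\epsilon$, as required.

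The step I expect to be the crux is the general-position argument: one must check that the combinatorial bound $|\alpha\cup\beta|\le 2(\ord(\mathcal U)+1)\le m+1$ lines up exactly with the hypothesis $r+s\le m+1$ of Lemma \ref{affinely independent and linear independent}(2), and that only finitely many affine-independence constraints are imposed (one per ordered pair of carrier sets), so that the union of exceptional null sets stays null. A smaller but genuine technical point, which I would dispose of at the outset, is the passage from ``$\widim_\epsilon(X,d)<m/2$'' — phrased via covers of mesh at most $\epsilon$ — to a cover by open sets of diameter \emph{strictly} less than $\epsilon$; this strictness is precisely what makes both $\delta'<\delta$ and the final inequality $d(x,y)<\epsilon$ work, and is handled either by the convention for $\widim_\epsilon$ used in the mean-dimension literature or by a standard reduction.
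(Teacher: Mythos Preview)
The paper does not actually prove this lemma; it is quoted from \cite[Lemma~2.1]{GutTsu12} and used as a black box in the proof of Theorem~2. So there is no ``paper's own proof'' to compare against. That said, your argument is correct and is precisely the standard nerve-map/general-position proof of this lemma (and of its prototype in \cite{LW,L99}); indeed it is essentially the same mechanism the paper itself deploys in the proof of Theorem~\ref{takens embedding for Z} (partition of unity, choice of vectors $\vec v_W$ close to sampled values, affine independence via Lemma~\ref{affinely independent and linear independent}).

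Two small remarks. First, rather than restricting to the finitely many disjoint pairs $(\alpha,\beta)$ that actually occur as carrier sets, it is slightly cleaner to demand that \emph{every} subcollection of at most $m+1$ of the $v_i$ be affinely independent; this is still a finite union of null constraints and removes any need to enumerate realised carriers. Second, you correctly flag the strict-versus-nonstrict mesh issue: the paper's definition of $\widim_\epsilon$ uses ``mesh at most $\epsilon$'', so strictly speaking one only gets $d(x,y)\le\epsilon$ at the end. The standard fix---which you allude to---is to note that any finite open cover of a compact metric space admits a shrinking to a closed cover and then a further open swelling of strictly smaller diameter, so one may always pass to a cover of the same order with $\diam(U_i)<\epsilon$; alternatively one proves the lemma with $\le\epsilon$ in the conclusion, which is all that the application in Theorem~\ref{embedding general} needs.
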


\begin{Theorem}[=Theorem 2]\label{embedding general}
Let $D\in\mathbb{N}\cup\{0\}$ and $L\in\mathbb{N}$. Let $(X,\mathbb{Z}^{k},T)$ be an extension of $(Y, \mathbb{Z}^{k},S)$ with the factor map $\pi:X\to Y$.  Assume that $\dim_{Rok}(Y,\mathbb{Z}^{k})=D$ and $\mdim(X)<L/2$.
Then the set of functions
\[
\left\{f\in C(X,[0,1]^{(D+1)L}): I_f\times\pi~\text{is an embedding} \right\}
\]
is a dense $G_\delta$ subset of $C(X,[0,1]^{(D+1)L})$. In particular, there exists an embedding from $(X,\mathbb{Z}^{k},T)$ into $\bigl(([0,1]^{(D+1)L})^{\mathbb{Z}^k}\times Y, \sigma \times S \bigl)$.
\end{Theorem}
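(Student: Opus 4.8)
The plan is to invoke the Baire category theorem and reduce the assertion to a density statement proved ``tower by tower'' via Lemma \ref{Gutman widim}. Since $\pi$ is continuous, $R=\{(x,x')\in X\times X:\pi(x)=\pi(x')\}$ is closed in $X\times X$, and because $X$ is compact, $I_f\times\pi$ is an embedding exactly when $I_f(x)\neq I_f(x')$ for all $(x,x')\in R\setminus\triangle$ (here $\triangle$ is the diagonal). Writing $R\setminus\triangle=\bigcup_{N\ge1}K_N$ with $K_N=R\cap\{d(x,x')\ge1/N\}$ compact, and putting, for each compact $K\subseteq(X\times X)\setminus\triangle$,
\[
\mathcal E_K=\left\{f\in C(X,[0,1]^{(D+1)L}):I_f(x)\neq I_f(x')\ \text{for all}\ (x,x')\in K\right\},
\]
the set in the statement equals $\bigcap_{N\ge1}\mathcal E_{K_N}$. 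A compactness argument in the spirit of Part 1 of the proof of Theorem \ref{takens embedding for Z} shows each $\mathcal E_K$ is open: if no uniform separation existed, a convergent subsequence of ``bad'' pairs would yield a pair in $K$ on which $I_f$ fails to be injective. So it suffices to prove every $\mathcal E_K$ is dense.

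Fix a compact $K\subseteq(X\times X)\setminus\triangle$, a function $f_0=(f_0^{(0)},\dots,f_0^{(D)})\in C(X,[0,1]^{(D+1)L})$ written in $D+1$ coordinate blocks of size $L$, and $\eta>0$. Put $\rho_0=\min_{(x,x')\in K}d(x,x')>0$ and, using uniform continuity, shrink it to $\rho\in(0,\rho_0]$ with $d(a,b)<\rho\Rightarrow\|f_0^{(i)}(a)-f_0^{(i)}(b)\|_\infty<\eta$ for all $i$. Since $\mdim(X)<L/2$, formula \eqref{mdim} gives $\lim_{n\to\infty}\widim_\rho(X,d_{[n]})/n^k<L/2$, so fix $n$ with $\widim_\rho(X,d_{[n]})<Ln^k/2$. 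Applying $\dim_{Rok}(Y,\mathbb{Z}^k)=D$ at this $n$ produces open $V_0,\dots,V_D\subseteq Y$ with $\{g\overline{V_i}\}_{g\in[n]}$ pairwise disjoint and $\bigcup_{i=0}^{D}\bigcup_{g\in[n]}gV_i=Y$; pulling back, $U_i=\pi^{-1}(V_i)$ is open, $B_i=\pi^{-1}(\overline{V_i})$ is closed, $\{gB_i\}_{g\in[n]}$ is pairwise disjoint for each fixed $i$, and $\bigcup_{i=0}^{D}\bigcup_{g\in[n]}gU_i=X$.

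Now dedicate the $i$-th block to the $i$-th tower. Let $T_i:B_i\to([0,1]^L)^{[n]}\cong[0,1]^{Ln^k}$, $T_i(z)=(f_0^{(i)}(gz))_{g\in[n]}$; by the choice of $\rho$, $d_{[n]}(z,z')<\rho\Rightarrow\|T_i(z)-T_i(z')\|_\infty<\eta$, and $\widim_\rho(B_i,d_{[n]})\le\widim_\rho(X,d_{[n]})<Ln^k/2$ (restriction of covers). By Lemma \ref{Gutman widim} applied to $(B_i,d_{[n]})$ with $\epsilon=\rho$ and $\delta=\eta$, there is a $\rho$-embedding $\phi_i:B_i\to([0,1]^L)^{[n]}$ with $\sup_{B_i}\|\phi_i-T_i\|_\infty<\eta$. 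As $\{gB_i\}_{g\in[n]}$ are disjoint closed sets, $f^{(i)}(gz):=\phi_i(z)|_g$ defines a continuous map on the closed set $\bigcup_{g\in[n]}gB_i$ lying within $\eta$ of $f_0^{(i)}$; extend it by Lemma \ref{continuous extension} to $f^{(i)}\in C(X,[0,1]^L)$ with $\|f^{(i)}-f_0^{(i)}\|_\infty<\eta$, and set $f=(f^{(0)},\dots,f^{(D)})$, so $\|f-f_0\|_\infty<\eta$. To see $f\in\mathcal E_K$: given $(x,x')\in K$, choose $i$ and $g\in[n]$ with $x\in gU_i$; then $\pi(x)\in gV_i$ and $g^{-1}x\in U_i\subseteq B_i$, while $\pi(x')=\pi(x)\in gV_i$ forces $x'\in gU_i$, so $g^{-1}x'\in U_i\subseteq B_i$. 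Because $g\in[n]$ we have $d_{[n]}(g^{-1}x,g^{-1}x')\ge d(x,x')\ge\rho$, so the $\rho$-embedding property of $\phi_i$ gives $\phi_i(g^{-1}x)\neq\phi_i(g^{-1}x')$, hence $f^{(i)}(hg^{-1}x)\neq f^{(i)}(hg^{-1}x')$ for some $h\in[n]$, and therefore $I_f(x)\neq I_f(x')$. This establishes density, so $\bigcap_N\mathcal E_{K_N}$ is a dense $G_\delta$; it is in particular nonempty, and any $f$ in it makes $x\mapsto(I_f(x),\pi(x))$ an equivariant embedding of $(X,\mathbb{Z}^k,T)$ into $\bigl(([0,1]^{(D+1)L})^{\mathbb{Z}^k}\times Y,\sigma\times S\bigr)$.

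The main obstacle I anticipate is the parameter bookkeeping in the density step: before one can call Lemma \ref{Gutman widim}, the separation gap $\rho_0$ on $K$, the modulus of continuity of the target $T_i$, and the scale at which $\widim$ is controlled by $\mdim(X)<L/2$ must all be reconciled into a single $\rho$; and one must verify that assigning a separate block of $L$ coordinates to each of the $D+1$ overlapping Rokhlin towers lets the constructions on different towers coexist while their union still catches every pair of $K$. The remaining points — openness of $\mathcal E_K$, the Tietze-type extension, and equivariance of $I_f\times\pi$ — are routine.
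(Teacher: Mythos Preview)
Your proof is correct and follows essentially the same strategy as the paper: reduce to a countable intersection of open dense sets, and for density use the Rokhlin towers on the factor together with Lemma~\ref{Gutman widim} applied block by block, then extend via Lemma~\ref{continuous extension}. The only cosmetic difference is the $G_\delta$ decomposition: the paper works with the sets $A_\eta=\{f:I_f\times\pi\ \text{is an}\ \eta\text{-embedding}\}$, whereas you slice the off-diagonal of the fibre-product relation $R$ into compact pieces $K_N$ and show each $\mathcal E_{K_N}$ is open and dense; your separation parameter $\rho\le\rho_0=\min_{K}d(x,x')$ plays exactly the role of the paper's $\epsilon\le\eta$.
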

\begin{proof}
For each $\eta>0$, define
\[
A_\eta = \{f\in C(X,[0,1]^{(D+1)L}): I_f\times\pi~\text{is an}~\eta\text{-embedding} \}.
\]
Our assertion amounts to showing that the intersection $\bigcap_{\eta>0} A_\eta$ is a dense $G_\delta$ subset of
$C(X,[0,1]^{(D+1)L})$ with respect to the $\|\cdot\|_\infty$-norm. Each $A_\eta$ is obviously open, and moreover this intersection coincides with the countable intersection $\bigcap_{n\in \mathbb{N}} A_{1/n}$. By the Baire category theorem, it thus suffices to show that each set $A_\eta$ is dense.

From now on, let $\eta>0$ and $\delta>0$ be fixed. Let $f\in C(X,[0,1]^{(D+1)L})$. We will show that there is $g\in A_\eta$ with $\|f(x)-g(x)\|_\infty<\delta$ for all $x\in X$.

Under obvious identifications, express the function $f$ as $f=f_0\times\dots\times f_D: X\to [0,1]^{(D+1)L}$ for continuous functions $f_0,\dots,f_D: X\to [0,1]^L$. We choose $\eta\geq\epsilon>0$ satisfying the implication
$$d(x,y)<\epsilon\quad\implies\quad \|f_i(x)-f_i(y)\|_\infty<\delta$$
for any $x,y\in X$ and each $0\leq i\leq D$.
Since $\mdim(X)<L/2$, by \eqref{mdim} we know that there exists $n\in\mathbb{N}$ such that
$$\widim_\epsilon(X,d_{[n]}) < Ln^k/2.$$
For each $0\leq i\leq D$, consider the function $F_i: X\to [0,1]^{Ln^k}=([0,1]^L)^{[n]}$ given by
$$F_i(x)=\bigl( f_i(vx) \bigl)_{v\in [n]}.$$
Then for any $x,y\in X$ and each $0\leq i\leq D$, we have the following
\[
d_{[n]}(x,y)<\epsilon\quad\implies\quad \|F_i(x)-F_i(y)\|_\infty<\delta.
\]
Applying Lemma \ref{Gutman widim}, we can find $\epsilon$-embeddings $G_0,\dots,G_D: X\to [0,1]^{Ln^k}=([0,1]^L)^{[n]}$ with respect to the metric $d_{[n]}$ on $X$ satisfying
\begin{equation}\label{approximation of F}
\|F_i(x)-G_i(x)\|_\infty<\delta\quad\text{for all}~x\in X.
\end{equation}
Using that $\dim_{Rok}(Y, \mathbb{Z}^{k})=D$,
we find $D+1$ open sets $U_{0}, U_{1}, \dots, U_{D}$ such that
$\{vU_{i}\}_{v\in[n]}^{i=0,\dots,D}$ is an open cover of $Y$ as defined in the introduction.
Pulling back this cover via the factor map $\pi$,
we obtain a Rokhlin cover of $X$ via $V^{v}_{i}=\pi^{-1}(vU_{i})$ for all $0\leq i\leq D$ and $v\in[n]$. For each
$0\leq i\leq D$, write $W_{i} = \overline{\bigcup_{v\in[n]}V^{v}_{i}}$.

 For each $v\in[n]$, denote by $p_v: ([0,1]^L)^{[n]}\to [0,1]^L$ the projection onto the $v$-th coordinate.
 For all $0\leq i\leq D$, consider the continuous map
\[
g_i': W_{i} \to [0,1]^L\quad\text{given by}\quad g_i'(x)=p_v\circ G_i((-v)x)\quad\text{for}~x\in\overline{V}^{v}_{i}.
\]
Now fix $0\leq i\leq D$ and $x\in\overline{V}^{v}_{i}$.
By \eqref{approximation of F},
\[
\|f_i(x)-g_i'(x)\|_\infty=\|p_v\circ F_i ((-v)x)-p_v\circ G_i ((-v)x)\|_\infty<\delta.
\]
Thus we have $\|f_i(x)-g_i'(x)\|_\infty<\delta$ on the domain of $g_i'$. Since this domain is compact, Lemma \ref{continuous extension} allows us to find a continuous extension $g_i: X\to [0,1]^L$ of $g_i'$ with $\|f_i(x)-g_i(x)\|_\infty<\delta$ for all $x\in X$.

Consider $g=g_0\times\dots\times g_D: X\to ([0,1]^L)^{(D+1)} = [0,1]^{(D+1)L}$. By construction, we obviously have $\|f(x)-g(x)\|_\infty<\delta$ for all $x\in X$. We claim that $g$ is in $A_\eta$. For this assume that $I_g\times\pi(x)=I_g\times\pi(y)$ for some $x,y\in X$. In particular, $\pi(x)=\pi(y)$. By the definition of the cover $\{V^{v}_{i}\}^{i=0,\dots,D}_{v\in[n]}$, it follows that there is some $0\leq i\leq D$ and $v\in[n]$ with $x,y\in V^{v}_{i}$. Now $I_g(x)=I_g(y)$ implies $I_{g_i}(x)=I_{g_i}(y)$, which by definition implies $g_i(wx)=g_i(wy)$ for all $w\in\mathbb{Z}^k$. For $w\in[n]$, observe that
$(w-v)x \in V^{w}_i$, and thus
\[
g_i((w-v)x) = g_i'((w-v)x)= p_w\circ G_i((-v)x).
\]
The analogous calculation holds for $y$ instead of $x$. Since $w\in[n]$ was arbitrary, it follows that
$G_i((-v)x)=G_i((-v)y)$. By construction, $G_i$ is an $\eta$-embedding with respect to the metric $d_{[n]}$, which implies that
\[
d(x,y) = d(v((-v)x), v((-v)y)) \leq d_{[n]}((-v)x,(-v)y) < \eta.
\]
This finishes the proof.
\end{proof}

\begin{Corollary} \label{embedding findim factor}
Let $(X,  \mathbb{Z}^{k})$ be an extension of a free $\mathbb{Z}^{k}$-action
$(Y,  \mathbb{Z}^{k})$, $D\in\mathbb{N}\cup\{0\}$ and $L\in\mathbb{N}$.
If  $\mdim(X)<L/2$, $\dim_{Rok}(Y, \mathbb{Z}^{k})=D$ and
$Y$ has finite Lebesgue covering dimension,
then there exists an embedding from $(X, \mathbb{Z}^{k})$ into
$\bigl( ([0,1]^{(D+1)L+1})^{\mathbb{Z}^k}, \sigma\bigl)$.
\end{Corollary}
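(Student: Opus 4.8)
The plan is to reuse Theorem~\ref{embedding general} and then absorb the factor $Y$ into a single additional shift coordinate. By Theorem~\ref{embedding general}, the hypotheses $\dim_{Rok}(Y,\mathbb{Z}^{k})=D$ and $\mdim(X)<L/2$ provide a continuous $f\in C(X,[0,1]^{(D+1)L})$ for which $I_{f}\times\pi\colon X\to([0,1]^{(D+1)L})^{\mathbb{Z}^{k}}\times Y$ is an equivariant embedding. Thus it suffices to construct an equivariant embedding $j\colon(Y,\mathbb{Z}^{k})\hookrightarrow(([0,1])^{\mathbb{Z}^{k}},\sigma)$. Indeed, any equivariant $j$ is automatically of the form $j=I_{h_{Y}}$ with $h_{Y}(y)=j(y)_{0}$, so, setting $h=h_{Y}\circ\pi\colon X\to[0,1]$, one gets $I_{h}=j\circ\pi$, and under the canonical identification $([0,1]^{(D+1)L})^{\mathbb{Z}^{k}}\times([0,1])^{\mathbb{Z}^{k}}\cong([0,1]^{(D+1)L+1})^{\mathbb{Z}^{k}}$ the map $I_{f}\times I_{h}=I_{f\times h}\colon X\to([0,1]^{(D+1)L+1})^{\mathbb{Z}^{k}}$ is continuous and equivariant. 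It is injective, since $I_{h}(x)=I_{h}(y)$ forces $\pi(x)=\pi(y)$ (as $j$ is injective), and then $I_{f}(x)=I_{f}(y)$ together with $\pi(x)=\pi(y)$ gives $x=y$ because $I_{f}\times\pi$ is an embedding. Hence $I_{f\times h}$ is the sought embedding of $(X,\mathbb{Z}^{k})$ into $(([0,1]^{(D+1)L+1})^{\mathbb{Z}^{k}},\sigma)$.

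It remains to produce $j$, and this is where freeness and finite-dimensionality of $(Y,\mathbb{Z}^{k})$ are used. First, $\dim(Y)<\infty$ forces $\mdim(Y)=0$: for every $\epsilon>0$ and $n\in\mathbb{N}$ there is a finite open cover of $Y$ of order $\le\dim(Y)$ and mesh (w.r.t.\ $d_{[n]}$) at most $\epsilon$, so $\widim_{\epsilon}(Y,d_{[n]})\le\dim(Y)$, and \eqref{mdim} gives $\mdim(Y)\le\lim_{n\to\infty}\dim(Y)/n^{k}=0<1/2$. Second, because $(Y,\mathbb{Z}^{k})$ is free it has no periodic or quasi-periodic points, so all the ``periodic-point'' hypotheses of the $\mathbb{Z}^{k}$-version of Takens' theorem established in Section~4 (for $k=1$ this is Theorem~\ref{takens embedding for Z}) hold vacuously. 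Applying that theorem to $(Y,\mathbb{Z}^{k})$ with one-dimensional observables $h_{Y}\in C(Y,[0,1])$ yields, for a box size $n$ with $n^{k}\ge 2\dim(Y)+1$ and a generic $h_{Y}$, an embedding $y\mapsto(h_{Y}(vy))_{v\in[n]}$ into $[0,1]^{n^{k}}$; a fortiori $j=I_{h_{Y}}\colon Y\to(([0,1])^{\mathbb{Z}^{k}},\sigma)$ is an equivariant embedding, and fixing one such $h_{Y}$ completes the construction.

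The only genuinely new ingredient beyond Theorem~\ref{embedding general} is thus the embedding $j$, i.e.\ the fact that every free, finite-dimensional $\mathbb{Z}^{k}$-action embeds into the $1$-cubical shift; this is the finite-dimensional case of the $\mathbb{Z}^{k}$-analog of Conjecture~\ref{conjectureLinTsu} verified in Section~4, and I expect it to be the main obstacle in a self-contained account. For $k=1$ it is Jaworski's theorem and is already contained in Theorem~\ref{takens embedding for Z}; for $k\ge2$ one must replace the one-dimensional marker/tower technology underlying the proof of Theorem~\ref{takens embedding for Z} by the Rokhlin-tower structure available for free finite-dimensional $\mathbb{Z}^{k}$-actions, and rerun the Baire-category argument over generic observables. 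Since no (quasi-)periodic orbits occur, only the analog of Case~(C) in the proof of Theorem~\ref{takens embedding for Z} is needed, and one chooses $[n]$ large enough that $n^{k}$ beats $2\dim(Y)$ — which is exactly why a single coordinate $[0,1]$ suffices, so nothing is lost by not widening to $[0,1]^{m}$.
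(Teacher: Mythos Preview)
Your proposal is correct and follows essentially the same route as the paper: apply Theorem~\ref{embedding general} to embed $X$ into $([0,1]^{(D+1)L})^{\mathbb{Z}^k}\times Y$, then absorb $Y$ via an equivariant embedding $(Y,\mathbb{Z}^k)\hookrightarrow([0,1]^{\mathbb{Z}^k},\sigma)$. The only difference is the source of this last embedding: the paper simply cites Jaworski \cite[Theorem~4.2]{J74} for it, whereas you derive it from the paper's own $\mathbb{Z}^k$-Takens theorem (Theorem~\ref{takens embedding for Z^k}) applied with $m=1$, using freeness to render the periodic-point hypotheses vacuous. Both routes are valid; yours is more self-contained but invokes a result that appears later in the paper, while the paper's one-line citation keeps the corollary in its natural place right after Theorem~\ref{embedding general}. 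Your third paragraph is unnecessary commentary---once Theorem~\ref{takens embedding for Z^k} is granted, nothing further needs to be ``rerun''.
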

\begin{proof}
Since in this case, $(Y,\mathbb{Z}^{k})$ embeds into $([0,1]^{\mathbb{Z}^k},\sigma)$ by \cite[Theorem 4.2]{J74}, this follows directly from Theorem \ref{embedding general}.
\end{proof}

The third named author \cite[Corollary 5.2]{Sza15}
proved that for a free $\mathbb{Z}^{k}$-action $(X, \mathbb{Z}^{k})$,
$\dim_{Rok}(X,\mathbb{Z}^{k})\leq2^k(\dim(X)+1)-1$.
As a consequence we have
\begin{Corollary}
Let $(X, \mathbb{Z}^{k})$ be an extension of a free $\mathbb{Z}^{k}$-action $(Y,  \mathbb{Z}^{k})$
and $L\in\mathbb{N}$.
If $Y$ has finite Lebesgue covering dimension and $\mdim(X)<L/2$,
then there exists an  embedding from $(X, \mathbb{Z}^{k})$ into
$\bigl( ([0,1]^M)^{\mathbb{Z}^k}, \sigma\bigl)$,
where $M=1+2^k(\dim(X)+1)L$.
\end{Corollary}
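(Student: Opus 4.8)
The plan is to deduce this from Corollary~\ref{embedding findim factor} together with the Rokhlin-dimension bound of \cite[Corollary 5.2]{Sza15}, by using $(X,\mathbb{Z}^{k})$ as its own free factor.

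First I would record two preliminary observations. (i) The action $(X,\mathbb{Z}^{k})$ is itself free: if $nx=x$ for some $x\in X$ and $n\in\mathbb{Z}^{k}$, then $n\pi(x)=\pi(nx)=\pi(x)$, and freeness of $(Y,\mathbb{Z}^{k})$ forces $n=0$. (ii) If $\dim(X)=\infty$ then $M=\infty$ and there is nothing to prove: fixing a topological embedding $h\colon X\hookrightarrow[0,1]^{\mathbb{N}}$ into the Hilbert cube, the map $x\mapsto(h(wx))_{w\in\mathbb{Z}^{k}}$ is continuous, equivariant, and injective (look at the coordinate $w=0$), hence an embedding of $(X,\mathbb{Z}^{k})$ into $\bigl(([0,1]^{\mathbb{N}})^{\mathbb{Z}^{k}},\sigma\bigr)$ by compactness of $X$. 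So I may assume $\dim(X)<\infty$ henceforth.

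Next I would apply \cite[Corollary 5.2]{Sza15} to the free action $(X,\mathbb{Z}^{k})$, obtaining that $D:=\dim_{Rok}(X,\mathbb{Z}^{k})$ is finite with $D\leq 2^{k}(\dim(X)+1)-1$. Then I would invoke Corollary~\ref{embedding findim factor} with the \emph{identity} factor map $\pi=\mathrm{id}_{X}\colon X\to X$: the action $(X,\mathbb{Z}^{k})$ is an extension of the free action $(X,\mathbb{Z}^{k})$, the space $X$ has finite Lebesgue covering dimension, $\dim_{Rok}(X,\mathbb{Z}^{k})=D$, and $\mdim(X)<L/2$; hence there exists an embedding of $(X,\mathbb{Z}^{k})$ into $\bigl(([0,1]^{(D+1)L+1})^{\mathbb{Z}^{k}},\sigma\bigr)$. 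Since $(D+1)L+1\leq 2^{k}(\dim(X)+1)L+1=M$, I would finally compose this embedding with the evident equivariant inclusion $\bigl(([0,1]^{(D+1)L+1})^{\mathbb{Z}^{k}},\sigma\bigr)\hookrightarrow\bigl(([0,1]^{M})^{\mathbb{Z}^{k}},\sigma\bigr)$ obtained by padding the remaining coordinates with the constant $0$, which yields the asserted embedding.

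I do not expect a genuine obstacle here: the statement is a short book-keeping consequence once Corollary~\ref{embedding findim factor} and \cite[Corollary 5.2]{Sza15} are available. The only two points requiring a little care are transferring freeness from $(Y,\mathbb{Z}^{k})$ to $(X,\mathbb{Z}^{k})$ so that Szab\'o's estimate can be applied to $X$ directly, and disposing of the degenerate case $\dim(X)=\infty$ (where $M$ is infinite) separately.
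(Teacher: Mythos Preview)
Your argument is correct, but it proceeds slightly differently from what the paper (implicitly) has in mind. The paper presents this corollary as a direct consequence of Szab\'o's bound $\dim_{Rok}(Y,\mathbb{Z}^{k})\le 2^{k}(\dim(Y)+1)-1$ applied to the \emph{factor} $Y$, followed by Corollary~\ref{embedding findim factor}; this yields an embedding into $\bigl(([0,1]^{M'})^{\mathbb{Z}^{k}},\sigma\bigr)$ with $M'=1+2^{k}(\dim(Y)+1)L$. In fact the occurrence of $\dim(X)$ in the displayed value of $M$ is almost certainly a typo for $\dim(Y)$, since otherwise the hypothesis that $Y$ has finite covering dimension plays no role.

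Your route instead transfers freeness to $X$, applies Szab\'o's bound to $X$, and uses the identity as factor map. This is a legitimate and clean way to prove the statement \emph{as literally written}, and it correctly forces you to treat the degenerate case $\dim(X)=\infty$ separately. The trade-off is: the paper's intended argument uses the finite-dimensionality of $Y$ and gives the (generally sharper and always finite) exponent $1+2^{k}(\dim(Y)+1)L$, while your argument never uses that hypothesis and gives the exponent $1+2^{k}(\dim(X)+1)L$ matching the printed formula. Either way the underlying mechanism is the same combination of \cite[Corollary 5.2]{Sza15} with Corollary~\ref{embedding findim factor}.
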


Let $\alpha_{1},\dots,\alpha_{k}$ be irrational numbers.
An \textbf{irrational $\mathbb{Z}^k$-rotation} on the $k$-torus $\mathbb{T}^{k}$ is defined as
$$T_{\alpha_{1},\dots,\alpha_{k}}:\mathbb{Z}^k\times\mathbb{T}^{k}\to\mathbb{T}^{k},
(n_{1},\dots, n_{k})\times(x_{1},\dots, x_{k})\mapsto(x_{1}+n_{1}\alpha_{1},\dots, x_{k}+n_{k}\alpha_{k}).$$
Here $\alpha_{1},\dots,\alpha_{k}$ do not have to be linearly independent over the rationals.

\begin{Corollary}
Let $(X,\mathbb{Z}^{k})$ be an extension of an irrational $\mathbb{Z}^k$-rotation
on the $k$-torus and $L\in\mathbb{N}$.
If $\mdim(X)<L/2$, then there exists an embedding from
$(X, \mathbb{Z}^{k})$ into $\bigl( ([0,1]^{2^kL+1})^{\mathbb{Z}^k}, \sigma\bigl)$.
\end{Corollary}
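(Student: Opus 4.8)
The plan is to deduce this from Corollary \ref{embedding findim factor} once we know that the base system $Y=\mathbb{T}^{k}$ carrying the irrational $\mathbb{Z}^{k}$-rotation $T_{\alpha_{1},\dots,\alpha_{k}}$ is free, has finite Lebesgue covering dimension, and satisfies $\dim_{Rok}(\mathbb{T}^{k},\mathbb{Z}^{k})\le 2^{k}-1$. Granting this, write $D'=\dim_{Rok}(\mathbb{T}^{k},\mathbb{Z}^{k})$; since $D'\le 2^{k}-1$, Corollary \ref{embedding findim factor} applied to $\pi\colon X\to\mathbb{T}^{k}$ with this value of $D'$ yields an embedding of $(X,\mathbb{Z}^{k})$ into $\bigl(([0,1]^{(D'+1)L+1})^{\mathbb{Z}^{k}},\sigma\bigr)$, and because $(D'+1)L+1\le 2^{k}L+1$ we may compose with the coordinatewise inclusion $\bigl(([0,1]^{a})^{\mathbb{Z}^{k}},\sigma\bigr)\hookrightarrow\bigl(([0,1]^{b})^{\mathbb{Z}^{k}},\sigma\bigr)$ valid for $a\le b$ to land in $\bigl(([0,1]^{2^{k}L+1})^{\mathbb{Z}^{k}},\sigma\bigr)$.

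Freeness and finite-dimensionality are immediate: if $(n_{1},\dots,n_{k})$ fixes $(x_{1},\dots,x_{k})$ then $n_{i}\alpha_{i}\in\mathbb{Z}$ for all $i$, hence $n_{i}=0$ since $\alpha_{i}$ is irrational; and $\dim(\mathbb{T}^{k})=k$. For the Rokhlin-dimension bound I would reduce to the one-torus via a product estimate. The action $T_{\alpha_{1},\dots,\alpha_{k}}$ is the coordinatewise product of the circle rotations $R_{\alpha_{i}}$, so given $n$ and, for each $i$, open sets $U^{(i)}_{0},\dots,U^{(i)}_{d_{i}}\subseteq\mathbb{T}$ witnessing $\dim_{Rok}(\mathbb{T},R_{\alpha_{i}},\mathbb{Z})\le d_{i}$, the product boxes $U^{(1)}_{l_{1}}\times\cdots\times U^{(k)}_{l_{k}}$, indexed over $\prod_{i}\{0,\dots,d_{i}\}$, induce $[n]$-towers for the product action and together cover $\mathbb{T}^{k}$. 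Indeed, the $[n]$-translates of one such box are pairwise disjoint because two distinct elements of $[n]=\{0,\dots,n-1\}^{k}$ differ in some coordinate $i$, where the corresponding $n$ translates of $\overline{U^{(i)}_{l_{i}}}$ are already disjoint; and the union over all boxes and all $[n]$-translates is the product of the unions in the individual coordinates, which is all of $\mathbb{T}^{k}$. Hence $\dim_{Rok}(\mathbb{T}^{k},\mathbb{Z}^{k})\le\prod_{i}(d_{i}+1)-1$, and it suffices to show $\dim_{Rok}(\mathbb{T},R_{\alpha},\mathbb{Z})\le 1$ for every irrational $\alpha$.

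Proving $\dim_{Rok}(\mathbb{T},R_{\alpha},\mathbb{Z})\le 1$ is the main obstacle. I would use the Kakutani--Rokhlin structure of a circle rotation: for a suitably chosen subinterval, the first-return map of $R_{\alpha}$ partitions it into two subintervals over which $R_{\alpha}$ erects two Rokhlin columns that together partition $\mathbb{T}$, and by choosing the interval between appropriate orbit points one can force both column heights $h_{1},h_{2}$ into the range $[n,2n]$. With $B_{1},B_{2}$ the two base intervals, one takes $U_{0}$ to be the interior of $B_{1}\cup B_{2}$ and $U_{1}$ the interior of $R_{\alpha}^{h_{1}-n}B_{1}\cup R_{\alpha}^{h_{2}-n}B_{2}$; the $n$-tower over $U_{0}$ then covers levels $0,\dots,n-1$ of each column and the $n$-tower over $U_{1}$ covers levels $h_{j}-n,\dots,h_{j}-1$, and since $h_{j}\le 2n$ these two ranges together exhaust each column, hence all of $\mathbb{T}$; the required disjointness of the first $n$ translates of $\overline{U_{0}}$ and of $\overline{U_{1}}$ follows because distinct levels of a Kakutani--Rokhlin partition are disjoint. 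The delicate point is the standard clash between the open sets one wants and the half-open partition cells that appear naturally, which I would handle by passing to slightly shrunken open base intervals and then re-thickening (a harmless loss the second tower absorbs); alternatively one may simply cite the known identity $\dim_{Rok}(\mathbb{T},R_{\alpha},\mathbb{Z})=1$ from the literature on Rokhlin dimension of $C^{*}$-dynamics. Combined with the product estimate, this gives $\dim_{Rok}(\mathbb{T}^{k},\mathbb{Z}^{k})\le 2^{k}-1$ and completes the proof.
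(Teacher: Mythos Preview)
Your proposal is correct and follows essentially the same route as the paper: reduce to Corollary~\ref{embedding findim factor} by showing that the irrational $\mathbb{Z}^{k}$-rotation on $\mathbb{T}^{k}$ has Rokhlin dimension at most $2^{k}-1$, which is obtained from the coordinatewise product of the circle rotations together with the bound $\dim_{Rok}(\mathbb{T},R_{\alpha},\mathbb{Z})\le 1$. The only difference is that the paper simply cites \cite[Theorem~6.2]{HWZ12} for the latter bound rather than sketching a Kakutani--Rokhlin argument, and it does not spell out the coordinatewise inclusion you mention (it tacitly uses that an upper bound on $\dim_{Rok}$ suffices in Corollary~\ref{embedding findim factor}).
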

\begin{proof}
It can be proved that any irrational rotation on $\mathbb{T}$ is of topological Rokhlin dimension $1$
as a $\mathbb{Z}$-action
(\cite[Theorem 6.2]{HWZ12}).
By the definition of Rokhlin dimension, we know that for every irrational rotation
$(\mathbb{T}, T_{\alpha_{i}})$ $(1\leq i\leq k)$ and $n\in\mathbb{N}$,
there exist open sets $U_{0}^{(i)}$ and $U_{1}^{(i)}$ in $\mathbb{T}$ such that
the following hold:
\begin{enumerate}
\item for every $m\in\{0,1\}$, $T_{\alpha_{i}}^{j}\overline{U}_{m}^{(i)}$ $(0\leq j\leq n-1)$ are pairwise disjoint;

\item $\bigcup_{m=0}^{1}\bigcup_{j=0}^{n-1}T_{\alpha_{i}}^{j}U_{m}^{(i)}=\mathbb{T}$.

\end{enumerate}
Consider the following $2^{k}$ open sets in $\mathbb{T}^{k}$:
$$U_{m_{1}}^{(1)}\times U_{m_{2}}^{(2)}\times\cdots\times U_{m_{k}}^{(k)}, \;\;\; m_{i}\in \{0,1\}.$$
Using (1) and (2), one can easily check that for every $n\in\mathbb{N}$, every open set above induce an $[n]$-tower and the union of these towers covers $\mathbb{T}^{k}$. So the Rokhlin dimension of the irrational rotation induced by $\alpha_{i}$ $(1\leq i\leq k)$ is at most $2^{k}-1$. Now apply Theorem \ref{embedding findim factor} to finish the proof.
\end{proof}

%\begin{Remark}
%In the current state of affairs the only known examples of systems with bounded Rokhlin dimension are those admitting aperiodic finite dimensional factors. In case this would change, we would like to record it is possible to prove the following theorem: Let $(Y, \mathbb{Z}^{k})$ a $\mathbb{Z}^{k}$-action. Assume that $(Y, \mathbb{Z}^{k})$ has finite topological Rokhlin dimension $D$. Let $L\in\mathbb{N}$. If $\mdim(Y)<L/2$, then there exists an equivariant embedding from $(Y,  \mathbb{Z}^{k})$ into $\bigl( ([0,1]^{C(D)L})^{\mathbb{Z}^k}, \sigma\bigl)$, where $C(D)$ is of exponential type.
%\end{Remark}

\section{Takens' embedding theorem with a continuous observable
and an embedding conjecture for $\mathbb{Z}^{k}$-actions}
The proof of the following result, Takens' embedding theorem with a continuous observable for $\mathbb{Z}^{k}$-actions,
is analogous to that of Theorem \ref{takens embedding for Z}. We thus do not give any further details.
\begin{Theorem}\label{takens embedding for Z^k}
Let $d\in\mathbb{N}\cup\{0\}$ and $m\in\mathbb{N}$.
If a dynamical system $(X,\mathbb{Z}^{k})$ satisfies that $\dim(X)=d$
and
$$\frac{\dim(\{x\in X:ax=x,a\in A\})}{[\mathbb{Z}^{k}:A]}<\frac{m}{2}$$
for every subgroup $A$ of $\mathbb{Z}^{k}$ with
$[\mathbb{Z}^{k}: A]\leq 2d$,
then the set of continuous functions $f:X\to [0,1]^{m}$ so that
\begin{equation*}\label{deff2d}
f_{2d}:X\to ([0,1]^{m})^{[0,2d]^{k}\cap\mathbb{Z}^{k}},\;\;x\mapsto (f(ix))_{i\in[0,2d]^{k}\cap\mathbb{Z}^{k}}
\end{equation*}
is an embedding
is comeagre in $C(X,[0,1]^{m})$ w.r.t. supremum topology.
\end{Theorem}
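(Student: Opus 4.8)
The plan is to mimic, essentially verbatim, the structure of the proof of Theorem~\ref{takens embedding for Z}, replacing the single shift $T$ by the $\mathbb{Z}^k$-action and replacing ``orbits'' by $\mathbb{Z}^k$-orbits throughout. As before, by the Baire category theorem it suffices to cover $(X\times X)\setminus\triangle$ by countably many compact sets $K$ for each of which the set $D_K=\{f\in C(X,[0,1]^m): f_{2d}(x)\neq f_{2d}(y)\ \forall (x,y)\in K\}$ is open and dense. Openness of $D_K$ is proved exactly as in Part~1 of the proof of Theorem~\ref{takens embedding for Z}, using that the continuous function $(x,y)\mapsto\max_{i\in[0,2d]^k\cap\mathbb{Z}^k}\|f(ix)-f(iy)\|_\infty$ attains a positive minimum on $K$. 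For density, the index group $[0,2d]^k\cap\mathbb{Z}^k$ plays the role of $\{0,1,\dots,2d\}$, and the stabilizer subgroups $A\leq\mathbb{Z}^k$ (finite-index, with $[\mathbb{Z}^k:A]\leq 2d$) play the role of the cyclic groups $\langle n\rangle$; the fundamental-domain count $[\mathbb{Z}^k:A]$ takes the place of the period $n$.

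First I would organize $(X\times X)\setminus\triangle$ into the analogous cases. Let $\mathbb{Z}^k x$ denote the orbit of $x$ and let $\operatorname{Stab}(x)=\{a\in\mathbb{Z}^k: ax=x\}$, so $x$ lies in a finite orbit exactly when $\operatorname{Stab}(x)$ has finite index. Write $P_n=\{x: [\mathbb{Z}^k:\operatorname{Stab}(x)]\leq n\}$, $H_n=P_n\setminus P_{n-1}$, and $P=\bigcup_n H_n$. Case~(A): $\mathbb{Z}^k x\cap\mathbb{Z}^k y=\emptyset$, with subcases according to whether each of $x,y$ lies in $P_{2d}$, in $\bigcup_{n\geq 2d+1}P_n$, or in $X\setminus P$, exactly parallel to (A.1)--(A.6). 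Case~(B): $x,y$ both lie in finite orbits with $\mathbb{Z}^k x=\mathbb{Z}^k y$, so $y=lx$ for some $l\in\mathbb{Z}^k$, with subcases (B.1) $x,y\in P_{2d}$ and (B.2) $x,y\in\bigcup_{n\geq 2d+1}P_n$. Case~(C): $x,y$ both in $X\setminus P$ with $\mathbb{Z}^k x=\mathbb{Z}^k y$, i.e.\ $y=lx$, $l\in\mathbb{Z}^k$. For each case one chooses, just as before, small neighborhoods $U_x,U_y$ inside the appropriate stratum ($H_{p_x}\times H_{p_y}$, or a product involving $X\setminus P$) so that the closures, translated over a fundamental domain of $\operatorname{Stab}(x)$ (respectively over the full cube $[0,2d]^k\cap\mathbb{Z}^k$ when the orbit is infinite or when $[\mathbb{Z}^k:\operatorname{Stab}(x)]>2d$), are pairwise disjoint; here one uses that $H_{p}$ is open in $P_{p}$ and that $P_{p-1}$ is closed to shrink $U_x$ into $H_{p_x}$. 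One then sets $K_{(x,y)}=\overline{U}_x\times\overline{U}_y$ (in Cases (B),(C), $K_{(x,y)}=\overline{U}\times l\overline{U}$) and, using that every subspace of the second-countable space $X\times X$ is Lindel\"of, extracts a countable subcover of each stratum, exactly as in Part~3.

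The heart of the matter is the density of $D_{K_{(x,y)}}$, which is where the dimension hypothesis enters. Given $\tilde f\in C(X,[0,1]^m)$ and $\epsilon>0$, one picks a finite open cover $\alpha_j$ of $\overline{U}_j$ with small mesh relative to $\tilde f\circ(\text{shift})$ and with $\operatorname{ord}(\alpha_j)<\tilde p_j m/2$, where $\tilde p_j=\min\{[\mathbb{Z}^k:\operatorname{Stab}(j)],\ |[0,2d]^k\cap\mathbb{Z}^k|\}$ --- wait, more precisely where $\tilde p_j$ is the number of lattice points in a fundamental domain of $\operatorname{Stab}(j)$ when that is $\leq 2d$, capped appropriately; this is exactly where $\dim(\{x:ax=x,a\in A\})/[\mathbb{Z}^k:A]<m/2$ and $\dim(X)=d<(2d+1)/2$ are used, since they give $\dim(\overline{U}_j)<\tilde p_j m/2$. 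Then, choosing representatives $q_W\in W$ and a partition of unity, one perturbs the defining vectors $\vec v_W$ generically so that the relevant affine-independence relations hold; the required genericity statements are supplied by Lemmas~\ref{affinely independent and linear independent} and~\ref{affinely independent for matrix}, whose hypotheses (no value twice in a row or column, at most twice overall) are met because the fundamental-domain translates and the cube translates overlap in a controlled, cyclic-type fashion indexed by cosets of $\operatorname{Stab}(x)$. Finally one glues via Lemma~\ref{continuous extension} to obtain $f$ with $\|f-\tilde f\|_\infty<\epsilon$ and $f\in D_{K_{(x,y)}}$, reading off a contradiction from property~(3)/(c)/(iii) exactly as in the $\mathbb{Z}$ proof, using $a q' = q'$ for $a\in\operatorname{Stab}(q')$ to reduce indices modulo the stabilizer.

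The main obstacle, and the only place where genuine care beyond bookkeeping is needed, is the combinatorics of fundamental domains for an arbitrary finite-index subgroup $A\leq\mathbb{Z}^k$: unlike the cyclic case where ``reduce modulo $n$'' is unambiguous, one must fix a fundamental domain $\Lambda_A\subset\mathbb{Z}^k$ for $A$ and verify that the map $[0,2d]^k\cap\mathbb{Z}^k\to\Lambda_A$ (``reduce modulo $A$'') interacts with the cube structure so that each value appears at most twice in the relevant matrix $\mathcal{M}$ --- the analogue of Case~(B)'s matrix --- so that Lemma~\ref{affinely independent for matrix} still applies. One also needs that when $[\mathbb{Z}^k:\operatorname{Stab}(x)]>2d$ the cube translates $\{g\overline U: g\in[0,2d]^k\cap\mathbb{Z}^k\}$ are genuinely disjoint, which follows since $ax=x$ with $a\in[0,2d]^k\cap\mathbb{Z}^k\setminus\{0\}$ would force $[\mathbb{Z}^k:\operatorname{Stab}(x)]\leq 2d$. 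Since the paper explicitly says the proof is ``analogous to that of Theorem~\ref{takens embedding for Z}'' and omits further details, the expectation is that this combinatorial reduction goes through mutatis mutandis, and one would simply indicate the dictionary $\langle n\rangle\leftrightarrow A$, $n\leftrightarrow[\mathbb{Z}^k:A]$, $\{0,\dots,2d\}\leftrightarrow[0,2d]^k\cap\mathbb{Z}^k$ and refer back to the corresponding steps.
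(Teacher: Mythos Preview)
Your approach is exactly what the paper intends: it offers no proof beyond ``analogous to that of Theorem~\ref{takens embedding for Z}'', and your dictionary (stabilizer subgroups $A$ in place of $n\mathbb{Z}$, index in place of period, the cube $[0,2d]^k$ in place of $\{0,\dots,2d\}$) together with the Baire/Lindel\"of reduction and the perturbation via Lemmas~\ref{affinely independent and linear independent}--\ref{continuous extension} is the right skeleton.

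There is, however, one concrete slip you should fix. The implication ``$ax=x$ for some nonzero $a\in[0,2d]^k$ forces $[\mathbb{Z}^k:\operatorname{Stab}(x)]\leq 2d$'' is false for $k\geq 2$: take $\operatorname{Stab}(x)=\mathbb{Z}\times\{0\}\subset\mathbb{Z}^2$, which has infinite index yet contains $(1,0)$. Thus the stratification by the \emph{index} of the stabilizer is too coarse --- points with nontrivial infinite-index stabilizer land in your ``aperiodic'' stratum but do not have pairwise disjoint cube translates. The remedy is to stratify by the stabilizer subgroup itself (there are countably many subgroups of $\mathbb{Z}^k$), taking $U_x$ open in the closed set $\{z:\operatorname{Stab}(z)\supseteq A\}$ with $A=\operatorname{Stab}(x)$, so that $g\overline{U}_x=g'\overline{U}_x$ whenever $g-g'\in A$ and one works with a set $R\subset[0,2d]^k$ of coset representatives rather than the full cube. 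The dimension count then survives because $|R|\geq\min\bigl([\mathbb{Z}^k:A],\,2d+1\bigr)$: if every standard generator $e_j$ has order $m_j\leq 2d$ in $\mathbb{Z}^k/A$ then $\prod_j\{0,\dots,m_j-1\}\subset[0,2d]^k$ already surjects onto the quotient, while if some $m_j\geq 2d+1$ (or is infinite) the line $\{te_j:0\leq t\leq 2d\}$ alone gives $2d+1$ distinct cosets. With this refinement the rest of your outline goes through.
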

Heuristically, Theorem \ref{takens embedding for Z^k} (for $k=2$) corresponds to an experimental setup, where the system may be subjected to an external change $S$ (e.g., a magnetic field) which commutes with the time evolution map $T$, i.e., $TS=ST$. In \cite{GutQiaTsu2017} a general embedding conjecture for $\mathbb{Z}^{k}$-actions is presented.
For a $\mathbb{Z}^{k}$-action $(X,\mathbb{Z}^{k})$ and a subgroup $A$ of $\mathbb{Z}^{k}$, we define
$$X_{A}=\{x\in X: nx=x, n\in A\}$$ and let $\mathbb{Z}^{k}/A$ be the the quotient group of $\mathbb{Z}^{k}$ by $A$.
Then $\mathbb{Z}^{k}/A$ acts on the space $X_{A}$ in the following natural way: $(aA)x=ax$, $aA\in\mathbb{Z}^{k}/A$ and $x\in X_{A}$. So $(X_{A},\mathbb{Z}^{k}/A)$ is also a dynamical system and the mean dimension of $(X_{A},\mathbb{Z}^{k}/A)$ is well defined. Moreover, if $\mathbb{Z}^{k}/A$ is a finite group, that is, the index of $A$ in $\mathbb{Z}^{k}$ is finite,
then by the definition of mean dimension we easily get that
\begin{equation}\label{perdim}
\mdim(X_{A},\mathbb{Z}^{k}/A)=\frac{\dim(X_{A})}{\#(\mathbb{Z}^{k}/A)},
\end{equation}
where $\dim(\cdot)$ is the Lebesgue covering dimension introduced in Section 2.
The conjecture on embedding for $\mathbb{Z}^{k}$-actions in \cite{GutQiaTsu2017} is as follows:
\begin{Conjecture}\label{conjectureZ^k}
If a dynamical system $(X,\mathbb{Z}^{k})$ satisfies that for every subgroup $A$ of $\mathbb{Z}^{k}$
$$\mdim(X_{A},\mathbb{Z}^{k}/A)<\frac{D}{2},$$
then $(X,\mathbb{Z}^{k})$ embeds into the $D$-cubical shift $(([0,1]^{D})^{\mathbb{Z}^{k}}, \sigma)$.
\end{Conjecture}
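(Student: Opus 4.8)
The plan is to run the Baire category argument already used for Theorems \ref{takens embedding for Z} and \ref{embedding general}: it suffices to cover $(X\times X)\setminus\triangle$ by countably many compacta $K$ such that $D_{K}=\{f\in C(X,[0,1]^{D}): I_{f}(x)\neq I_{f}(y)\ \text{for all}\ (x,y)\in K\}$ is dense and open in $C(X,[0,1]^{D})$. Openness is verified verbatim as in Part~1 of the proof of Theorem \ref{takens embedding for Z}, so the entire content is density, i.e.\ perturbing an arbitrary $f$ within $\delta$ so as to separate a prescribed compact set of pairs. The first step is to \emph{stratify} $(X\times X)\setminus\triangle$ by the two stabilizer subgroups $\mathrm{Stab}(x),\mathrm{Stab}(y)\le\mathbb{Z}^{k}$ and, when the two orbits meet, by the translation relating them. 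Since $\mathbb{Z}^{k}$ has only countably many subgroups, this gives countably many strata, each a Lindel\"of subspace, so the usual cover-extraction reduces matters to producing, around each pair in a given stratum, a neighbourhood $U_{(x,y)}$ with $D_{\overline{U}_{(x,y)}}$ dense. The induction is on the rank $k$, with base case $k=0$ being the classical fact that $\dim<D/2$ implies a compact metric space embeds into $[0,1]^{D}$.

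Second, on strata where both $x$ and $y$ have \emph{finite-index} stabilizers there is a finite-index $A$ with $x,y\in X_{A}$, and by \eqref{perdim} the hypothesis reads $\dim(X_{A})/[\mathbb{Z}^{k}:A]<D/2$ for every such $A$ --- exactly the numerical input of the finite-dimensional $\mathbb{Z}^{k}$-Takens theorem (Theorem \ref{takens embedding for Z^k}). Applying that argument locally to the finite-dimensional closed sets $X_{A}$ (with the orbit combinatorics of the finite group $\mathbb{Z}^{k}/A$ in place of a finite cyclic group, and Lemmas \ref{affinely independent and linear independent} and \ref{affinely independent for matrix} used as they stand) produces the required $\delta$-small perturbation separating all such pairs, including the mixed "one periodic, one not" pairs handled by the disjoint-orbit variant.

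Third, on strata where some stabilizer $A$ is nonzero of \emph{infinite} index, the relevant point lies in the closed $\mathbb{Z}^{k}$-invariant set $X_{A}$, on which $A$ acts trivially, so $I_{f}|_{X_{A}}$ factors through the induced $\mathbb{Z}^{k}/A$-system. Since $\mathbb{Z}^{k}/A\cong\mathbb{Z}^{k'}\oplus F$ with $k'<k$ and $F$ finite, and the mean-dimension hypotheses for subgroups of $\mathbb{Z}^{k}$ containing $A$ descend to the corresponding hypotheses on $\mathbb{Z}^{k}/A$, the inductive hypothesis embeds $X_{A}$ into $\bigl(([0,1]^{D})^{\mathbb{Z}^{k}/A},\sigma\bigr)$; pulling this embedding back to $X_{A}$ gives $f_{A}\in C(X_{A},[0,1]^{D})$ separating the pairs in question. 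One then extends $f_{A}$ to $X$ by Lemma \ref{continuous extension}, arranging simultaneously (using $\mdim(X)<D/2$, a tower decomposition and Lemma \ref{Gutman widim} off a neighbourhood of $X_{A}$) that the extension remains a good $\widim_{\epsilon}$-approximation elsewhere; interleaving this over the countably many infinite-index subgroups $A$ via Baire category assembles the quasi-periodic part.

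The fourth step --- the free stratum, pairs with both stabilizers trivial --- is where I expect the real obstacle to lie, and is precisely the still-open core of Conjecture \ref{conjectureLinTsu} already for $k=1$. For $k\ge1$ the difficulty is that a general free $\mathbb{Z}^{k}$-action (possibly connected, non-minimal, infinite-dimensional) need not possess \emph{any} of the structures that power the known embedding theorems: Theorem \ref{embedding general} needs a factor of finite Rokhlin dimension, \cite[Theorems 1.5, 1.6]{GutLinTsu15} need an infinite minimal or an aperiodic symbolic factor, and the marker technique needs a minimal aperiodic factor. A proof of the free case would therefore require a new "mean-dimension to $\epsilon$-embedding along towers" mechanism with no such hypothesis --- for instance, replacing Rokhlin towers by tilings/markers of $\mathbb{Z}^{k}$ and pushing Lemma \ref{Gutman widim} through the tile combinatorics while a $D/2$-type codimension count controls the overlaps. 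Granting such a device for free systems, the periodic and quasi-periodic strata above close the induction; so the conjecture would follow from this one missing ingredient, which is exactly why it remains open.
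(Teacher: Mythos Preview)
The statement you are attempting is Conjecture~\ref{conjectureZ^k}: it is presented in the paper as an \emph{open conjecture}, not a theorem, and the paper supplies no proof. The paper only remarks (via Theorem~\ref{takens embedding for Z^k} together with~\eqref{perdim}) that the conjecture holds when $\dim(X)<\infty$, and that for $k=1$ it specializes to the still-open Lindenstrauss--Tsukamoto Conjecture~\ref{conjectureLinTsu}. There is therefore no ``paper's own proof'' to compare against, and you already acknowledge in your fourth step that the free stratum is genuinely open.

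Read as a proposed \emph{reduction} to the free case, your outline still has real gaps. In step~3 you invoke ``a tower decomposition and Lemma~\ref{Gutman widim} off a neighbourhood of $X_{A}$'' to control the extension of $f_{A}$; but the availability of such towers on the complement of $X_{A}$ is exactly the missing mechanism you yourself flag in step~4, so this is circular. The induction in step~3 is phrased for $\mathbb{Z}^{k}$-actions, yet the quotient $\mathbb{Z}^{k}/A\cong\mathbb{Z}^{k'}\times F$ has torsion, so the inductive statement must be for finitely generated abelian groups, which you do not set up. Most seriously, mixed pairs with one point quasi-periodic and the other free are not handled: the perturbation arguments of Theorem~\ref{takens embedding for Z} in Cases~(A.2)--(A.6) and~(C) use $\dim(X)=d<\infty$ on the \emph{whole space} (to bound $\ord(\alpha)$ on neighbourhoods of aperiodic points), not merely on periodic sets, and your step~2 invokes Theorem~\ref{takens embedding for Z^k} only ``locally to the finite-dimensional closed sets $X_{A}$''. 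In short, the conjecture is open in the paper, and even the stratification you sketch does not cleanly isolate the free case without already presupposing the unavailable structural input.
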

Note that every subgroup of $\mathbb{Z}$ has the form $n\mathbb{Z}$ ($n\in\mathbb{Z})$
and that the index of $n\mathbb{Z}$ in $\mathbb{Z}$ is $|n|$.
For $k=1$, Conjecture \ref{conjectureZ^k} coincides with Conjecture \ref{conjectureLinTsu}.
Observe that the mean dimension of finite dimensional spaces is zero.
As an immediate consequence of Takens' embedding theorem for $\mathbb{Z}^{k}$-actions (Theorem \ref{takens embedding for Z^k}), we confirm the correctness of Conjecture \ref{conjectureZ^k} for finite dimensional dynamical systems. Moreover, we state without proof the following related result:
\begin{Theorem}\label{period immersion}
Let $(X, \mathbb{Z}^{k})$ be a $\mathbb{Z}^{k}$-action and $m\in\mathbb{N}$.
If for every subgroup $H$ of $\mathbb{Z}^{k}$ with $[\mathbb{Z}^{k}:H]<\infty$,
$$\frac{\dim(\{x\in X: hx=x,h\in H\})}{[\mathbb{Z}^{k}:H]}<\frac{m}{2},$$
then there is an equivariant immersion
\footnote
{By an immersion we mean an injective continuous mapping.
Note that when the domain is not compact, this may not be an embedding.}
from $\bigcup_{[\mathbb{Z}^{k}:H]<\infty} X_{H}$ to $([0,1]^m)^{\mathbb{Z}^{k}}$.
\end{Theorem}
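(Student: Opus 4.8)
The plan is to realise the desired immersion in the form $I_f$ for a suitable observable $f\in C(X,[0,1]^m)$, found by a Baire category argument performed one stratum at a time over the finite-index subgroups of $\mathbb{Z}^k$, of which there are only countably many. Write $\mathcal{P}=\bigcup_{[\mathbb{Z}^k:H]<\infty}X_H$ for the set of points of finite orbit; it is $\mathbb{Z}^k$-invariant and $\sigma$-compact but in general not compact, which is exactly why we can only hope for an immersion (an injective continuous equivariant map), not an embedding. The first observation is that any two distinct points $x,y\in\mathcal{P}$ already lie in one stratum: if $x\in X_{H_1}$ and $y\in X_{H_2}$ with $H_1,H_2$ of finite index, then $H_1\cap H_2$ has finite index and $x,y\in X_{H_1\cap H_2}$. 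Hence, if for every finite-index subgroup $H$ the set
\[
\mathcal{G}_H:=\bigl\{f\in C(X,[0,1]^m):I_f|_{X_H}\text{ is injective}\bigr\}
\]
is comeagre, then $\bigcap_{[\mathbb{Z}^k:H]<\infty}\mathcal{G}_H$ is comeagre by the Baire category theorem, hence nonempty, and it is exactly the set of $f$ for which $I_f|_{\mathcal{P}}$ is injective; any such $f$ provides the required equivariant immersion $I_f|_{\mathcal{P}}\colon\mathcal{P}\to([0,1]^m)^{\mathbb{Z}^k}$. So it is enough to prove that each $\mathcal{G}_H$ is comeagre.

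Fix a finite-index subgroup $H$; if $X_H=\emptyset$ there is nothing to prove, so assume $X_H\neq\emptyset$. Since $X_H=\bigcap_{h\in H}\{x\in X:hx=x\}$ is closed, it is compact, and it is $\mathbb{Z}^k$-invariant because $\mathbb{Z}^k$ is abelian, so $(X_H,\mathbb{Z}^k)$ is a dynamical system; applying the hypothesis of the theorem to the subgroup $H$ itself gives $\dim(X_H)<\tfrac m2[\mathbb{Z}^k:H]<\infty$, so $d:=\dim(X_H)\in\mathbb{N}\cup\{0\}$. I would then apply Theorem \ref{takens embedding for Z^k} to $(X_H,\mathbb{Z}^k)$. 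Its dimension hypothesis holds: for any subgroup $A\le\mathbb{Z}^k$ with $[\mathbb{Z}^k:A]\le 2d$ one has $\{x\in X_H:ax=x\text{ for all }a\in A\}=X_H\cap X_A=X_{H+A}$, where $H+A$ is the subgroup generated by $H\cup A$; as $H\subseteq H+A$, it is again of finite index with $[\mathbb{Z}^k:H+A]\le[\mathbb{Z}^k:A]$, so the hypothesis applied to the finite-index subgroup $H+A$ yields
\[
\frac{\dim\bigl(\{x\in X_H:ax=x\text{ for all }a\in A\}\bigr)}{[\mathbb{Z}^k:A]}\le\frac{\dim(X_{H+A})}{[\mathbb{Z}^k:H+A]}<\frac m2 .
\]
Therefore Theorem \ref{takens embedding for Z^k} gives a comeagre set $\mathcal{G}_H^{0}\subseteq C(X_H,[0,1]^m)$ of observables $g$ for which $g_{2d}\colon X_H\to([0,1]^m)^{[0,2d]^k\cap\mathbb{Z}^k}$ is an embedding. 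For $x\in X_H$, the tuple $g_{2d}(x)$ is precisely the restriction of $I_g(x)$ to the coordinates indexed by $[0,2d]^k\cap\mathbb{Z}^k$ (here one uses $\mathbb{Z}^k$-invariance of $X_H$, so that each $ix$ lies in $X_H$); consequently $g_{2d}$ injective implies $I_g|_{X_H}$ injective.

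It remains to transfer this genericity from $C(X_H,[0,1]^m)$ to $C(X,[0,1]^m)$. The restriction map $\rho_H\colon C(X,[0,1]^m)\to C(X_H,[0,1]^m)$, $f\mapsto f|_{X_H}$, is continuous (it does not increase the supremum norm) and open: Lemma \ref{continuous extension} shows $\rho_H\bigl(B(f,\epsilon)\bigr)=B(f|_{X_H},\epsilon)$ for all $f$ and all $\epsilon>0$, since any $g\in C(X_H,[0,1]^m)$ with $\|g-f|_{X_H}\|_\infty<\epsilon$ extends to some $h\in C(X,[0,1]^m)$ with $h|_{X_H}=g$ and $\|h-f\|_\infty<\epsilon$. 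A continuous open map pulls comeagre sets back to comeagre sets, so $\rho_H^{-1}(\mathcal{G}_H^{0})$ is comeagre; since $\rho_H^{-1}(\mathcal{G}_H^{0})\subseteq\mathcal{G}_H$ by the previous paragraph, $\mathcal{G}_H$ is comeagre, which completes the outline.

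The substantive content is imported wholesale from Theorem \ref{takens embedding for Z^k}; the new ingredients are only the stratification of $\mathcal{P}$ by exact stabiliser, the index inequality $[\mathbb{Z}^k:H+A]\le[\mathbb{Z}^k:A]$ that feeds the dimension hypothesis into each stratum, and the soft observation that the restriction map is open so that genericity descends. Should one instead want a self-contained proof not invoking Theorem \ref{takens embedding for Z^k}, I expect the main obstacle to be redoing — for points of finite orbit only — the case analysis of Theorems \ref{takens embedding for Z} and \ref{takens embedding for Z^k} (disjoint versus intersecting orbits, further refined by the exact stabiliser subgroup of each point), together with the linear-algebra genericity provided by Lemmas \ref{affinely independent and linear independent} and \ref{affinely independent for matrix}.
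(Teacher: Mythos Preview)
The paper states Theorem \ref{period immersion} explicitly without proof, so there is no argument of the authors to compare against. Your proof is correct and is the natural route given the tools developed in the paper: the stratification by finite-index subgroups together with the observation that any two points of $\mathcal{P}$ lie in a common $X_{H_1\cap H_2}$ reduces the problem to showing each $\mathcal{G}_H$ is comeagre; the verification of the dimension hypothesis of Theorem \ref{takens embedding for Z^k} on $(X_H,\mathbb{Z}^k)$ via $X_H\cap X_A=X_{H+A}$ and the index inequality $[\mathbb{Z}^k:H+A]\le[\mathbb{Z}^k:A]$ is clean; and the transfer of genericity through the restriction map $\rho_H$ is justified, since Lemma \ref{continuous extension} indeed makes $\rho_H$ open and a continuous open map pulls back dense open sets to dense open sets.
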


\appendix
\section{The Lindenstrauss--Tsukamoto Conjecture\\ holds generically}
Let $Q=[0,1]^{\mathbb{N}}$ be the Hilbert cube and consider the infinite product
$$Q^{\mathbb{Z}}=\cdots\times Q\times Q\times Q\times\cdots.$$
The space $Q^{\mathbb{Z}}$ is metrizable in the product topology and by Tychonoff's theorem is compact.
We denote by $d$ a metric on $Q^{\mathbb{Z}}$ inducing the product topology.
Define the shift $\sigma$ on $Q^{\mathbb{Z}}$ by
$$\sigma((x_{n})_{n\in\mathbb{Z}})=(x_{n+1})_{n\in\mathbb{Z}}, \;\;\; \text{ where } x_{n}\in Q.$$

Every compact metric space is homeomorphic to a subspace
of the Hilbert cube $Q$ \cite[Theorem 4.14]{kechris2012classical}.
For a dynamical system $(X,T)$, let $f: X\to Q$ be a topological embedding of $X$ into $Q$.
We define $I_{f}: X\to Q^{\mathbb{Z}}$ by
$$I_{f}(x)=(f(T^{n}x))_{n\in\mathbb{Z}}.$$
One can readily check that $(X, T)$ embeds equivariantly into $(Q^{\mathbb{Z}},\sigma)$ via $I_{f}$.
So $(I_{f}(X), \sigma)$ is isomorphic to $(X, T)$ via $I_{f}$ and
hence we may regard $(X, T)$ as a subsystem of $(Q^{\mathbb{Z}},\sigma)$.

Let
$$S=\{X\subset Q^{\mathbb{Z}}: X \text{ is closed, non-empty and }\sigma\text{-invariant}\}$$
be the space of all subsystems of $(Q^{\mathbb{Z}},\sigma)$.
This space is compact in the Hausdorff metric, which we will denote by $D_{H}$.
We associate each $X\in S$ with the dynamical system $(X,\sigma|X)$,
making $S$ into a parametrization of dynamical systems.

Let $K$ be the Cantor set and
$\text{Homeo}(K)$ the collection of all homeomorphisms from $K$ to itself.
Kechris and Rosendal \cite{kechris2007turbulence} found $\psi\in\text{Homeo}(K)$
such that its isomorphism class
$\{\phi\in\text{Homeo}(K):(K,\phi)\cong(K,\psi)\}$
is comeagre in $\text{Homeo}(K)$.
We call such a system the Kechris--Rosendal system.
Soon afterwards, Akin, Glasner and Weiss described this system explicitly in \cite{akin2008generically}.
Hochman proved
\begin{Theorem}[{\cite[Corollary 3.6]{hochman2008genericity}}]\label{genericity}
The Kechris--Rosendal system is generic in $S$.
\end{Theorem}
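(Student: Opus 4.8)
The statement is \cite[Corollary 3.6]{hochman2008genericity}; here is the route one would take. Since $(S,D_{H})$ is compact metrizable, hence Polish, the Baire category theorem is available, and the plan is to exhibit
\[
C=\{X\in S:(X,\sigma|X)\ \text{is topologically conjugate to the Kechris--Rosendal system}\ (K,\psi)\}
\]
as a dense $G_{\delta}$ subset of $S$.

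First I would prove density of $C$ in $S$. The starting point is that zero--dimensional subsystems are dense in $S$: given $X\in S$ and $\varepsilon>0$, choose a finite set $A\subset Q$ together with a nearest--point quantization $q\colon Q\to A$ that is sufficiently fine on the finitely many coordinates of $Q^{\mathbb{Z}}$ that affect $D_{H}$ up to $\varepsilon$; then $q^{\mathbb{Z}}\colon Q^{\mathbb{Z}}\to A^{\mathbb{Z}}$ is continuous and shift--equivariant, so $q^{\mathbb{Z}}(X)\subset A^{\mathbb{Z}}$ is a zero--dimensional subsystem with $D_{H}\bigl(X,q^{\mathbb{Z}}(X)\bigr)<\varepsilon$. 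One then perturbs $q^{\mathbb{Z}}(X)$ within its $D_{H}$-neighbourhood to a Cantor subsystem conjugate to $(K,\psi)$, adjoining in a controlled way the finite periodic configurations demanded by the extension property of the Fra\"{\i}ss\'e limit; that this can always be done is the density half of the argument in \cite{hochman2008genericity}.

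Next I would show that $C$ is $G_{\delta}$. Here one invokes the description of $(K,\psi)$ from \cite{kechris2007turbulence,akin2008generically} as the Fra\"{\i}ss\'e limit of the class of finite $\mathbb{Z}$-systems (finite sets equipped with a bijection): up to conjugacy it is the unique Cantor system which is universal for finite $\mathbb{Z}$-systems and has the extension property that, for finite $\mathbb{Z}$-systems $P\subseteq P'$, every embedding of $P$ into it extends, up to an arbitrarily small error, to an embedding of $P'$. Phrased at a finite scale $1/n$ and for a finite list of finite $\mathbb{Z}$-systems of bounded size, each such requirement becomes an \emph{open} condition on $X\in(S,D_{H})$, since it only asks for the existence of finitely many points of $X$ realizing a prescribed finite periodic pattern up to error $1/n$ together with the relevant approximate extensions. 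Intersecting these countably many open conditions with the standard $G_{\delta}$ conditions ``$X$ is perfect'' and ``$X$ is totally disconnected'' in the hyperspace $\mathcal{K}(Q^{\mathbb{Z}})$ of nonempty compact subsets of $Q^{\mathbb{Z}}$ yields a $G_{\delta}$ set which, by the uniqueness part of Fra\"{\i}ss\'e theory, coincides with $C$.

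The main obstacle is precisely this last point. One must check not only that the approximate extension/homogeneity conditions are genuinely open in $(S,D_{H})$, but, more importantly, that their conjunction with perfectness and total disconnectedness is not merely contained in but \emph{equal} to $C$ --- that is, that approximate ultrahomogeneity at every finite scale forces an exact topological conjugacy with the Fra\"{\i}ss\'e limit via a back--and--forth argument --- together with making the density step above fully rigorous. This is the technical core of \cite{hochman2008genericity}, and it is where Kechris--Rosendal's genericity argument inside $\text{Homeo}(K)$ is transported to the hyperspace $S$ of subsystems of $Q^{\mathbb{Z}}$.
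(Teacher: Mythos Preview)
The paper does not prove this theorem at all: it is stated with the citation \cite[Corollary 3.6]{hochman2008genericity} and then immediately used as a black box in the argument that follows (showing the Kechris--Rosendal system is aperiodic and hence embeds in the $1$-cubical shift). There is no ``paper's own proof'' to compare your proposal against; the correct reproduction of the paper's treatment is simply to quote the result from Hochman.

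Your sketch is thus not so much wrong as misplaced: you have outlined a route to Hochman's theorem itself rather than what the present paper does with it. As an outline of Hochman's argument it is plausible in broad strokes --- density of zero-dimensional subsystems in $S$, the Fra\"{\i}ss\'e/extension-property characterization of $(K,\psi)$ from \cite{kechris2007turbulence,akin2008generically}, and a back-and-forth to upgrade approximate homogeneity to conjugacy --- but several steps you flag as ``one then perturbs'' or ``one must check'' are exactly the substance of \cite{hochman2008genericity}, and your sketch does not supply them. In particular, the claim that the approximate extension conditions are open in $(S,D_H)$ and that their countable intersection coincides with $C$ is the heart of the matter, and you correctly identify it as the main obstacle without resolving it.
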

We now claim that the Kechris--Rosendal system, denoted by $(X, \sigma)$ in $S$, is aperiodic.
Fix $Y\in S$, where $(Y, \sigma)$ is aperiodic.
Suppose that $(X,\sigma)$ has a periodic point $x$ of period $k$.
By the density of the Kechris--Rosendal systems in $S$,
we can find $(X_{n}, \sigma)$ $(n\geq 1)$ which are isomorphic to $(X,\sigma)$
such that
$X_{n}\to Y$ w.r.t. $D_{H}$.
Let $x_{n}\in X_{n}$ be of period $k$ and $y$ an accumulation point of the sequence $\{x_{n}\}_{n}$.
Clearly, $y$ is also a periodic point of period less than or equal to $k$.
Without loss of generality, assume $x_{n}\to y$.
By a simple calculation, we get that
\begin{align*}
\begin{split}
d(y, Y)\leq d(y, X_{n})+D_{H}(X_{n}, Y)\leq & d(y,x_{n})+D_{H}(X_{n}, Y)\to 0, \text{ as } n\to \infty.
\end{split}
\end{align*}
So $y\in Y$, a contradiction.
Therefore the Kechris--Rosendal system is aperiodic.
It is well known that the Cantor set is zero dimensional.
By the classic theorem due to Jaworski \cite{J74}, we know that the Kechris--Rosendal system
can be embedded into the $1$-cubical shift (i.e., Conjecture \ref{conjectureLinTsu} holds for the Kechris--Rosendal system).
Thus, by Theorem \ref{genericity} we get that Conjecture \ref{conjectureLinTsu} holds generically.

\begin{Remark}
We say that a dynamical system $(X,T)$ has the \textbf{marker property} if for every natural number $N$ there exists an open set $U\subset X$ satisfying that $U\cap T^{-n}U=\emptyset$ ($0<|n|<N$) and $X=\cup_{n\in\mathbb{N}}T^{n}U$. This property obviously implies the aperiodicity of $(X,T)$. Since the Kechris--Rosendal system is aperiodic zero-dimensional,
it has the marker property by \cite[Lemma 3.3]{GutTsu12} (or \cite[Theorem 6.1]{Gut12a} for a stronger result).
Thus by Theorem \ref{genericity} the marker property holds generically. 
\end{Remark}
\bibliographystyle{alpha}
\bibliography{universal_bib}

\end{document}